\newtheorem{theorem}{Theorem}
\newtheorem{lemma}[theorem]{Lemma}
\newtheorem{corollary}[theorem]{Corollary}
\newtheorem{proposition}[theorem]{Proposition}
\theoremstyle{remark} \newtheorem*{remark}{Remark}
\numberwithin{theorem}{section}
\numberwithin{equation}{section}
\newcommand{\rk}{{\text {\rm rk}}}
\newcommand{\Gal}{{\text {\rm Gal}}}
\newcommand{\ord}{{\text {\rm ord}}}
\newcommand{\Disc}{{\text {\rm Disc}}}
\newcommand{\textmod}{{\text {\rm mod}}}
\newcommand{\Q}{\mathbb{Q}}
\newcommand{\Z}{\mathbb{Z}}
\newcommand{\C}{\mathbb{C}}
\newcommand{\calO}{\mathcal{O}}
\title{Rank growth of elliptic curves in nonabelian extensions}
\author{Robert J. Lemke Oliver}
\address{Department of Mathematics, Tufts University, 503 Boston Ave, Medford, MA 02155}
\email{robert.lemke\_oliver@tufts.edu}
\author{Frank Thorne}
\address{Department of Mathematics, University of South Carolina, 1523 Greene St, Columbia, SC 29201}
\email{thorne@math.sc.edu}
\begin{document}

\begin{abstract}
Given an elliptic curve $E/\mathbb{Q}$, it is a conjecture of Goldfeld that asymptotically half of its quadratic twists will have rank zero and half will have rank one.  Nevertheless, higher rank twists do occur: subject to the parity conjecture, Gouv\^ea and Mazur constructed $X^{1/2-\epsilon}$ twists by discriminants up to $X$ with rank at least two.  For any $d\geq 3$, we build on their work to consider twists by degree $d$ $S_d$-extensions of $\mathbb{Q}$ with discriminant up to $X$. We prove that there are at least $X^{c_d-\epsilon}$ such twists with {positive rank},
where $c_d$ is a positive constant that tends to $1/4$ as $d\to\infty$. {Moreover, subject to a suitable parity conjecture, we obtain the same result for twists with rank at least two.}
\end{abstract}

\maketitle

\section{Introduction and statement of results}

Let $E/\mathbb{Q}$ be an elliptic curve and let $E_D/\mathbb{Q}$ be its twist by the field $\mathbb{Q}(\sqrt{D})$.   Goldfeld \cite{Goldfeld1979} has conjectured that as $D$ ranges over fundamental discriminants, asymptotically 50\% of the twists $E_D/\mathbb{Q}$ will have rank zero and 50\% will have rank one.  Following the work of Gross and Zagier \cite{GrossZagier1986} and Kolyvagin \cite{Kolyvagin1988} on the Birch and Swinnerton-Dyer conjecture in the late 1980's, it became of critical importance to demonstrate the existence of a twist, satisfying some additional splitting conditions, with \emph{analytic} rank one.  This was first achieved independently by Bump, Friedberg, and Hoffstein \cite{BumpFriedbergHoffstein1990} and Murty and Murty \cite{MurtyMurty1991}.  Together, these results imply that if the analytic rank of an elliptic curve $E/\mathbb{Q}$ is at most one, then its algebraic rank is equal to its analytic rank.

In the wake of these results, it became natural to search for twists of rank two or greater.  By employing an explicit construction, the squarefree sieve, and the then recently proven cases of the Birch and Swinnerton-Dyer conjecture, Gouv\^ea and Mazur \cite{GouveaMazur1991} were able to produce $\gg X^{1/2-\epsilon}$ discriminants $D$ with $|D|\leq X$ for which the analytic rank of $E_D/\mathbb{Q}$ is at least two; under the parity conjecture, these twists also have algebraic rank at least two.  Unconditional results on twists with algebraic rank at least two were estbalished by Stewart and Top \cite{StewartTop1995}, though with a worse exponent.

Motivated by the program of Mazur and Rubin on Diophantine stability (see, e.g., \cite{MazurRubin2015}), we may cast the above results as being about the growth of the rational points $E(K)$ relative to $E(\mathbb{Q})$ in quadratic extensions $K/\mathbb{Q}$.  In this work, we are interested in the analogous problem when $K$ is a degree $d$ $S_d$-extension of $\mathbb{Q}$. Let 
\[
\mathcal{F}_d(X) := \{ K/\mathbb{Q} : [K:\mathbb{Q}] = d, \mathrm{Gal}(\widetilde{K}/\mathbb{Q}) \simeq S_d, |\mathrm{Disc}(K)| \leq X\}
\]
where $\mathrm{Disc}(K)$ denotes the absolute discriminant of the extension $K/\mathbb{Q}$ and $\widetilde{K}$ denotes its Galois closure. 
Our main theorem is the following analogue of Gouv\^ea and Mazur's work:

\begin{theorem}\label{thm:general}
Let $E/\mathbb{Q}$ be an elliptic curve and let $d \geq 2$.  There is a constant $c_d>0$ such that for each $\varepsilon = \pm 1$, the number of fields $K \in \mathcal{F}_d(X)$ for which $\mathrm{rk}(E(K))> \mathrm{rk}(E(\mathbb{Q}))$ and 
the root number
$w(E,\rho_K) = \varepsilon$ is $\gg X^{c_d-\epsilon}.$

We may take $c_d = 1/d$ for $d\leq 5$, $c_6 = 1/5$, $c_7=c_8=1/6$, and
\[
c_d = \frac{1}{4} - \frac{d^2+4d-2}{2d^2(d-1)}
\]
in general.  In particular, we may take $c_d > 0.16$ always, and $c_d > 1/4 - \epsilon$ as $d\to\infty$.
\end{theorem}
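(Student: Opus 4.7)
The plan is to follow and generalize the Gouv\^ea--Mazur construction. Writing $E\colon y^2 = f(x)$, I would engineer, for each tuple of parameters $\mathbf{s}$ in a well-chosen parametric family, a pair $(K_{\mathbf{s}}, P_{\mathbf{s}})$ consisting of a degree-$d$ field $K_{\mathbf{s}}/\mathbb{Q}$ and a non-torsion point $P_{\mathbf{s}} \in E(K_{\mathbf{s}}) \setminus E(\mathbb{Q})$. The mechanism is to search for polynomials $h(t;\mathbf{s}) \in \mathbb{Q}[t]$ of degree $d$ and $X(t;\mathbf{s}), Y(t;\mathbf{s}) \in \mathbb{Q}[t]$ of degree less than $d$ satisfying the modular identity
\[
Y(t;\mathbf{s})^2 \equiv f(X(t;\mathbf{s})) \pmod{h(t;\mathbf{s})}.
\]
For each valid tuple $\mathbf{s}$, setting $K_{\mathbf{s}} = \mathbb{Q}[t]/(h(t;\mathbf{s}))$ and $P_{\mathbf{s}} = (X(\bar t; \mathbf{s}), Y(\bar t; \mathbf{s}))$ yields the pair. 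Matching coefficients modulo $h$ gives $d$ polynomial constraints on the coefficients of $h, X, Y$, and one tailors the family so as to produce an infinite supply of valid tuples in a shape amenable to counting.

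Next I would show that for $\mathbf{s}$ in a density-one subset of the family, $h(t;\mathbf{s})$ is irreducible with $\Gal(\widetilde{K_{\mathbf{s}}}/\mathbb{Q}) \simeq S_d$, via a quantitative Hilbert irreducibility theorem. A generic-position argument ensures that $P_{\mathbf{s}}$ is non-torsion and lies outside $E(\mathbb{Q})$, so $\rk E(K_{\mathbf{s}}) > \rk E(\mathbb{Q})$. I would then count the number of distinct $K_{\mathbf{s}} \in \mathcal{F}_d(X)$ arising from the family: this requires bounding $|\Disc(K_{\mathbf{s}})|$ in terms of the parameters (via the polynomial discriminant of $h$, up to an index correction) and counting lattice points in a suitably weighted box in parameter space. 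A squarefree sieve in the style of Gouv\^ea--Mazur converts the count of parameter tuples into a count of distinct number fields and controls the multiplicity with which each field arises.

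Third, to prescribe the root number $w(E, \rho_{K_{\mathbf{s}}}) = \varepsilon$, I would impose finitely many local congruence conditions on $\mathbf{s}$ at auxiliary primes. Since the root number is a product of local epsilon factors, and each local factor can be fixed by a congruence at the cost of only a positive-density factor, both parities are achieved at the same exponent $c_d$. For the conditional strengthening announced in the abstract, note that $\rho_K$ is the irreducible standard representation of $S_d$, of dimension $d-1$, so the rank growth equals $(d-1)m$, where $m$ is the multiplicity of $\rho_K$ in $E(\widetilde{K_{\mathbf{s}}}) \otimes \mathbb{Q}$. Taking $\varepsilon = +1$ and invoking the parity conjecture for the twisted $L$-function $L(s, E, \rho_K)$ forces $m$ to be even, so the construction's $m \geq 1$ upgrades to $m \geq 2$.

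The main obstacle is the first step: engineering the parametric family so that the count of distinct fields matches the claimed exponent $c_d$. A straightforward family, such as varying only a single coefficient of $h$ with the others fixed, yields exponents like $1/(d-1)$, substantially smaller than $c_d$. The transition $c_d = 1/d$ for $d \leq 5$, then $c_6 = 1/5$, $c_7 = c_8 = 1/6$, and finally $c_d \to 1/4$ as $d \to \infty$ suggests that several coefficients of $h$ must be allowed to vary with carefully balanced weights, chosen so that the discriminant constraint is saturated and the parameter count is maximized. A further subtlety is verifying that Hilbert's irreducibility theorem still provides $S_d$ Galois group for a positive proportion of tuples inside these restrictive subfamilies, since the defining constraints are highly non-generic.
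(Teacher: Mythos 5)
Your proposal captures the right circle of ideas for small degrees, and the skeleton -- parametric family of degree-$d$ polynomials whose roots $\alpha$ come equipped with a point $(\alpha, g(\alpha)) \in E$, Hilbert irreducibility, control of multiplicities, local conditions for the root number -- is indeed the paper's strategy. But your sketch would not achieve exponents tending to $1/4$; it would plateau and then decay, and the gap is precisely the one you flag at the end without resolving.

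The crucial missing ingredient is the passage from the squarefree sieve to a geometry-of-numbers multiplicity bound. The paper's argument splits in two. For $d \leq 8$ it uses a single rational parameter $t = u/v$ in an explicit polynomial $P_f(x,t)$ (for example $f(x)-(x+t)^2$ when $d=3$, and $t^2 x^d - f(x)$ or $x^{d-3}f(x)-t^2$ otherwise), applies Greaves's squarefree sieve to a sextic (or quartic) form occurring in $\Disc(P_f)$, and distinguishes fields by the squarefree part of the discriminant; this yields $c_d = (\lceil d/2 \rceil + 2)^{-1}$ for $d \geq 5$, which \emph{decreases} in $d$. For $d \geq 9$ the paper abandons the squarefree sieve entirely and adapts Ellenberg--Venkatesh: it sets $H = F^2 - fG^2$ with $F, G$ polynomials whose roughly $d/2$ coefficients are completely free integer parameters in a skew box, and then bounds the multiplicity $M_K(Y)$ with which a given field $K$ is produced via successive minima of $\mathcal{O}_K$ (their Proposition on $M_K(Y) \ll \max(Y^d \Disc(K)^{-1/2}, Y^{d/2})$, which uses the sharp bound $\lambda_{d-1} \ll \Disc(K)^{1/d}$). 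Combining this with Schmidt's upper bound on $\#\mathcal{F}_d(X)$ eliminates the contribution from fields of small discriminant and yields $c_d \to 1/4$. Your proposal mentions only the squarefree sieve and never this multiplicity bound, and as a result your claimed asymptotics for $c_d$ are not reachable by the method you describe.

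There is also a structural issue with the parametrization $Y^2 \equiv f(X) \pmod h$ in generic form. You correctly worry that Hilbert irreducibility over highly constrained parameter spaces is delicate, but the paper sidesteps the constraint entirely: writing $H = F^2 - fG^2$ with \emph{free} coefficients makes the family genuinely affine (the $a_i, b_j$ are indeterminates), so one only needs to exhibit the single-parameter specializations $F = t, G = x^{(d-3)/2}$ (and its mirror) to verify $\Gal = S_d$ over the function field, after which Hilbert irreducibility is standard. Verifying $S_d$ over $\mathbb{Q}(t)$ in the first place requires some real work -- in the paper this is done by combining Newton polygons at an auxiliary prime $p_2$ (to get long cycles) with a squarefree-discriminant argument at a second auxiliary prime (to get a transposition) and a group-theoretic lemma -- and your sketch currently treats this as automatic, which it is not. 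Finally, for the root number you should be a bit more careful than ``local congruence at auxiliary primes'': the relevant content (Dokchitser) is that $w(E,\rho_K)$ is determined by $K \otimes \mathbb{Q}_p$ at $p \mid N_E$ together with $\mathrm{sgn}(\Disc K)$, so one pins the behavior at bad primes and then flips the archimedean sign, which requires showing the constructed discriminants genuinely achieve both signs in a positive proportion -- a continuity argument the paper carries out via Descartes's rule of signs.
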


Here the {root number} $w(E,\rho_K) = \frac{w(E_K)}{w(E)}$ is related to the analytic ranks of $E/\mathbb{Q}$ and $E/K$ as follows.
Let $L(s, E)$ and $L(s, E_K)$ be the Hasse-Weil $L$-functions associated to $E/\mathbb{Q}$ and its base change to $K$.
Under the Birch and Swinnerton-Dyer Conjecture, the ranks $\mathrm{rk}(E(\mathbb{Q}))$ and $\mathrm{rk}(E(K))$ are equal to the analytic ranks of these $L$-functions. 
Therefore, $\mathrm{rk}(E(K) - \mathrm{rk}(E(\Q))$ is conjecturally equal to the order of vanishing of 
$\frac{L(s, E_K)}{L(s, E)}$ 
at the central point $s = 1/2$.

This quotient is an $L$-function in its own right, the {\itshape non-abelian twist} $L(s, E, \rho_K)$ of $E$ by the standard representation $\rho_K$ of $\mathrm{Gal}(\widetilde{K}/\mathbb{Q}) \simeq S_d$.
(See Section \ref{sec:twist-props}.) This $L$-function is conjectured, and is in some cases known, to be analytic and to satisfy a self-dual functional equation sending $s\mapsto 1-s$ with root 
number $w(E, \rho_K)$.  (For example, this holds whenever $L(s,\rho_K)$ satisfies the strong Artin conjecture.)
This root number thus controls 
the parity of 
$\ord_{s = 1/2} \frac{L(s, E_K)}{L(s, E)}$.
Under either the Birch and Swinnerton-Dyer conjecture or the parity conjecture, this is the same as the parity of $\mathrm{rk}(E(K)) - \mathrm{rk}(E(\mathbb{Q}))$, and we obtain the following.

\begin{corollary}
Assuming the parity conjecture, the number of $K \in \mathcal{F}_d(X)$ for which $\mathrm{rk}(E(K)) \geq 2 + \mathrm{rk}(E(\mathbb{Q}))$ is $\gg X^{c_d-\epsilon}$, with $c_d$ as in
Theorem \ref{thm:general}.
\end{corollary}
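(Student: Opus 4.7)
The proof is immediate from Theorem~\ref{thm:general} specialized to the sign $\varepsilon = +1$. That theorem supplies $\gg X^{c_d-\epsilon}$ fields $K \in \mathcal{F}_d(X)$ for which simultaneously (i)~$\mathrm{rk}(E(K)) > \mathrm{rk}(E(\mathbb{Q}))$ and (ii)~$w(E,\rho_K) = +1$ hold.

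For any such $K$, condition (ii) together with the (assumed) self-dual functional equation of the nonabelian twist $L(s, E, \rho_K) = L(s, E_K)/L(s, E)$ forces its order of vanishing at $s = 1/2$ to be even. Under the parity conjecture, this order of vanishing has the same parity as $\mathrm{rk}(E(K)) - \mathrm{rk}(E(\mathbb{Q}))$, so by (ii) this difference is an even nonnegative integer. By (i) it is strictly positive, hence at least $2$. This yields the desired lower bound on the number of fields with $\mathrm{rk}(E(K)) \geq 2 + \mathrm{rk}(E(\mathbb{Q}))$.

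The argument has no real technical obstacle, since the corollary is a purely formal consequence of Theorem~\ref{thm:general} combined with the parity conjecture. The only subtlety worth flagging is that invoking parity for $L(s, E, \rho_K)$ requires knowing that this $L$-function admits the expected analytic continuation and self-dual functional equation with sign $w(E, \rho_K)$; this is unconditional whenever $L(s, \rho_K)$ satisfies the strong Artin conjecture, and is the standing hypothesis set up in the paragraphs preceding the corollary. Once those analytic properties are in place, the deduction above goes through verbatim.
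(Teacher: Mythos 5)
Your proof is correct and matches the paper's intended argument: apply Theorem~\ref{thm:general} with $\varepsilon=+1$, then use the parity conjecture to upgrade ``rank strictly increases'' to ``rank increases by at least two.'' The paper gives no explicit proof of the corollary beyond the surrounding discussion, and your write-up makes exactly that reasoning precise. One small remark on the subtlety you flag: if one states the parity conjecture intrinsically --- as $(-1)^{\mathrm{rk}(E(K))-\mathrm{rk}(E(\mathbb{Q}))} = w(E,\rho_K)$, where $w(E,\rho_K)$ is defined as a product of local root numbers in the sense of Dokchitser --- then no appeal to analytic continuation or a functional equation for $L(s,E,\rho_K)$ is needed, since both sides are defined unconditionally. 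Routing through $\mathrm{ord}_{s=1/2}L(s,E,\rho_K)$ as you do is perfectly legitimate but does quietly assume the $L$-function is analytic near the central point; the intrinsic formulation avoids that.
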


Using known progress toward the Birch and Swinnerton-Dyer conjecture, we also obtain the following unconditional result on analytic ranks in the case $d=3$.

\begin{theorem}\label{thm:analytic}
Assume that the elliptic curve $E/\mathbb{Q}$ has at least one odd prime of multiplicative reduction.  Then the number of $K \in \mathcal{F}_3(X)$ for which the analytic rank of $L(s,E,\rho_K)$ is at least $2$, is $\gg X^{1/3-\epsilon}$.
\end{theorem}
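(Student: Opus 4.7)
The plan is to apply Theorem \ref{thm:general} with $d=3$ and root-number sign $\varepsilon = +1$, producing $\gg X^{1/3-\epsilon}$ cubic $S_3$-fields $K \in \mathcal{F}_3(X)$ each satisfying $\rk(E(K)) > \rk(E(\Q))$ and $w(E, \rho_K) = +1$. Since the root number is $+1$, the functional equation forces $\ord_{s=1} L(s, E, \rho_K)$ to be even; hence to prove the analytic rank is at least $2$ it suffices to show, for each such $K$, that $L(1, E, \rho_K) = 0$.

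By the factorization $L(s, E_K) = L(s, E) \cdot L(s, E, \rho_K)$, this vanishing is equivalent to $\ord_{s=1} L(s, E_K) > \ord_{s=1} L(s, E)$, i.e., the analytic rank of $E$ strictly grows upon base change to $K$. The corresponding \emph{algebraic} statement is already provided by Theorem \ref{thm:general}; to convert it to its analytic counterpart we invoke a $p$-converse theorem (``algebraic rank $\geq 1 \Rightarrow$ analytic rank $\geq 1$'') of the type supplied by the $p$-adic Iwasawa main conjecture. Concretely, writing $F = \Q(\sqrt{\Disc(K)})$ for the quadratic resolvent of $K$ and $\psi$ for the anticyclotomic cubic character of $F$ cutting out the cyclic cubic extension $\widetilde{K}/F$, the inductivity of $L$-functions gives $L(s, E, \rho_K) = L(s, E_F \otimes \psi)$. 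A Galois-theoretic check shows that $\rk(E(K)) - \rk(E(\Q))$ equals the $\psi$-isotypic rank of $E(\widetilde{K}) \otimes \Q$ as a $\Gal(\widetilde{K}/F)$-module, so the non-torsion point $P \in E(K) \setminus E(\Q)$ produced by the construction yields a nontrivial $\psi$-isotypic Selmer class over $F$. A $p$-converse theorem over the anticyclotomic tower of $F$ (in the style of Skinner, Wan, Skinner--Zhang, Castella--Hsieh, and others) then converts this class to the desired vanishing $L(1, E_F \otimes \psi) = 0$.

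The hypothesis that $E/\Q$ has multiplicative reduction at an odd prime $p$ places $E$ in the ordinary regime at $p$ and supplies the good local conditions needed to invoke the relevant anticyclotomic main conjecture. The chief obstacle will be verifying the technical hypotheses of the converse theorem uniformly across the family: residual irreducibility of the mod-$p$ representation of $G_F$ must be arranged (automatic for $p$ large enough, by Mazur's theorem), and the case of real quadratic resolvent---arising precisely when $K$ is totally real cubic---falls outside the standard anticyclotomic framework. In that case one must appeal instead to a cyclotomic converse theorem over $K$ (using the known modularity of $E/K$ for cubic $K$), or restrict the construction to cubic fields of complex signature, at the cost of implicit constants but not of the final exponent $1/3-\epsilon$.
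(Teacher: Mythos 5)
Your high-level strategy---produce cubic fields with $\rk(E(K)) > \rk(E(\Q))$ and $w(E,\rho_K) = +1$, then convert the algebraic rank growth into forced vanishing of $L(1/2, E_F \otimes \psi_K)$---matches the paper's shape, and you correctly identify the factorization $L(s,E,\rho_K) = L(s,E_F, \psi_K)$ and the Galois-theoretic translation of rank growth into a $\psi_K$-isotypic Selmer class. However, you have the role of the hypothesis ``$E$ has an odd prime of multiplicative reduction'' entirely wrong, and this masks the real difficulty. The paper does \emph{not} use that hypothesis to place $E$ in the ordinary regime for an Iwasawa main conjecture; it uses it purely to control the root number. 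Here is why the issue is genuine: Dokchitser's formula (Lemma \ref{lem:sign-jacobi}) gives $w(E,\rho_K) = w(E)^{d-1}\,\mathrm{sgn}(\Disc(K))\,(\Disc(K)/N_E)$. Nekov\'a\v{r}'s theorem requires $F$ imaginary quadratic, i.e.\ $\Disc(K) < 0$. Once you demand $\mathrm{sgn}(\Disc(K)) = -1$, the residual factor $(\Disc(K)/N_E)$ must be forced to equal $-w(E)^{d-1}$ to get $w(E,\rho_K)=+1$. Both signs of $(\Disc(K)/N_E)$ can be achieved across the family exactly because there is an odd $p \mid\mid N_E$ at which $-t^3(27t+4) \bmod p$ hits both quadratic residues and non-residues. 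Without such a prime, you cannot decouple the sign of the discriminant from the root number, and your ``restrict to complex cubic fields'' escape hatch may simply produce zero fields with the needed sign.

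Separately, the machinery you invoke is heavier and runs in the wrong logical direction to be called a $p$-converse theorem. What the paper needs (Theorem \ref{thm:cubic-bsd} of Nekov\'a\v{r}) is a Kolyvagin-type implication: $L(1/2, E_F, \psi_K) \neq 0 \Rightarrow \rk(E(\widetilde{K})) = \rk(E(F))$. Its contrapositive---algebraic rank growth forces vanishing---is all that is used. A $p$-converse theorem in the sense of Skinner, Castella--Hsieh, etc.\ goes the other direction (Selmer rank one implies analytic rank one) and typically carries ordinariness, big-image, and ramification hypotheses that your family of $K$'s would not obviously satisfy uniformly. You should also note the minor point that when $E$ has CM by an order in the imaginary quadratic resolvent $F$, Nekov\'a\v{r}'s theorem does not apply; the paper handles this by observing there is at most one such exceptional $F$, and the construction distinguishes fields by the squarefree part of $\Disc(K)$, so at most one square-class is lost. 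You did not address this.
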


A curious feature of Theorem \ref{thm:general} is that the constant $c_d$ approaches $1/4$ from below.  One might therefore hope that there is some easy improvement to Theorem \ref{thm:general} that resolves this quirk.  In fact, the value of $c_d$ presented is not always optimal: the proof of Theorem \ref{thm:general} makes use of the Schmidt bound $\# \mathcal{F}_d(X) \ll X^{(d+2)/4}$, and this has been improved for large values of $d$.  However, the net effect of this is minor, and the following result is not obviously improved by any stricter assumption on $\#\mathcal{F}_d(X)$.

\begin{theorem}\label{thm:field-improvement}
Let $d \geq 7$.  If $\#\mathcal{F}_d(X) \ll X^{\frac{d-3}{4}+\frac{1}{2d}+\epsilon}$, then we may take
\[
c_d = \frac{1}{4} - \frac{1}{2d}.
\]
in Theorem \ref{thm:general}.  In particular, this is unconditional for $d\geq 16052$.
\end{theorem}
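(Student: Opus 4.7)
The plan is to trace the proof of Theorem \ref{thm:general} and isolate where Schmidt's bound $\#\mathcal{F}_d(X) \ll X^{(d+2)/4}$ is invoked, then substitute the hypothesized sharper estimate. The construction underlying Theorem \ref{thm:general} produces a family of polynomials $g_t(x) \in \mathbb{Z}[x]$ of degree $d$, parameterized by $t$ in an interval of length $T$, each defining a field $K_t = \mathbb{Q}[x]/(g_t(x))$ over which $E$ acquires extra rank. The polynomial discriminant $\mathrm{Disc}(g_t)$ is polynomial in $t$ of some fixed degree $A=A(d)$, so $|\mathrm{Disc}(K_t)|$ divides an integer of size at most $T^A$.

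The crux of the argument is passing from parameters to distinct fields. Writing $\mathrm{Disc}(g_t) = m_t^2 q_t$ with $q_t$ squarefree, one has $|\mathrm{Disc}(K_t)| \ll T^A/m_t^2$, so parameters with a large square factor produce fields whose discriminant is much smaller than $T^A$. Splitting dyadically by the size of $m_t$, one bounds the number of parameters yielding fields of discriminant at most $Y$ by $\#\mathcal{F}_d(Y)$ times the maximum multiplicity with which a single field arises in the family (the latter controlled by a Northcott-type count of generators of bounded height). Summing over dyadic ranges and optimizing against the squarefree-sieve loss yields the final count of distinct rank-growth fields, and this is the precise step where Schmidt's bound gets plugged in.

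Substituting instead the hypothesized bound $\#\mathcal{F}_d(Y) \ll Y^{(d-3)/4 + 1/(2d)+\epsilon}$ shrinks the field-counting contribution and shifts the optimal dyadic cutoff. Carrying out the resulting optimization should collapse to the clean exponent $c_d = 1/4 - 1/(2d)$, with the two terms $(d-3)/4$ and $1/(2d)$ matching the sieve and parameter losses exactly. The main obstacle is bookkeeping: one must verify that the polynomial-discriminant degree $A$, the squarefree-sieve exponent, and the Hilbert irreducibility step enforcing the $S_d$-Galois condition all combine with the new input to give precisely this value and no extra loss. Finally, the unconditional range $d\geq 16052$ is obtained by invoking the best known unconditional upper bound on $\#\mathcal{F}_d(X)$ (from the recent line of improvements over Schmidt by Ellenberg--Venkatesh, Couveignes, and others) and checking numerically that the threshold $(d-3)/4 + 1/(2d)$ is met for all such $d$.
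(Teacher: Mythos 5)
Your proposal is built around the wrong construction. You describe a one-parameter family $g_t(x)$ indexed by an integer $t$ in an interval, with multiplicities controlled by a squarefree sieve (Greaves) and a ``Northcott-type count'' — this is the machinery of Section \ref{sec:small-degree}, which feeds Theorem \ref{thm:small-degree}, and that route yields only $c_d = (\lceil d/2\rceil + 2)^{-1} \to 0$ as $d \to\infty$. The exponent $c_d = 1/4 - 1/(2d)$ is only accessible through the multi-parameter Ellenberg--Venkatesh construction of Section \ref{sec:large-degree}: one builds $H = F^2 - fG^2$ with $F, G$ ranging over boxes of polynomial coefficients (roughly $Y^{c}$ choices with $c = \tfrac{d^2}{4} - \tfrac{d}{4} + \tfrac12$), and the multiplicity of a given field $K$ among the resulting degree-$d$ polynomials is bounded by Proposition \ref{prop:shape}, i.e.\ $M_K(Y) \ll \max(Y^d\,\Disc(K)^{-1/2}, Y^{d/2})$, a geometry-of-numbers bound via successive minima of $\calO_K$, not a Northcott count or squarefree sieve. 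Schmidt's (or the hypothesized improved) field-counting bound enters only to discard the low-discriminant fields where $M_K(Y)$ could be as large as $Y^d\,\Disc(K)^{-1/2}$: with $\alpha(d) = \tfrac{d-3}{4} + \tfrac{1}{2d}$, the cutoff $T$ can be pushed up to $Y^{d - \epsilon}$, leaving $M_K(Y) \ll Y^{d/2 + \epsilon}$ for the rest, and the resulting ratio $(c - d/2)/(d^2 - d)$ collapses to exactly $1/4 - 1/(2d)$. There is also no dyadic decomposition in the paper's argument — just a single threshold $T$.

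A one-parameter family cannot produce $\gg X^{1/4 - 1/(2d)}$ distinct fields of discriminant up to $X$, since discriminants in such a family grow like $T^A$ with $A$ fixed, forcing $T \ll X^{1/A}$ parameter values; no amount of improved field counting rescues this. So the gap is not in bookkeeping but in the choice of construction, which is the actual content of the theorem. The final claim about $d \geq 16052$ is correct in spirit (it is derived from \cite[(2.6)]{EV} with a numerical check), but that part was secondary.
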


In fact, while our method in principle might have the ability to produce exponents $c_d$ slightly larger than $1/4$, we presently only see how to do so under rather heavy assumptions.

\begin{theorem}\label{thm:overkill}
Assume either that the $L$-functions $L(s,E_K)$ for $K \in \mathcal{F}_d(X)$ are automorphic and satisfy the generalized Riemann hypothesis and the Birch and Swinnerton-Dyer conjecture, or that the bound $\#\mathrm{Cl}(K(E[2]))[2] \ll D_K^\epsilon$ holds for all $K \in \mathcal{F}_d(X)$ and all $\epsilon>0$.  Then Theorem \ref{thm:general} holds with
\[
c_d = \frac{1}{4} + \frac{1}{2(d^2-d)}.
\]
\end{theorem}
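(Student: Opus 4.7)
The plan is to revisit the construction underlying Theorem~\ref{thm:general} and to sharpen its counting step under the strong hypotheses of Theorem~\ref{thm:overkill}. That construction produces pairs $(K,P)$ with $K\in\mathcal{F}_d(X)$ and $P\in E(K)\setminus E(\mathbb{Q})$ forcing $\rk(E(K))>\rk(E(\mathbb{Q}))$, parametrized by rational points on an auxiliary variety; denote the total number of such pairs by $N(X)$. A pigeonhole argument then bounds the number of distinct fields with rank growth from below by $N(X)/M(X)$, where
\[
M(X) := \max_{K\in\mathcal{F}_d(X)} \#\{P\in E(K) : (K,P)\text{ arises from the construction}\}.
\]
In the proofs of Theorems~\ref{thm:general} and~\ref{thm:field-improvement}, the bound on $M(X)$ is the limiting factor: it is controlled by a crude height count on $E(K)$ that contributes a loss of a positive power of $X$, and this is what prevents $c_d$ from reaching the value implicit in $N(X)$.

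The two hypotheses of Theorem~\ref{thm:overkill} each allow us to replace this by $M(X)\ll_\epsilon X^\epsilon$. Under the first (automorphy, GRH, and BSD for $L(s,E_K)$), the rank $\rk(E(K))$ equals the order of vanishing of $L(s,E_K)$ at $s=1/2$, and the standard explicit-formula estimate under GRH yields $\rk(E(K))\ll_E \log D_K$. Under the second, a standard $2$-descent gives
\[
\rk(E(K)) \leq \dim_{\mathbb{F}_2}\mathrm{Sel}_2(E/K) \ll_E \dim_{\mathbb{F}_2}\mathrm{Cl}(K(E[2]))[2] + \omega(D_K) \ll_\epsilon D_K^\epsilon.
\]
In either case, combining the rank bound with Silverman's estimate for the number of lattice points of bounded canonical height in $E(K)$ yields $M(X)\ll_\epsilon X^\epsilon$. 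One then checks that the same construction produces $N(X)\gg X^{1/4+1/(2d(d-1))}$ pairs, so that $N(X)/M(X)\gg X^{c_d-\epsilon}$ with the claimed $c_d=1/4+1/(2d(d-1))$; the root-number splitting $w(E,\rho_K)=\pm 1$ is then arranged by the same sieve as in the proof of Theorem~\ref{thm:general}.

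The main obstacle is the passage from the rank bound $\rk(E(K))\ll_\epsilon D_K^\epsilon$ to the multiplicity bound $M(X)\ll X^\epsilon$. One must first confine the points $P$ that arise from the construction to a region of $E(K)$ of canonical height $O(\log X)$, and then apply a uniform lattice-point estimate on the Mordell--Weil group of rank $\ll D_K^\epsilon$. A secondary obstacle on the class-group side is justifying the $\omega(D_K)$ term in the Selmer bound: this requires careful tracking of the ramification contribution from $2$ and from the primes of bad reduction of $E$ in the descent over $K(E[2])$, though neither input is subtle once the class-group hypothesis is granted.
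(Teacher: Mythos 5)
Your high-level framework --- produce $N(X)$ pairs $(K,P)$, bound the multiplicity $M(X)$ with which a single field occurs, and pigeonhole --- is exactly the paper's strategy. But the quantitative details of the multiplicity bound, which you correctly identify as the crux, contain errors and a missing idea that together sink the argument.

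First, the GRH rank bound is misstated. You claim that the explicit formula under GRH gives $\rk(E(K)) \ll \log D_K$. That is in fact the \emph{unconditional} (convexity/explicit-formula) bound, and it is not strong enough: what is needed, and what GRH actually delivers (via \cite[Prop.\ 5.21]{IwaniecKowalski}), is the improvement
\[
\ord_{s=1/2} L(s,E,\rho_K) \ll \frac{\log D_K}{\log\log D_K},
\]
which is $o(\log D_K)$. Since the final exponent is of the form $e^{c \cdot \rk(E(K))}$, the difference between $O(\log D_K)$ and $o(\log D_K)$ is exactly the difference between a positive power of $X$ and $X^{\epsilon}$. Second, on the class-group side your chain of inequalities is essentially correct through the Selmer step, but the conclusion $\rk(E(K)) \ll_\epsilon D_K^\epsilon$ is a useless weakening. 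Since $\dim_{\mathbb{F}_2}\mathrm{Cl}(K(E[2]))[2] = \log_2 \#\mathrm{Cl}(K(E[2]))[2] \ll \epsilon \log D_K$ and $\omega(D_K) = o(\log D_K)$, what you actually get --- and what you need --- is $\rk(E(K)) \ll \epsilon\log D_K$ for every $\epsilon > 0$, i.e.\ $o(\log D_K)$.

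Third, and most importantly, even granting the correct rank bound $\rk(E(K)) = o(\log D_K)$, the ``uniform lattice-point estimate'' you invoke is not sufficient. A generic lattice count of points of canonical height $\le H$ in a rank-$r$ Mordell--Weil group gives something like $H^{r/2}$ (and even this assumes a lower bound on the minimal height, which is conjectural). With $H \ll \log Y$ and $r \ll \log Y / \log\log Y$ this is $\exp(O(\log Y))$, a genuine positive power of $X$. The paper gets around this by exploiting a structural feature you do not mention: the constructed points $(\alpha, F(\alpha)/G(\alpha))$ are $S$-integral for a fixed finite set $S$. One may then invoke Helfgott--Venkatesh \cite[Thm.\ 3.8]{HelfgottVenkatesh}, which bounds the number of $S$-integral points of height $\le h$ by $O_{S,f,d}\bigl((1+\log h)^2\, e^{0.28\,\rk(E(K))}\bigr)$ --- polylogarithmic in the height, exponential only in the rank --- so that $\rk(E(K)) = o(\log Y)$ yields $M_{E,K}(Y) \ll Y^\epsilon$. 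Without the integrality observation and the Helfgott--Venkatesh input, the multiplicity bound does not close, and this is the step on which the whole improvement hinges.
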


We now comment on what we expect to be true.  It is a folklore conjecture, strengthened by Bhargava \cite{Bhargava2007}, that there is a positive constant $a_d$ such that $\#\mathcal{F}_d(X) \sim a_d X$.  Based on the minimality philosophy, since fields $K \in \mathcal{F}_d(X)$ admit no nontrivial subfields and the root numbers $w(E,\rho_K)$ assume both signs, it is reasonable to expect that a version of Goldfeld's conjecture should hold.  That is, that the number of $K \in \mathcal{F}_d(X)$ for which $\mathrm{rk}(E(K))=\mathrm{rk}(E(\mathbb{Q}))$ and the number for which $\mathrm{rk}(E(K))=1+\mathrm{rk}(E(\mathbb{Q}))$ should each be asymptotic to $\frac{1}{2}a_d X$.  Furthermore, a na\"ive heuristic based on quantization of Tate-Shafarevich groups and Tate's version of the Birch and Swinnerton-Dyer conjecture over number fields suggests that perhaps the number of $K$ for which $\mathrm{rk}(E(K))=2+\mathrm{rk}(E(\mathbb{Q}))$ should be $X^{3/4+o(1)}$.

Thus, Theorem \ref{thm:general} -- which, to the best of our knowledge, provides the first general bounds as $d\to\infty$ for the number of $K \in \mathcal{F}_d(X)$ for which $\mathrm{rk}(E(K))>\mathrm{rk}(E(\mathbb{Q}))$ -- is presumably very far from the truth.  However, it is only known at present for $d\geq 6$ that $\#\mathcal{F}_d(X) \gg X^{1/2+1/d}$ due to recent work of Bhargava, Shankar, and Wang \cite{BhargavaShankarWang}.  This result is the culmination of a natural line of thought (constructing fields via writing down polynomials), so producing a stronger lower bound for $\#\mathcal{F}_d(X)$ will require a substantial new idea.  In particular, since we are conjecturally accessing in Theorem \ref{thm:general} fields for which the rank increases by at least two, based on the above discussion, it is reasonable to expect that the best possible version of Theorem \ref{thm:general} available with current methods can do no better than $c_d = 1/4 + 1/d$.  
We therefore view Theorem \ref{thm:general} as nearly optimal, though it would surely be desirable to bridge the small gap between our results and this limit.  
It is not clear to us at this time how to do so.


Finally, we discuss briefly other results on the growth of the Mordell--Weil group in non-quadratic extensions $K/\mathbb{Q}$.  Most notably for our purposes, V. Dokchitser \cite{Dokchitser2005} analyzed the root numbers of $L(s,E_K)$ for general $K$ and obtained many corollaries about analytic ranks.  His work is a crucial ingredient in controlling the root numbers in Theorem \ref{thm:general}.  Quite recently, Fornea \cite{Fornea} has shown that for many curves $E/\mathbb{Q}$, the analytic rank of $E$ increases over a positive proportion of $K \in \mathcal{F}_5(X)$, though his work does not control the algebraic rank nor does it access twists for which the rank increases by two.  In the large rank direction, in earlier work, by a consideration of root numbers, Howe \cite{Howe1997} showed that in Galois $\mathrm{PGL}_2(\mathbf{Z}/p^n\mathbf{Z})$-extensions, the rank increases dramatically if $-N_E$ is a quadratic nonresidue modulo $p$, where $N_E$ denotes the conductor of the curve $E/\mathbb{Q}$.  However, this result is of a somewhat different flavor than Theorem \ref{thm:general}, as Howe is specifically exploiting the fact that such fields admit many nontrivial subfields.  (We recall again that a field $K \in \mathcal{F}_d(X)$ admits no such subfields.)  Lastly, in the complementary direction, Mazur and Rubin \cite{MazurRubin2015} show that for every prime power $\ell^n$, there are infinitely many cyclic degree $\ell^n$ extensions over which the Mordell--Weil group does not grow, and David, Fearnley, and Kisilevsky \cite{DavidFearnleyKisilevsky2007} have formulated conjectures about the frequency with which the rank increases over prime degree cyclic extensions.

\section{Organization of the paper and the strategy of the proof}

We begin by explaining the ideas that go into the proof of Theorem \ref{thm:general}. 

To construct points on $E$ over degree $d$ number fields, we construct points in parametrized families over degree $d$ extensions of certain function fields $\mathbb{Q}(\mathbf{t})$ where 
$\mathbf{t}=(t_1,\dots,t_r)$ for some $r$. For example, for $d = 3$ we find a Weierstrass model $E\colon y^2 = f(x)$ for which $P_f(x, t) := 
f(x) - (x + t)^2$ defines an $S_3$-extension of $\Q(t)$.
By Hilbert irreducibility, most specializations $t = t_0 \in \Q$ define $S_3$-extensions $K/\Q$, over which $E$ visibly gains a point. Lemma \ref{lem:fractional-points} then establishes that these
`new' points usually increase the rank.

\smallskip
After proving some preliminary lemmas in Section \ref{sec:galois-background}, we devote Section \ref{sec:galois} to constructing $S_d$-extensions of $\Q(t)$ along the lines discussed above for $d = 3$.
The strategy is to prove that the Galois groups of specializations contain various cycle types. We first use Newton polygons to exhibit `long' cycles. We then argue that, for a suitable Weierstrass model
of $E$, there exists a prime $p$ and a specialization $P_f(x, t_0)$ such that $p$ divides the discriminant $P_f(x, t)$ and $p^2$ does not. This proves that the Galois group of $P_f$ contains 
a transposition, and (after a bit of group theory) that it is therefore $S_d$.

\smallskip
We thus obtain $S_d$-extensions $K$ over which $E$ gains a point of infinite order. We must then bound the multiplicity with which a given field arises.  We present two ways of doing so.  The first method is via an analysis of the squarefree part of the discriminant of $K$ and is carried out in Section \ref{sec:small-degree}.  This requires the transcendence degree of the function field $\mathbb{Q}(\mathbf{t})$ to be quite small, and so is the more efficient of the two methods only for $d\leq 8$.  

The second method, presented in Section \ref{sec:large-degree}, is based on a slight improvement to a geometry-of-numbers argument due to  Ellenberg and Venkatesh \cite{EV} that was originally used to bound $\#\mathcal{F}_d(X)$ from below. We
adapt their construction to only count fields over which $E$ gains a point. 
This allows the transcendence degree of the field $\mathbb{Q}(\mathbf{t})$ to be large, but with some loss of control over the multiplicities. The added freedom gained by the number of parameters outweighs this small loss once $d\geq 9$.

\smallskip

Finally, it remains to control the root numbers $w(E,\rho_K)$.  We do so using work of V. Dokchitser \cite{Dokchitser2005}.  We review his work, along with other useful properties of the twist, in Section \ref{sec:twist-props}.  The net effect is that to show that both root numbers occur frequently it suffices to show that we construct many fields $K$ and $K^\prime$ that are ``$p$-adically close'' for each $p \mid N_E$ but for which the discriminants $D_K$ and $D_{K^\prime}$ have different signs.  Assembling all of this, we obtain Theorem \ref{thm:general}.  The proof of Theorem \ref{thm:analytic} relies on similar arguments from Section \ref{sec:small-degree} for small degrees, but it requires a slightly different handling of the root number.  This is provided to us by a different lemma of Dokchitser.

\section*{Acknowledgements}

The authors would like to thank Michael Filaseta, Jan Nekov\'a\v{r}, Jeremy Rouse, David Smyth, Stanley Yao Xiao, and David Zureick-Brown for useful insights on this problem.

This work was supported by NSF Grant DMS-1601398 (R.J.L.O.), by a NSA Young Investigator Grant (H98230-16-1-0051, F.T.), and
by a grant from the Simons Foundation (563234, F.T.).

\section{Properties of the twist} \label{sec:twist-props}

Let $E/\mathbb{Q}$ be an elliptic curve and let $K \in \mathcal{F}_d(X)$.  Formally, the non-abelian twist $L(s, E, \rho_K)$ may be defined by the relation
\begin{equation}\label{eq:l_factor}
L(s,E_K) = L(s,E) L(s,E,\rho_K).
\end{equation}
In Dokchitser \cite{Dokchitser2005}, $L(s, E, \rho_K)$ is given a more intrinsic definition that we now briefly recall. Let $\rho_K$ be the standard $d - 1$ dimensional
representation of $\Gal(\widetilde{K}/\Q) \simeq S_d$, which we also regard as a continuous representation of $\Gal(\overline{\Q}/\Q)$. The usual Artin formalism provides a factorization
\begin{equation}\label{eq:artin}
\zeta_K(s) = \zeta(s) L(s, \rho_K)
\end{equation}
of the Dedekind zeta function $\zeta_K(s)$, where $L(s, \rho_K)$ is the Artin $L$-function associated to $\rho_K$. 

Now, let $T_\ell(E)$ be the $\ell$-adic Tate module associated to $E$, and write
\[
H_\ell(E) = \textnormal{Hom}(T_\ell(E) \otimes \Q_\ell, \Q_\ell) \otimes_{\Q_\ell} \C,
\]
which is a $2$-dimensional $G_\Q$-module. Then the $L$-function $L(s, E)$ is defined, as usual,
in terms of the action of $G_\Q$ on $H_\ell(E)$; its twist $L$-function $L(s, E, \rho_K)$ is defined analogously
in terms of the representation on $H_\ell(E) \otimes \rho_K$. The formula \eqref{eq:l_factor} is then the exact analogue 
of \eqref{eq:artin}, and is similarly proved.

We may also regard $L(s,E,\rho_K)$ as the Rankin--Selberg $L$-function $L(s, E \times \rho_K)$.  The analytic properties of Rankin--Selberg products are known when the two $L$-functions are attached to cuspidal automorphic forms; for example, see Cogdell \cite{Cogdell} for a wonderful summary.  The modularity theorem establishes that $L(s,E)$ is attached to a classical modular form, and the strong Artin conjecture asserts that every $L(s,\rho_K)$ is attached to an automorphic form.  Thus, we expect that $L(s,E,\rho_K)$ is always entire, but this is at present wide open in general.

In the special case that $K/\mathbb{Q}$ is an $S_3$ cubic, the strong Artin conjecture is known for $L(s,\rho_K)$, whereby 
the $L$-function $L(s, E, \rho_K)$ is known to be holomorphic.
We may further connect this $L$-function to the Mordell--Weil group, as we now explain. 

Given a field $K \in \mathcal{F}_3(X)$, there is a unique quadratic subfield $F$ of the Galois closure $\widetilde{K}$ known as the quadratic resolvent of $K$.  If $\psi_K$ is the cubic ray class character of $F$ corresponding to the extension $\widetilde{K}/F$, then $L(s,\psi_K) = L(s,\rho_K)$.  Correspondingly, there is an equality of $L$-functions $L(s,E,\rho_K) = L(s,E_F,\psi_K)$.  As in \eqref{eq:l_factor}, it follows that 
$$
L(s,E_{\widetilde{K}}) = L(s,E_F) L(s,E_F,\psi_K) L(s,E_F,\overline{\psi}_K),
$$
where $\overline{\psi}_K$ is the character conjugate to $\psi_K$.  In fact, even though $\psi_K$ and $\overline{\psi}_K$ are distinct characters, their associated $L$-functions are the same.  Similarly, $L(s,E,\psi_K) = L(s,E,\overline{\psi}_K)$ as analytic functions, so we conclude in particular that 
\begin{align*}
\mathrm{ord}_{s=1/2} L(s,E_{\widetilde{K}}) - \mathrm{ord}_{s=1/2} L(s,E_F) 
	&= 2\cdot \mathrm{ord}_{s=1/2} L(s,E_F,\psi_K) \\
	&= 2\cdot \mathrm{ord}_{s=1/2} L(s,E,\rho_K).
\end{align*}
In other words, the analytic rank of $L(s,E,\rho_K)$ controls the growth of the analytic rank of $E$ in the extension $\widetilde{K}/F$.

There is an arithmetic manifestation of this story as well.  Viewing the Mordell--Weil group $E(\widetilde{K}) \otimes \mathbb{C}$ as a finite dimensional Galois representation and decomposing
it into isotypic components, a bit of Galois theory shows that the $\rho_K$-isotypic component $E(\widetilde{K})^{\rho_K}$ satisfies
\begin{align*}
\dim_\mathbb{C} E(\widetilde{K})^{\rho_K}
	&= \mathrm{rk}(E(\widetilde{K})) - \mathrm{rk}(E(F)) \\
	&= 2\cdot (\mathrm{rk}(E(K)) - \mathrm{rk}(E(\mathbb{Q}))).
\end{align*}
The first line follows because $E(F) \otimes \C$ is the direct sum of the remaining isotypic components; the second because,
for each element $\tau \in \Gal(\widetilde{K}/\Q)$ of order two, $\rho(\tau)$ has eigenvalues $1$ and $-1$. In particular, we see that
the growth of the rank of the Mordell--Weil group in the extension $\widetilde{K}/F$ is controlled by its growth in $K/\mathbb{Q}$.

Combining these two perspectives, the Birch and Swinnerton-Dyer conjecture predicts that the analytic rank of $L(s,E,\rho_K)$ controls the multiplicity of $\rho_K$ in the representation $E(\widetilde{K})\otimes \mathbb{C}$, and thereby the growth of the rank.  While this conjecture is certainly still wide open, it is known in the case that the analytic rank is $0$ and the field $F$ is imaginary:

\begin{theorem}[Nekov\'a\v{r} \cite{Nekovar}, Theorem A\textquotesingle]\label{thm:cubic-bsd}
With notation as above, suppose that $F$ is an imaginary quadratic field and that $E$ does not have CM by an order in $F$.  If $L(1/2,E_F,\psi_K) \neq 0$, then $\mathrm{rk}(E(\widetilde{K})) = \mathrm{rk}(E(F))$.  \end{theorem}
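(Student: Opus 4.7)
The plan is to reduce the theorem to an anticyclotomic Euler system argument, in the tradition of Kolyvagin, Bertolini--Darmon, and Howard. Observe first that since $\widetilde{K}/\mathbb{Q}$ is a nonabelian $S_3$-extension, complex conjugation acts on $\mathrm{Gal}(\widetilde{K}/F)$ by inversion, so $\psi_K$ is an anticyclotomic character of the imaginary quadratic field $F$ of order exactly $3$. Consequently $L(s,E_F,\psi_K)$ is self-dual under $s \mapsto 1-s$; the hypothesis that $E$ has no CM by an order in $F$ then guarantees that the $2$-dimensional Galois representation $V_\ell(E)\otimes\psi_K$ is absolutely irreducible for every prime $\ell$ at which $E[\ell]$ has large image.

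Next, I would invoke a Waldspurger / Gross central-value formula to extract arithmetic content from the hypothesis $L(1/2,E_F,\psi_K)\ne 0$. By the modularity of $E$ and Jacquet--Langlands transfer to the definite quaternion algebra $B/\mathbb{Q}$ ramified at $\infty$ together with the primes forcing root number $+1$ in the functional equation, this nonvanishing becomes equivalent to the nonvanishing of a $\psi_K$-twisted period
\[
P_{\psi_K}(\phi_E) \;=\; \sum_{\sigma \in \mathrm{Gal}(\widetilde{K}/F)} \psi_K(\sigma)\, \phi_E(x^\sigma),
\]
where $\phi_E$ is the Jacquet--Langlands transfer of $E$ to an automorphic form on $B^\times$ and $x$ is a CM point on the associated definite Shimura set attached to an order of $F$ whose conductor matches that of $\psi_K$. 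The non-CM hypothesis is precisely what makes $\phi_E$ a nonzero function to begin with.

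The third step is to use this base-level nonvanishing as the input to an anticyclotomic Euler system. Following Bertolini--Darmon (or Howard's refinement), one constructs Kolyvagin-style derived cohomology classes in $H^1(F, T_\ell(E)\otimes\psi_K)$ indexed by primes inert in $F$ satisfying Kolyvagin's congruence conditions, and uses global duality to bound the Bloch--Kato Selmer group $\mathrm{Sel}(F, V_\ell(E)\otimes\psi_K)$. The nontriviality of the base class coming from Step~2 forces this Selmer group to vanish. Combining this with the symmetric bound for $\overline{\psi}_K$ and the isotypic decomposition established in Section~\ref{sec:twist-props}, namely $\mathrm{rk}(E(\widetilde{K})) - \mathrm{rk}(E(F)) = 2\dim_\mathbb{C} E(\widetilde{K})^{\rho_K}$, one concludes that the rank does not grow from $F$ to $\widetilde{K}$.

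The main obstacle is that $\psi_K$ has order exactly $3$, so the Euler system must be run at an auxiliary prime $\ell \ne 3$ at which $\overline{\rho}_{E,\ell}$ has image as large as possible --- ideally containing $\mathrm{SL}_2(\mathbb{F}_\ell)$ --- in order that enough Kolyvagin primes exist and that the derived classes see the full Selmer group. The non-CM assumption combined with Serre's open image theorem guarantees infinitely many such $\ell$, but verifying that some single such $\ell$ is simultaneously compatible with the local duality constraints at every prime dividing $3\, N_E\, \mathrm{disc}(F)$, and with the ramification of $\psi_K$, is the delicate technical heart of the argument.
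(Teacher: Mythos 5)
This statement is not proved in the paper: it is a direct citation of Nekov\'a\v{r}'s Theorem~A\textquotesingle, quoted verbatim and used as a black box. So there is no internal proof against which to compare your sketch. What you have written is a plausible high-level outline of the anticyclotomic Euler system strategy, in the tradition of Bertolini--Darmon and Nekov\'a\v{r}, and the broad architecture (Waldspurger/Gross toric period at a definite quaternion algebra, construction of derived Kolyvagin classes in $H^1(F, T_\ell(E)\otimes\psi_K)$, global duality bounding the Bloch--Kato Selmer group, and then the isotypic decomposition from Section~\ref{sec:twist-props}) is indeed the family of ideas behind the cited result. But an outline with an acknowledged ``delicate technical heart'' left open is not a proof of a theorem of this depth; it would take an entire paper (which it did) to execute.

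One concrete flaw worth flagging: you invoke Serre's open image theorem to supply auxiliary primes $\ell$ with $\mathrm{SL}_2(\mathbb{F}_\ell) \subseteq \mathrm{im}(\overline{\rho}_{E,\ell})$, citing ``the non-CM assumption'' as justification. But the hypothesis of the theorem is only that $E$ does \emph{not} have CM by an order in $F$; it is entirely permitted that $E$ has CM by an order in some \emph{other} imaginary quadratic field $F'\neq F$. In that case Serre's theorem gives nothing --- the image of $\overline{\rho}_{E,\ell}$ lies inside the normalizer of a Cartan for every $\ell$, and there are no large-image primes at all. So your step~3 breaks down for CM curves. Handling the CM-by-$F'$ case requires either a separate argument (in the spirit of Rubin's Euler system for CM elliptic curves) or a different formulation of the large-image input; Nekov\'a\v{r}'s actual hypotheses and argument are more careful here, and this is precisely the kind of detail that the ``outline'' format elides.
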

Here the $L$-function is again normalized so that $s = \frac{1}{2}$ is at the center of the critical strip.

From Theorem \ref{thm:cubic-bsd} and the above discussion, we obtain the following corollary.

\begin{corollary}\label{cor:cubic-ranks}
Let $E/\mathbb{Q}$ be an elliptic curve and let $K \in \mathcal{F}_3(X)$ have negative discriminant.  Suppose that $E$ does not have CM by an order in the quadratic resolvent of $K$.  If $w(E,\rho_K) = +1$ and $\mathrm{rk}(E(K)) \neq \mathrm{rk}(E(\mathbb{Q}))$, then the analytic rank of $L(s,E,\rho_K)$ is at least $2$.
\end{corollary}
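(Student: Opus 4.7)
The plan is to combine the rank-comparison identity derived earlier in the section with Nekov\'a\v{r}'s Theorem \ref{thm:cubic-bsd} and the parity consequence of the root number. First I would note that, since $K/\mathbb{Q}$ is an $S_3$ cubic with quadratic resolvent $F$, the discriminants $\mathrm{Disc}(K)$ and $\mathrm{Disc}(F)$ have the same sign (they differ by a square); hence the hypothesis $\mathrm{Disc}(K) < 0$ is exactly the statement that $F$ is imaginary quadratic. Together with the hypothesis that $E$ has no CM by an order of $F$, this places us in the setting of Theorem \ref{thm:cubic-bsd}.

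Next, the hypothesis $\mathrm{rk}(E(K)) \neq \mathrm{rk}(E(\mathbb{Q}))$ is equivalent to the strict inequality $\mathrm{rk}(E(K)) > \mathrm{rk}(E(\mathbb{Q}))$, because $E(\mathbb{Q}) \subseteq E(K)$. By the identity
\[
\mathrm{rk}(E(\widetilde{K})) - \mathrm{rk}(E(F)) = 2\bigl(\mathrm{rk}(E(K)) - \mathrm{rk}(E(\mathbb{Q}))\bigr)
\]
established earlier in this section, it follows that $\mathrm{rk}(E(\widetilde{K})) > \mathrm{rk}(E(F))$. The contrapositive of Theorem \ref{thm:cubic-bsd} then forces $L(1/2, E_F, \psi_K) = 0$, and by the identification $L(s,E,\rho_K) = L(s,E_F,\psi_K)$ recalled in this section, we conclude $\mathrm{ord}_{s=1/2} L(s, E, \rho_K) \geq 1$.

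Finally, to upgrade this lower bound from $1$ to $2$, I would invoke the self-dual functional equation $s \mapsto 1 - s$ satisfied by $L(s,E,\rho_K)$, together with the hypothesis $w(E,\rho_K) = +1$. This forces the order of vanishing at the central point $s = 1/2$ to be even, and combining with the lower bound just established yields $\mathrm{ord}_{s=1/2} L(s, E, \rho_K) \geq 2$.

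There is no substantial obstacle; the corollary is essentially a direct assembly of Nekov\'a\v{r}'s theorem with the parity condition imposed by the root number. The only point deserving care is the translation of the sign of $\mathrm{Disc}(K)$ into whether the resolvent $F$ is real or imaginary, which is immediate for $S_3$ cubics.
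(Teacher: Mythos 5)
Your proof is correct and follows essentially the same route as the paper: translate the negative discriminant into the imaginarity of the quadratic resolvent $F$, apply Nekov\'a\v{r}'s theorem (Theorem \ref{thm:cubic-bsd}) to conclude $L(1/2,E_F,\psi_K) = 0$, and use the sign $w(E,\rho_K)=+1$ to force the order of vanishing to be even and hence at least two. The only difference is that you spell out the intermediate step via the rank identity $\mathrm{rk}(E(\widetilde{K})) - \mathrm{rk}(E(F)) = 2(\mathrm{rk}(E(K)) - \mathrm{rk}(E(\mathbb{Q})))$ a bit more explicitly than the paper, which simply cites the theorem directly.
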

\begin{proof}
Since $w(E,\rho_K) = +1$, the analytic rank of $L(s,E,\rho_K)$ must be even.  On the other hand, the requirement that $K$ have negative discriminant guarantees that the quadratic resolvent $F$ is an imaginary quadratic field.  Thus, since $\mathrm{rk}(E(K)) \neq \mathrm{rk}(E(\mathbb{Q}))$ and $L(s,E,\rho_K) = L(s,E_F,\psi_K)$, Theorem \ref{thm:cubic-bsd} precludes the possibility that $L(s,E,\rho_K) \neq 0$.  This implies that $L(s,E,\rho_K)$ must have rank at least $2$, as claimed.
\end{proof}

We now recall the work of Dokchitser \cite{Dokchitser2005} on the root numbers $w(E,\rho_K)$.  In many cases (see his Theorem 16, for example), he determined exactly the value of $w(E,\rho_K)$.  We require only the following properties, obtained as a consequence of \cite[Theorem 16]{Dokchitser2005} and its surrounding discussion.

\begin{lemma}[Dokchitser]
If $K \in \mathcal{F}_d(X)$, then there is a factorization
\[
w(E,\rho_K) = w(E)^{d-1} w_\infty(E,\rho_K) \prod_p w_p(E,\rho_K)
\]
such that:
\begin{enumerate}
\item $w_p(E,\rho_K) = 1$ if $E$ has good reduction at $p$; 
\item $w_\infty(E,\rho_K) = \mathrm{sgn}(\mathrm{Disc}(K))$, the sign of the discriminant of $K$; and
\item if $p \mid N_E$, then $w_p(E,\rho_K)$ depends only on $\rho_E\!\mid_{G_{\mathbb{Q}_p}}$ and $\rho_K\!\mid_{G_{\mathbb{Q}_p}}$, where $\rho_E$ is the Galois representation attached to $E$ and $G_{\mathbb{Q}_p} = \mathrm{Gal}(\overline{\mathbb{Q}_p}/\mathbb{Q}_p) \subseteq \mathrm{Gal}(\overline{\mathbb{Q}}/\mathbb{Q})$.
\end{enumerate}
\end{lemma}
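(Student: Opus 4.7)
My plan is to derive this lemma as an extraction of Dokchitser's Theorem~16 in \cite{Dokchitser2005}, which already supplies explicit local formulas for $w(E,\rho_K)$. The substantive work is packaging: choosing a normalization of the local factors so that the three asserted properties fall out.

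The starting point is the general factorization of the root number of the Galois representation $\rho_E\otimes\rho_K$ into local epsilon factors,
\[
w(E,\rho_K)=\prod_v \varepsilon_v(E,\rho_K),
\]
where each $\varepsilon_v$ depends only on the restriction of $\rho_E\otimes\rho_K$ to the decomposition group $G_{\mathbb{Q}_v}$. This locality is built into the Langlands--Deligne formalism for epsilon factors and already establishes property~(3) at every finite place; the hypothesis $p\mid N_E$ in the statement is there only because the factor will be shown to be trivial at good primes after renormalization.

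To obtain the stated form with the prefactor $w(E)^{d-1}$, I would define
\[
w_v(E,\rho_K):=\varepsilon_v(E,\rho_K)/\varepsilon_v(E)^{d-1}
\]
at each place $v$, where $\varepsilon_v(E)$ is the local root number of $E$. Since $\prod_v \varepsilon_v(E)=w(E)$, this rearranges into the asserted factorization. Property~(1) then reduces to a local calculation at each good prime $p$: the Weil--Deligne representation attached to $E$ at $p$ is unramified, and the standard formula for the epsilon factor of an unramified representation tensored with an arbitrary one expresses $\varepsilon_p(E,\rho_K)$ in terms of $\det(\rho_E\!\mid_{G_{\mathbb{Q}_p}})$ evaluated at Frobenius together with a power of $\varepsilon_p(\rho_K)$ determined by the Artin conductor; a direct check shows this equals $\varepsilon_p(E)^{d-1}$ exactly, so $w_p(E,\rho_K)=1$.

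For property~(2), I would compute the archimedean factor from the Hodge structure of $E\otimes\rho_K$ over $\mathbb{R}$. Complex conjugation acts on $\rho_K$ with $(-1)$-eigenspace of dimension $r_2$, where $K$ has $r_2$ pairs of complex embeddings, and $\mathrm{sgn}(\mathrm{Disc}(K))=(-1)^{r_2}$. Combining with the Hodge structure of $E$, the standard formula for archimedean epsilon factors yields $\varepsilon_\infty(E,\rho_K)=\mathrm{sgn}(\mathrm{Disc}(K))\cdot\varepsilon_\infty(E)^{d-1}$, so $w_\infty(E,\rho_K)=\mathrm{sgn}(\mathrm{Disc}(K))$ after dividing by the normalization. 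The main obstacle is not conceptual but rather the careful matching of normalizations: one must verify at every place that the choice $\varepsilon_v(E)^{d-1}$ is precisely what makes the good-prime factors collapse to $1$ and the archimedean factor simplify to the sign of the discriminant. Dokchitser carries out exactly these local computations in \cite{Dokchitser2005}, so once the global epsilon factorization is written down and the normalization is fixed, properties~(1)--(3) follow directly.
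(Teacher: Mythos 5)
Your plan follows the same route as the paper, which itself offers no proof but simply cites Dokchitser's Theorem 16 and its surrounding discussion; unpacking that via the Langlands--Deligne local epsilon-factor formalism, as you do, is exactly what Dokchitser's proof rests on. One point worth making explicit, since your ``direct check'' in property~(1) glosses over it: with your normalization $w_p(E,\rho_K) = \varepsilon_p(E,\rho_K)/\varepsilon_p(E)^{d-1}$, at a good prime the inductivity formula for an unramified representation tensored with $\rho_K$ gives $\varepsilon_p(\rho_E\otimes\rho_K) = \varepsilon_p(\rho_K)^{2}\cdot\det\rho_E(\mathrm{Frob}_p)^{a(\rho_K)}$, and the second factor is a positive real number (a power of $p$), so the local root number reduces to $w_p(\rho_K)^2$. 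That this is $1$ is not automatic from self-duality alone; it uses the fact that $\rho_K$ is \emph{orthogonal} (it is a real permutation-module summand), so that its local root numbers lie in $\{\pm1\}$ by Deligne's theorem on orthogonal $\varepsilon$-constants. Alternatively one can avoid this input by normalizing each local factor by $\varepsilon_p(\rho_K)^2$ as well as $\varepsilon_p(E)^{d-1}$; then property~(1) is immediate from the inductivity formula, and the global product still matches the lemma because $w(\rho_K)=+1$, which follows from $\zeta_K(s)=\zeta(s)L(s,\rho_K)$ and the fact that Dedekind zeta functions have root number $+1$. With either normalization, the archimedean computation you sketch for property~(2) and the locality built into the formalism for property~(3) are correct.
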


From this, we derive the following important corollary.

\begin{corollary}\label{cor:sign}
Let $K$ and $K^\prime \in \mathcal{F}_d(X)$ be such that $K \otimes \mathbb{Q}_p \simeq K^\prime \otimes \mathbb{Q}_p$ for all $p \mid N_E$.  Suppose that $\mathrm{sgn}(\mathrm{Disc}(K)) = - \mathrm{sgn}(\mathrm{Disc}(K^\prime))$.  Then $w(E,\rho_K) = - w(E,\rho_{K^\prime})$.
\end{corollary}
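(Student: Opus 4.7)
The plan is to apply Dokchitser's factorization term by term to the ratio $w(E,\rho_K)/w(E,\rho_{K'})$ and verify that every factor except the archimedean one cancels, while the archimedean contributions differ by a sign forced by the hypothesis on discriminants.

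First, the factor $w(E)^{d-1}$ is the same in both expansions and cancels. Next, for every prime $p$ at which $E$ has good reduction, part (1) of the lemma gives $w_p(E,\rho_K) = w_p(E,\rho_{K'}) = 1$, so these primes contribute nothing. The crucial step is to handle the primes $p \mid N_E$: here part (3) says that $w_p(E,\rho_K)$ depends only on $\rho_E\!\mid_{G_{\mathbb{Q}_p}}$ and $\rho_K\!\mid_{G_{\mathbb{Q}_p}}$. The representation $\rho_E\!\mid_{G_{\mathbb{Q}_p}}$ is intrinsic to $E$ and is identical in both expressions, so what we must observe is that $\rho_K\!\mid_{G_{\mathbb{Q}_p}}$ depends only on the étale $\mathbb{Q}_p$-algebra $K \otimes \mathbb{Q}_p$.

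This last point is where one needs to be slightly careful, but it is standard: the representation $\rho_K$ arises as the permutation representation of $\mathrm{Gal}(\overline{\mathbb{Q}}/\mathbb{Q})$ on the set of embeddings $K \hookrightarrow \overline{\mathbb{Q}}$ modulo the trivial subrepresentation, and the decomposition of this set into $G_{\mathbb{Q}_p}$-orbits corresponds exactly to the factorization $K \otimes \mathbb{Q}_p \simeq \prod_{\mathfrak{p} \mid p} K_\mathfrak{p}$ with each orbit carrying the induced representation $\mathrm{Ind}_{G_{K_\mathfrak{p}}}^{G_{\mathbb{Q}_p}} \mathbf{1}$. Hence the hypothesis $K \otimes \mathbb{Q}_p \simeq K' \otimes \mathbb{Q}_p$ gives $\rho_K\!\mid_{G_{\mathbb{Q}_p}} \simeq \rho_{K'}\!\mid_{G_{\mathbb{Q}_p}}$, so part (3) yields $w_p(E,\rho_K) = w_p(E,\rho_{K'})$.

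Finally, part (2) supplies $w_\infty(E,\rho_K)\,w_\infty(E,\rho_{K'}) = \mathrm{sgn}(\mathrm{Disc}(K))\cdot \mathrm{sgn}(\mathrm{Disc}(K')) = -1$ by the sign hypothesis. Multiplying the local contributions together yields $w(E,\rho_K)/w(E,\rho_{K'}) = -1$, as desired. No genuine obstacle arises; the only subtlety is the elementary verification that the local restriction of $\rho_K$ is governed by the étale algebra $K \otimes \mathbb{Q}_p$, which is the same input that makes the conductor of $L(s,E,\rho_K)$ a purely local invariant at each prime.
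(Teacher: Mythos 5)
Your argument is correct and is exactly the (unstated but clearly intended) deduction from Dokchitser's lemma that the paper has in mind: cancel the common factor $w(E)^{d-1}$, note the good primes contribute $1$, observe that the bad-prime factors match because $\rho_K\!\mid_{G_{\mathbb{Q}_p}}$ is determined by the \'etale algebra $K\otimes\mathbb{Q}_p$, and let the archimedean factors supply the sign change. The one mildly non-trivial verification --- that the permutation representation of $G_{\mathbb{Q}_p}$ on embeddings decomposes according to $K\otimes\mathbb{Q}_p \simeq \prod_{\mathfrak{p}\mid p} K_{\mathfrak{p}}$, whence $\rho_K\!\mid_{G_{\mathbb{Q}_p}}$ depends only on that algebra --- is correctly identified and handled.
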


In proving Theorem \ref{thm:analytic}, we will need a slightly different way to control the root number $w(E,\rho_K)$.  In particular, we have \cite[Corollary 2]{Dokchitser2005}:

\begin{lemma}\label{lem:sign-jacobi}
Suppose that the conductor $N_E$ of $E$ is relatively prime to the discriminant $\mathrm{Disc}(K)$ of $K \in \mathcal{F}_d(X)$.  Then
\[
w(E,\rho_K) = w(E)^{d-1} \mathrm{sgn}(\mathrm{Disc}(K)) \left(\frac{\mathrm{Disc}(K)}{N_E}\right), 
\]
where $(\frac{\cdot}{\cdot})$ denotes the Kronecker symbol.
\end{lemma}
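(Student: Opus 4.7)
The plan is to unpack the factorization provided by the previous lemma under the coprimality hypothesis. The assumption that $N_E$ and $\mathrm{Disc}(K)$ are relatively prime forces a clean dichotomy at every rational prime $p$: either $E$ has good reduction at $p$, or $p \nmid \mathrm{Disc}(K)$ and hence $\rho_K$ is unramified at $p$. Property (1) of the preceding lemma disposes of all primes of the first type, and property (2) accounts for the archimedean factor as $\mathrm{sgn}(\mathrm{Disc}(K))$. The nontrivial content is therefore concentrated entirely at primes $p \mid N_E$, at each of which $\rho_K$ is unramified.

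I would next compute $w_p(E, \rho_K)$ at each prime $p \mid N_E$ by comparing with the root number of $E \otimes \rho_K$ itself. Since $\rho_K|_{G_{\mathbb{Q}_p}}$ factors through the (procyclic) unramified quotient, it decomposes as a direct sum of unramified characters $\chi_1, \ldots, \chi_{d-1}$ of $G_{\mathbb{Q}_p}$. The standard formula for local epsilon factors under an unramified twist then gives $w_p(E \otimes \chi_i) = \chi_i(\mathrm{Frob}_p)^{f_p(E)} w_p(E)$, where $f_p(E)$ denotes the conductor exponent of $E$ at $p$. Multiplying over $i$ yields
\[
w_p(E \otimes \rho_K) = (\det \rho_K)(\mathrm{Frob}_p)^{f_p(E)} \, w_p(E)^{d-1},
\]
which, after extracting the global factor $w(E)^{d-1}$ from the previous lemma's factorization, identifies $w_p(E, \rho_K) = (\det \rho_K)(\mathrm{Frob}_p)^{f_p(E)}$.

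To finish, I would identify $\det \rho_K$ explicitly. Because $\rho_K$ is the standard $(d-1)$-dimensional representation of $S_d$, its determinant is the sign character of $S_d$, whose fixed field in $\widetilde{K}$ is the quadratic resolvent $\mathbb{Q}(\sqrt{\mathrm{Disc}(K)})$. Hence for any $p \nmid \mathrm{Disc}(K)$ one has $(\det \rho_K)(\mathrm{Frob}_p) = \left(\frac{\mathrm{Disc}(K)}{p}\right)$ as a Kronecker symbol. Multiplying these Frobenius evaluations over $p \mid N_E$ with multiplicities $f_p(E)$, and using $N_E = \prod_{p \mid N_E} p^{f_p(E)}$, produces $\prod_{p \mid N_E} w_p(E, \rho_K) = \left(\frac{\mathrm{Disc}(K)}{N_E}\right)$. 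Plugging back into the factorization from the previous lemma yields the claimed identity.

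The main obstacle is justifying the unramified twisting formula $w_p(E \otimes \chi) = \chi(\mathrm{Frob}_p)^{f_p(E)} w_p(E)$ with the correct normalization, uniformly in the reduction type of $E$: one has to verify that the dependence on the chosen additive character and uniformizer cancels in the twist, so that only the Artin conductor exponent of $\rho_E$ at $p$ appears in the Frobenius exponent. This is a standard piece of local epsilon-factor theory but is the only genuinely nontrivial input; the remainder is bookkeeping together with the identification of the quadratic resolvent.
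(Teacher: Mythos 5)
The paper does not prove this lemma; it simply cites it as \cite[Corollary 2]{Dokchitser2005}. So there is no internal argument of the paper's to compare against, and what you have written is a reconstruction of the computation behind Dokchitser's result using the factorization from the preceding (also cited) lemma as a black box.

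Your sketch correctly isolates the right ingredients: the coprimality hypothesis forces $\rho_K$ to be unramified at every $p \mid N_E$, the local epsilon-factor for an unramified twist produces the exponent $f_p(E)$, and $\det\rho_K$ is the quadratic character cut out by the resolvent $\Q(\sqrt{\Disc K})$, so that $\prod_{p\mid N_E}(\det\rho_K)(\mathrm{Frob}_p)^{f_p(E)} = \big(\tfrac{\Disc K}{N_E}\big)$. That chain is sound, and you also flag the real technical point yourself, namely that the unramified-twist formula for the root number has to be checked to be normalization-independent across reduction types.

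The one step that is genuinely not justified, though, is the sentence where you extract $w(E)^{d-1}$ and conclude that $w_p(E,\rho_K) = (\det\rho_K)(\mathrm{Frob}_p)^{f_p(E)}$. The preceding lemma, as stated in the paper, only asserts that a factorization with properties (1)--(3) \emph{exists}; it does not tell you what $w_p(E,\rho_K)$ is at a bad prime, and in particular does not say $w_p(E,\rho_K) = w_p(E\otimes\rho_K)/w_p(E)^{d-1}$. That equality cannot be the definition of $w_p(E,\rho_K)$ in general, because at good primes with $\rho_K$ ramified it would violate property (1). So either you should go back to Dokchitser's actual definitions of the local factors to justify the identification, or you should bypass the preceding lemma entirely and carry out the global product directly: compute $w_v(E\otimes\rho_K)$ at $\infty$, at $p\mid\Disc K$ (where $\rho_E$ is unramified, so the same unramified-twist trick applies with the roles reversed), and at $p\mid N_E$, and verify that the three pieces assemble to $w(E)^{d-1}\,\mathrm{sgn}(\Disc K)\,\big(\tfrac{\Disc K}{N_E}\big)$. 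As written, you have shown the $p\mid N_E$ piece cleanly but have silently assumed the other two pieces combine into $w(E)^{d-1}\,\mathrm{sgn}(\Disc K)$, which is exactly the nonobvious content of Dokchitser's factorization that the paper invokes without reproof.
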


We close this section by showing that for almost all $K \in \mathcal{F}_d(X)$, if $E(K) \neq E(\mathbb{Q})$, then $\mathrm{rk}(E(K)) > \mathrm{rk}(E(\mathbb{Q}))$.

\begin{lemma}
\label{lem:fractional-points}
Let $E/\mathbb{Q}$ be an elliptic curve.  There is a constant $C_{E,d}$, depending only on $E$ and $d$, such that
\[
\#\{K\in\mathcal{F}_d(X): E(K) \neq E(\mathbb{Q}) \text{ but } \mathrm{rk}(E(K))=\mathrm{rk}(E(\mathbb{Q}))\} \leq C_{E,d}.
\]
\end{lemma}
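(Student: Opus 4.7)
The plan is as follows. Under the hypotheses $E(K) \neq E(\mathbb{Q})$ and $\mathrm{rk}(E(K))=\mathrm{rk}(E(\mathbb{Q}))$, the torsion subgroup $E(K)_{\mathrm{tors}}$ must strictly contain $E(\mathbb{Q})_{\mathrm{tors}}$, so there exists a torsion point $P \in E(K) \setminus E(\mathbb{Q})$. I will argue that $P$ must lie in a finite set depending only on $E$ and $d$, and that $P$ determines $K$ uniquely; the lemma then follows by counting.

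For the finiteness of the candidate set, I would invoke Merel's uniform boundedness theorem to fix an integer $N = N(d)$ such that $E(L)_{\mathrm{tors}} \subseteq E[N]$ for every number field $L$ of degree $d$ over $\mathbb{Q}$. Then $P \in E[N]$, a set of size $N^2$ depending only on $E$ and $d$. For the uniqueness of $K$, I would use that a field $K \in \mathcal{F}_d(X)$ admits no proper intermediate subfield: by the Galois correspondence, such subfields correspond to subgroups of $\mathrm{Gal}(\widetilde{K}/\mathbb{Q}) \simeq S_d$ that contain the point stabilizer $S_{d-1}$, and since the natural action of $S_d$ on $\{1,\ldots,d\}$ is $2$-transitive and hence primitive, $S_{d-1}$ is a maximal subgroup of $S_d$. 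Fixing an embedding $K \hookrightarrow \overline{\mathbb{Q}}$, the field $\mathbb{Q}(P)$ generated by the coordinates of $P$ is then a subfield of $K$ different from $\mathbb{Q}$, and therefore must equal $K$.

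Combining the two steps, each $K$ counted by the lemma arises as $\mathbb{Q}(P)$ for some $P \in E[N]$, so we may take $C_{E,d} = N(d)^2$. No substantive obstacle remains: the argument is a direct combination of Merel's theorem with the primitivity of the standard $S_d$-action, and Merel's result is precisely what is needed to ensure that the bound on the order of $P$ is uniform in $K$ as $K$ ranges over all degree $d$ extensions of $\mathbb{Q}$.
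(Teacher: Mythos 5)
The key step in your argument — that $E(K) \neq E(\mathbb{Q})$ together with $\mathrm{rk}(E(K)) = \mathrm{rk}(E(\mathbb{Q}))$ forces $E(K)_{\mathrm{tors}} \supsetneq E(\mathbb{Q})_{\mathrm{tors}}$ — is false, and this is a genuine gap. Equality of ranks only gives that $E(\mathbb{Q})$ has finite index in $E(K)$; that index can be concentrated entirely in the free part. Concretely, one may have $E(\mathbb{Q})_{\mathrm{tors}} = 0 = E(K)_{\mathrm{tors}}$, $E(\mathbb{Q}) = \mathbb{Z}\cdot Q$, and $E(K) = \mathbb{Z}\cdot P$ with $2P = Q$ and $P \notin E(\mathbb{Q})$: here $E(K) \supsetneq E(\mathbb{Q})$, the ranks agree, yet there is no new torsion point. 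So there need not exist any torsion $P \in E(K)\setminus E(\mathbb{Q})$, and the rest of your argument has nothing to hang on.

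The paper's proof (note the label \texttt{lem:fractional-points}) addresses exactly this: it extracts a point $P \in E(K)\setminus E(\mathbb{Q})$ of possibly infinite order with $\ell P \in E(\mathbb{Q})$ for some prime $\ell$ but $mP \notin E(\mathbb{Q})$ for $m < \ell$. The primitivity argument you give then correctly yields $\mathbb{Q}(P) = K$, but Merel cannot be applied to $P$ directly. Instead one observes that for any Galois conjugate $\sigma$, the difference $\sigma(P) - P$ is $\ell$-torsion (since $\ell P$ is rational) and lives in $E(\widetilde{K})$, a field of degree $d!$; Merel then bounds $\ell \leq T(d!)$. Finally one must still control how many fields $K$ can arise, which is done by noting that $\mathbb{Q}(P)$ is determined by the class of $\ell P$ in the finite group $E(\mathbb{Q})/\ell E(\mathbb{Q})$ together with a choice of $\ell$-torsion point in $E(\overline{\mathbb{Q}})$. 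Your proposal is missing both the passage from $P$ to the torsion difference $\sigma(P)-P$ and this last counting step.
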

\begin{proof}
For each $K$ counted, there must exist some prime $\ell\geq 2$ and some point $P \in E(K) \setminus E(\Q)$ for which  $\ell P\in E(\mathbb{Q})$ but $mP \not \in E(\mathbb{Q})$ for any $m < \ell$.
Since any field in $\mathcal{F}_d(X)$ has no non-trivial subfields, we must have $\mathbb{Q}(P) = K$ and, recalling our notation for the Galois closure, $\widetilde{\mathbb{Q}(P)} = \widetilde{K}$.
Now, any conjugate of $P$ differs from $P$ by some point of order $\ell$ in $E(\overline{\mathbb{Q}})$, so there must be at least one point of order $\ell$ defined over $\widetilde{\mathbb{Q}(P)} = \widetilde{K}$.

By work of Merel \cite{Merel}, there is an absolute constant $T(d!)$ such that $|E(L)_\mathrm{tors}|\leq T(d!)$ for any field $L$ of degree $d!$.  We therefore have $\ell \leq T(d!)$.  For each such $\ell$ and point $P$ as above, the field $\mathbb{Q}(P)$ depends only on the class of $\ell P$ in $E(\mathbb{Q})/\ell E(\mathbb{Q})$ and possibly the choice of an $\ell$-torsion point in 
$E(\overline{\mathbb{Q}})$. Hence only finitely many such fields arise, and this yields the lemma.
\end{proof}

\section{Useful results from Galois theory} \label{sec:galois-background}

In this section, we recall several useful results from Galois theory and we prove a few preliminary lemmas that will be useful in what is to come.

We start off by recalling the Hilbert irreducibility theorem in the following context.  Let $f(\mathbf{t},x) \in \mathbb{Q}(\mathbf{t})[x]$ be an irreducible polynomial of degree $d$ over $\mathbb{Q}(\mathbf{t})$ where $\mathbf{t} = (t_1,\dots,t_k)$.  This defines an extension $K = \mathbb{Q}(t)[x]/f(\mathbf{t},x)$ which need not be Galois closed over $\mathbb{Q}(\mathbf{t})$.  Let $L$ be its Galois closure, which we take to be generated by the polynomial $g(\mathbf{t},x)$, and we write $G = \mathrm{Gal}(L/\mathbb{Q}(\mathbf{t}))$.  For any $\mathbf{t}_0 \in \mathbb{Q}^k$, we let $f_{\mathbf{t}_0}$, $g_{\mathbf{t}_0}$, $K_{\mathbf{t}_0}$, $L_{\mathbf{t}_0}$, and $G_{\mathbf{t}_0}$ denote the associated objects obtained under specialization.

\begin{theorem}[Hilbert irreducibility]\label{thm:hit}
With notation as above, suppose $\mathbf{t}_0$ is such that $g_{\mathbf{t}_0}$ is irreducible over $\mathbb{Q}$.  Then the permutation representations of $G$ and $G_{\mathbf{t}_0}$ acting on the roots of $f$ and $f_{\mathbf{t}_0}$ are isomorphic.

Moreover, the above hypothesis holds for a proportion $1-o_H(1)$ of $\mathbf{t}$ inside any rectangular region in $\mathbb{Z}^k$ whose shortest side has length $H$.
\end{theorem}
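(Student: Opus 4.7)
The plan is to separate the two assertions. The first is a structural statement about the specialization of the Galois cover $L/\mathbb{Q}(\mathbf{t})$, and the second is the classical quantitative Hilbert irreducibility theorem; both are standard, and most of the work lies in careful bookkeeping.

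For the first claim, I would work with an integral model of the Galois cover $g(\mathbf{t}, x) = 0$ over $\mathbb{Z}[\mathbf{t}]$, excise the proper closed subscheme where it fails to be \'etale, and for $\mathbf{t}_0$ outside this subscheme choose a prime $\mathfrak{P}$ of the integral closure above the maximal ideal $\mathfrak{m}_{\mathbf{t}_0}$. In the unramified case the decomposition group $D_{\mathfrak{P}} \leq G$ reduces isomorphically onto $G_{\mathbf{t}_0}$, and reduction modulo $\mathfrak{P}$ yields a $D_{\mathfrak{P}}$-equivariant bijection from the roots of $f$ to the roots of $f_{\mathbf{t}_0}$. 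Thus the permutation representation of $G_{\mathbf{t}_0}$ on the roots of $f_{\mathbf{t}_0}$ is identified, via the two displayed isomorphisms, with the restriction of the $G$-action on roots of $f$ to $D_{\mathfrak{P}}$. Under the irreducibility hypothesis on $g_{\mathbf{t}_0}$ we have $|G_{\mathbf{t}_0}| = \deg g = |G|$, forcing $D_{\mathfrak{P}} = G$, and the two permutation representations agree.

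For the density claim, I would invoke the classical quantitative Hilbert irreducibility theorem. Reducibility of $g_{\mathbf{t}_0}$ over $\mathbb{Q}$ means $g_{\mathbf{t}_0} = h_1 h_2$ with non-constant factors $h_i$; by Gauss's lemma one may take integer coefficients, and only finitely many degree patterns arise. Each factorization type cuts out a proper Zariski-closed subvariety of $\mathbb{A}^k$, defined by the vanishing of suitable resultants in the coefficients of $g$. Counting integer points of a rectangular box with shortest side $H$ lying on these subvarieties gives a bound of $O_\epsilon(H^{k-1+\epsilon})$ by elementary methods, or the sharper $O(H^{k-1/2}(\log H)^\alpha)$ via the large sieve; either bound is $o(H^k)$, and dividing by the box volume $\gg H^k$ yields the claimed $o_H(1)$ exceptional proportion.

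The main obstacle is the bookkeeping in the first part. In the one-variable setting the specialization framework is classical, but with $k > 1$ one must work with an appropriate integral model on $\mathrm{Spec}\,\mathbb{Z}[\mathbf{t}]$ and localize at a suitable prime of the integral closure, and verify that the generic locus where the decomposition-group embedding $G_{\mathbf{t}_0} \hookrightarrow G$ makes sense contains a Zariski-open dense set of $\mathbf{t}_0$. Once that framework is in place, the group-theoretic conclusion is a one-line count of orders, and the density statement reduces directly to a standard integer-point-counting estimate on proper subvarieties.
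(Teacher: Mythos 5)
Your argument for the first assertion is correct in outline but takes a genuinely different route from the paper's. The paper's proof is entirely elementary: it fixes roots $\alpha$ of $g$ and $\beta$ of $g_{\mathbf{t}_0}$, encodes each $\sigma\in G$ by the unique polynomial $P_\sigma\in\mathbb{Q}(\mathbf{t})[x]$ of degree $<|G|$ with $\sigma(\alpha)=P_\sigma(\alpha)$, specializes to get a map $\widetilde{\sigma}\colon\beta\mapsto P_{\sigma,\mathbf{t}_0}(\beta)$, and checks by hand that $\sigma\mapsto\widetilde{\sigma}$ is an injective (hence bijective, by degree count) homomorphism; the matching of permutation actions is done by writing the roots of $f$ as $h_i(\alpha)$ and reducing these polynomial expressions. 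Your proof instead invokes the standard arithmetic-geometric machinery: integral model, \'etale locus, choice of a prime $\mathfrak{P}$ in the integral closure above $\mathbf{t}_0$, the isomorphism $D_{\mathfrak{P}}\xrightarrow{\sim}G_{\mathbf{t}_0}$ at unramified primes, and then the same order count $|G_{\mathbf{t}_0}|=\deg g_{\mathbf{t}_0}=|G|$ to force $D_\mathfrak{P}=G$. Your version is conceptually cleaner and generalizes more readily (to nonrational specializations, positive characteristic, etc.), while the paper's avoids any appeal to the theory of decomposition groups and is self-contained. One small bookkeeping point you leave implicit: you reason about $\mathbf{t}_0$ outside the non-\'etale locus, but the theorem's hypothesis is only that $g_{\mathbf{t}_0}$ is irreducible — you should note that in characteristic zero this forces $g_{\mathbf{t}_0}$ to be separable, hence $\mathrm{disc}(g_{\mathbf{t}_0})\neq 0$, hence (since the conductor of $\mathbb{Q}[\mathbf{t}][\alpha]$ in the integral closure divides $\mathrm{disc}(g)$) that $\mathfrak{P}$ is indeed unramified. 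Both your proof and the paper's take as given that the roots of $f$ specialize injectively to roots of $f_{\mathbf{t}_0}$; that step deserves a word in either treatment, though for the density of $\mathbf{t}_0$ actually used in the paper it is automatic. For the quantitative density claim the paper simply cites it as well known, whereas you sketch the standard thin-set/large-sieve count; both are fine.
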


This is classical, and we take the last claim (i.e., that $g_{\mathbf{t}_0}$ is irreducible for almost all $\mathbf{t}_0$) as `well known'. However, 
we will make frequent use of the isomorphism of permutation representations, and this feature is less commonly stated.  Therefore, we provide a short proof of this fact.

\begin{proof}
Let $\alpha \in \overline{\mathbb{Q}(\mathbf{t})}$ be a root of $g(\mathbf{t},x)$, so that $L = \mathbb{Q}(\mathbf{t})(\alpha)$.  Similarly,
let $\beta  \in \overline{\mathbb{Q}}$ be a root of $g_{\mathbf{t}_0}$ with $L_{\mathbf{t}_0} = \mathbb{Q}(\beta)$.

Since $L$ is Galois closed over $\mathbb{Q}(\mathbf{t})$, each automorphism $\sigma \in G$ is determined by the unique polynomial
$P_\sigma(x) \in \mathbb{Q}(\mathbf{t})[x]$ for which $\deg(P_\sigma) < |G|$ and $\sigma(\alpha) = P_\sigma(\alpha)$. Writing
$P_{\sigma, \mathbf{t}_0}(x) \in \mathbb{Q}[x]$ for the polynomial obtained by specializing $\mathbf{t}$ to $\mathbf{t_0}$, we see at once
that the map $\widetilde{\sigma} \colon \beta \mapsto P_{\sigma, \mathbf{t}_0}(\beta)$ is an automorphism of $L_{\mathbf{t}_0}$.

The map $\sigma \mapsto \widetilde{\sigma}$ is thus a homomorphism from $G$ to $G_{\mathbf{t}_0}$. It is injective since $g_{\mathbf{t}_0}$ is
irreducible, forcing each of the $P_{\sigma, \mathbf{t}_0}(\beta)$ to be distinct. Since $|G| = \mathrm{deg}(g_{\mathbf{t}_0})$, the set $\{P_{\sigma, \mathbf{t}_0}(\beta)\}_{\sigma \in G}$ forms 
a complete set of conjugates of $\beta$.  Thus, the map $\sigma \mapsto \widetilde{\sigma}$ is
surjective and hence an isomorphism.

The roots of $f$ can be written in the form $h_i(\alpha)$, where $h_i$ ranges over a set of
$d$ polynomials in $\mathbb{Q}(\mathbf{t})[x]$, each of degree less than $|G|$.
By construction, if $h$ and $h'$ are any two such polynomials 
with $\sigma(h(\alpha)) = h'(\alpha)$, we must have
$\widetilde{\sigma}(h_{\mathbf{t}_0}(\beta)) = h'_{\mathbf{t}_0}(\beta)$. But the roots of $f_{\mathbf{t}_0}$ are exactly the
$h_{i, \mathbf{t}_0}(\beta)$, so that the action of $\sigma$ on the $h_i(\alpha)$ corresponds exactly to the action of $\widetilde{\sigma}$ on the 
$h_{i, \mathbf{t}_0}(\beta)$. This is our desired isomorphism of permutation representations. 
\end{proof}

We derive the following important corollary to Theorem \ref{thm:hit} that will enable us to populate the Galois groups $\mathrm{Gal}(f(\mathbf{t},x)/\mathbb{Q}(\mathbf{t}))$.

\begin{corollary}\label{cor:cycles}
Suppose $f(\mathbf{t},x)$ is irreducible over $\mathbb{Q}(\mathbf{t})$.  If the permutation representation $\mathrm{Gal}(f(\mathbf{t}_0,x) / \mathbb{Q})$ contains an element of a given cycle type for a positive proportion of $\mathbf{t}_0 \in \mathbb{Q}^k$ when ordered by height, then the permutation representation of $\mathrm{Gal}(f(\mathbf{t},x)/\mathbb{Q}(\mathbf{t}))$ must contain an element of the same cycle type.
\end{corollary}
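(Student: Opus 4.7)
The plan is to combine the hypothesis with Theorem \ref{thm:hit} (Hilbert irreducibility) and take the intersection of two density statements.

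First I would fix the cycle type $\tau$ and consider, inside a box $B_H \subset \mathbb{Z}^k$ whose shortest side has length $H$, the set $S_H$ of $\mathbf{t}_0 \in B_H$ for which $g_{\mathbf{t}_0}$ is irreducible over $\mathbb{Q}$. By Theorem \ref{thm:hit}, $\#S_H/\#B_H = 1 - o_H(1)$, and for every $\mathbf{t}_0 \in S_H$ the permutation representation of $G_{\mathbf{t}_0}$ is isomorphic (as a permutation group acting on the roots of $f_{\mathbf{t}_0}$) to the permutation representation of $G$ acting on the roots of $f$. Next I would let $T_H \subset B_H$ denote the set of $\mathbf{t}_0$ such that $G_{\mathbf{t}_0}$ contains an element of cycle type $\tau$; by hypothesis, $\liminf_H \#T_H/\#B_H > 0$.

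Since $\#S_H/\#B_H \to 1$, we have
\[
\frac{\#(S_H \cap T_H)}{\#B_H} \;\geq\; \frac{\#T_H}{\#B_H} - \frac{\#(B_H \setminus S_H)}{\#B_H} \;>\; 0
\]
for $H$ sufficiently large, so $S_H \cap T_H$ is nonempty. Picking any $\mathbf{t}_0$ in this intersection, the permutation representation of $G_{\mathbf{t}_0}$ contains an element of cycle type $\tau$, and this representation is isomorphic to that of $G$. Consequently $G$ itself contains an element of the same cycle type, as required. (Although the statement speaks of $\mathbf{t}_0 \in \mathbb{Q}^k$, the hypothesis applied to boxes $B_H \subset \mathbb{Z}^k$ is enough, since rational specializations containing such integer boxes form an even larger family; alternatively, one clears denominators and reduces to the integer case.)

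There is essentially no obstacle here: the entire content is packaged in Theorem \ref{thm:hit}, which guarantees both irreducibility of the generic specialization and, crucially, the isomorphism of permutation representations (not merely of abstract Galois groups). The only mild care required is to phrase the two density hypotheses on a common family of boxes so that the intersection is visibly nonempty, which is standard.
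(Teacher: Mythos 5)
Your proof is correct and is essentially the argument the paper intends: the corollary is stated as an immediate consequence of Theorem~\ref{thm:hit}, and the content is exactly the density-intersection argument you give, together with the key fact (emphasized in the paper's statement and proof of Theorem~\ref{thm:hit}) that the isomorphism is one of permutation representations, not merely abstract groups. Your parenthetical about rational versus integral specializations is a reasonable gloss on the slight mismatch between the corollary's phrasing and the theorem's; the paper leaves this implicit as well.
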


Corollary \ref{cor:cycles} gives a means to show that the Galois group $\mathrm{Gal}(f(\mathbf{t},x)/\mathbb{Q}(\mathbf{t}))$ contains elements with many different cycle types.  The following lemma then enables us to show that in many cases, this suffices to guarantee that $\mathrm{Gal}(f(\mathbf{t},x)/\mathbb{Q}(\mathbf{t})) \simeq S_d$.

\begin{lemma}\label{lem:S_d-criterion}
Suppose that $G$ is a subgroup of $S_d$ such that:
\begin{itemize}
\item $G$ contains a $d$-cycle and a transposition; and,
\item Either $G$ contains a $(d - 1)$-cycle, or $d \geq 5$ is odd and $G$ contains a $(d - 2)$-cycle.
\end{itemize}
Then $G = S_d$.
\end{lemma}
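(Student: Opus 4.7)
The plan is to reduce to Jordan's classical theorem that any primitive subgroup of $S_d$ containing a transposition must equal $S_d$. Since $G$ contains a $d$-cycle, it acts transitively on $\{1,\ldots,d\}$, so it suffices to establish primitivity; once this is in hand, the transposition hypothesis finishes the argument by Jordan.

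To show $G$ is primitive, I would argue by contradiction. Suppose $G$ admits a nontrivial system of blocks, necessarily with block size $k$ satisfying $1 < k < d$ and $k \mid d$. Let $\tau \in G$ denote the $(d-1)$-cycle (if present) or the $(d-2)$-cycle (in the case when $d \geq 5$ is odd), and for any point $a$ let $B_a$ denote the block containing $a$. Since $\tau$ permutes blocks and fixes $a$, the block $B_a$ is necessarily $\tau$-invariant, so $B_a \setminus \{a\}$ is a $\tau$-invariant subset of the non-fixed points of $\tau$, on which $\tau$ acts as a single cycle.

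In the $(d-1)$-cycle case, the non-fixed points form one $\tau$-orbit of length $d-1$, whose only invariant subsets are $\emptyset$ and the entire orbit. Thus $|B_a \setminus \{a\}| \in \{0, d-1\}$, forcing $k \in \{1,d\}$, a contradiction. In the $(d-2)$-cycle case, let $a$ and $b$ be the two fixed points of $\tau$; since $d$ is odd and $k > 1$ divides $d$, we must have $k \geq 3$. If $B_a = B_b$, then $B_a \setminus \{a,b\}$ is a nonempty proper $\tau$-invariant subset of the length-$(d-2)$ orbit, which is impossible. If $B_a \neq B_b$, then $B_a \setminus \{a\}$ and $B_b \setminus \{b\}$ are two disjoint nonempty proper invariant subsets of that same orbit, again impossible. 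In either case we reach a contradiction, so $G$ is primitive.

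The main step is this primitivity argument; Jordan's theorem then yields $G = S_d$ immediately. I expect the small but essential observation will be the parity hypothesis in the $(d-2)$-cycle case: if $d$ were even, block size $k = 2$ would not be ruled out by the argument above, since then $B_a = \{a,b\}$ could consist entirely of fixed points of $\tau$.
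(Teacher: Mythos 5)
Your proof is correct, but it takes a genuinely different route from the paper's. The paper runs the elementary conjugation argument: with the $(d-1)$-cycle case handled by the standard trick of conjugating a single transposition $(i\ d)$ by powers of the $(d-1)$-cycle to produce all of $(1\ d), (2\ d), \dots$, which generate $S_d$; and the $(d-2)$-cycle case handled by first producing a transposition straddling the fixed set and the moved set, reducing to the previous case. Your approach instead invokes Jordan's theorem (a primitive subgroup of $S_d$ containing a transposition is $S_d$) and reduces the whole problem to verifying primitivity via a block argument. Your block argument is sound: the block $B_a$ of a point $a$ fixed by $\tau$ is necessarily $\tau$-invariant, so $B_a$ minus the fixed points of $\tau$ is a $\tau$-invariant subset of a single $\langle\tau\rangle$-orbit, hence empty or everything, and the rest is bookkeeping; you also correctly identify where the hypothesis that $d$ is odd enters in the $(d-2)$-cycle case (it rules out $k=2$, which would allow the block $\{a,b\}$ of the two fixed points to slip through). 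What the paper's route buys is self-containedness — it needs only the generating-set fact for $S_d$, no external theorem; what your route buys is a cleaner conceptual structure (transitivity from the $d$-cycle, primitivity from the long almost-full cycle, conclusion from Jordan) at the cost of importing a nontrivial black box. Either is acceptable; if you cite Jordan's theorem, do give a reference, since it is the load-bearing step.
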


\begin{proof} When $G$ contains a $(d - 1)$-cycle, we recall the proof from 
\cite[Lemma 8.26]{milne}. After renumbering, suppose that the $(d - 1)$-cycle is
$(1 \ 2 \ 3 \cdots d - 1)$. Since $G$ is transitive, it will 
contain a conjugate of the transposition of the form $(i \ d)$, for some $i < d$. Conjugating
by the $(d - 1)$-cycle and its powers, we see that $G$ will contain $(i \ d)$ for all $i < d$,
and these elements generate $S_n$.

\smallskip
Now, suppose instead that $d \geq 5$ is odd and $G$ contains a $(d - 2)$-cycle. If $G$ contains
a transposition $(i \ j)$, where the $(d - 2)$-cycle fixes $i$ but not $j$, then an argument similar
to that above establishes that $G$ contains the full symmetric group on $i$ and the elements permuted by the
$(d - 2)$-cycle.
So $G$ contains a $(d - 1)$-cycle and we are reduced to the first case. 

Finally, we prove that $G$ must contain such a transposition. By transitivity, $G$ will contain a transposition
$(i \ j)$ where the $(d - 2)$-cycle fixes at least one of $i$ and $j$. If it fixes exactly one of them, we're done. Otherwise, choose a suitable power $\sigma$ of the $d$-cycle
so that $\sigma(i) = j$; since $d$ is odd, we have $\sigma(j) = k$ for some $k \neq i, j$. Then $G$ contains $(j \ k)$, which is the desired transposition.
\end{proof}

We establish the existence of elements with various cycle types in a few different ways.  To obtain transpositions, we make use of the following well-known lemma.

\begin{lemma}\label{lem:trans-criterion}
Let $f(x) \in \mathbb{Z}[x]$ be irreducible and suppose that for some prime $p$ not dividing the leading coefficient of $f$, $p\mid\mid \mathrm{Disc}(f)$.  Then the natural permutation representation of $\mathrm{Gal}(f(x)/\mathbb{Q})$ contains a transposition.
\end{lemma}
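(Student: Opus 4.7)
The strategy is to work $p$-adically and realize the desired transposition as a generator of the inertia group at a prime above $p$. After the standard substitution replacing $f(x)$ by $a_d^{d-1} f(x/a_d)$, where $a_d$ is the leading coefficient of $f$, I may assume $f$ is monic: this preserves the permutation representation of the Galois group on the roots, and the $p$-part of $\Disc(f)$ is unchanged since $p \nmid a_d$. Letting $\alpha$ be a root of $f$ and $K = \Q(\alpha)$, the identity $\Disc(f) = [\calO_K:\Z[\alpha]]^2 \cdot \Disc(K)$ combined with $p\mid\mid\Disc(f)$ forces $p \nmid [\calO_K:\Z[\alpha]]$ and $p\mid\mid\Disc(K)$. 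By Kummer--Dedekind, the factorization of $f$ mod $p$ therefore matches $p\calO_K = \prod_i \mathfrak{p}_i^{e_i}$, and the inequality $v_p(\Disc(K)) \geq \sum_i(e_i-1)f_i$, with equality exactly when $p$ is tamely ramified, forces the ramification pattern to be $e_1 = 2,\ f_1 = 1$ with all other primes unramified, and $p$ odd (tameness requires $p \nmid 2$).

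Next, applying Hensel's lemma to the factorization of $f$ mod $p$ yields $f = h \cdot g$ in $\Z_p[x]$, where $h$ is monic of degree $2$ corresponding to the ramified prime and $g$ is a product of Hensel lifts of the remaining separable factors. The discriminant relation $\Disc(f) = \Disc(h)\,\Disc(g)\,\mathrm{Res}(h,g)^2$, together with the fact that $\bar g$ is separable and coprime to $\bar h$ modulo $p$, shows $v_p(\Disc(h)) = v_p(\Disc(f)) = 1$; in particular $\Disc(h)$ is not a square in $\Q_p^{\times}$, so $h$ is irreducible and its splitting field $M := \Q_p(\sqrt{\Disc(h)})$ is a ramified quadratic extension of $\Q_p$. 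The splitting field $N$ of $g$, on the other hand, is unramified over $\Q_p$, since each Hensel factor of $g$ cuts out an unramified local extension.

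Because $M$ is totally ramified of degree $2$ and $N$ is unramified over $\Q_p$, I have $M \cap N = \Q_p$, and hence
\[
\Gal(MN/\Q_p) \simeq \Gal(M/\Q_p) \times \Gal(N/\Q_p).
\]
The element $(\tau, 1)$, where $\tau$ generates $\Gal(M/\Q_p)$, swaps the two roots of $h$ and fixes each root of $g$; it therefore acts as a transposition on the roots of $f$, and embedding $\Gal(MN/\Q_p)$ into $\Gal(f/\Q)$ as a decomposition subgroup yields the desired transposition. The step demanding the most care is the ramification bookkeeping: deducing from $v_p(\Disc(K)) = 1$ that $p$ must be odd, that exactly one prime above $p$ ramifies, and that it does so with $e = 2,\ f = 1$. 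This combination of facts, which uses both the direction and the equality case of the tame--wild discriminant inequality, is what secures the clean splitting of $\Gal(MN/\Q_p)$ as a direct product in the final step.
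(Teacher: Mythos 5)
Your proof is correct and takes essentially the same route as the paper's: work $p$-adically, deduce from $p\mid\mid\mathrm{Disc}(f)$ that the splitting field over $\mathbb{Q}_p$ is a ramified quadratic extension of an unramified extension, and realize the transposition as the generator of the order-two inertia subgroup. The paper's proof states this $p$-adic structure without justification; your ramification bookkeeping via the tame/wild discriminant bound and your Hensel factorization $f = hg$ in $\mathbb{Z}_p[x]$ supply exactly the details it omits.
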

\begin{proof}
Let $L_p$ be the splitting field of $f(x)$ over $\mathbb{Q}_p$.  The claim follows upon observing that $L_p$ is a ramified quadratic extension of an unramified extension of $\mathbb{Q}_p$, so that $\mathrm{Gal}(L_p/\mathbb{Q}_p)$ contains a transposition, and recalling the inclusion $\mathrm{Gal}(L_p/\mathbb{Q}_p) \hookrightarrow \mathrm{Gal}(f(x)/\mathbb{Q})$.
\end{proof}

To find long cycle types, we use of the theory of Newton polygons.  Given a rational polynomial $f(x) = a_d x^d + \dots + a_0$, its {\itshape $p$-adic Newton polygon} is defined to be the lower convex hull of the points $(i,v_p(a_i))$.
It is a union of finitely many line segments
whose slopes match the valuations of the roots of $f$ over $\overline{\mathbb{Q}_p}$, with multiplicities equal to their horizontal lengths. See
\cite[Ch. II.6]{Neukirch} for a good reference.

The Newton polygon controls much of the behavior of $\mathrm{Gal}(f(x)/\mathbb{Q}_p)$.  For our purposes, the following lemma suffices.

\begin{lemma}\label{lem:newton}
Suppose that the Newton polygon of $f(x)$ as described above contains a line segment of slope $m/n$ with $\mathrm{gcd}(m,n)=1$.  Assume that the length of this segment is $n$ and that the denominator of every other slope is coprime to $n$.  Then $\mathrm{Gal}(f(x)/\mathbb{Q})$ contains an $n$-cycle.
\end{lemma}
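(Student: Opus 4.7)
The plan is to work locally at the prime $p$ and extract the $n$-cycle from the action of inertia on the roots of $f$.

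By the Newton polygon factorization theorem (e.g., \cite[Ch.~II.6]{Neukirch}), $f$ factors over $\mathbb{Q}_p$ as $f = g \cdot h$, where the roots of $g$ are precisely those of valuation $m/n$. The length-$n$ constraint together with $\gcd(m,n) = 1$ forces every irreducible factor of $g$ over $\mathbb{Q}_p$ to have a root of valuation $m/n$, which requires the local ramification index to be divisible by $n$; since $\deg g = n$, the factor $g$ itself is irreducible and defines a totally ramified extension of $\mathbb{Q}_p$ of degree $n$.

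I would next pass to the maximal unramified extension $\mathbb{Q}_p^{\mathrm{un}}$, over which $g$ remains irreducible, and consider a topological generator $\sigma$ of tame inertia, acting by $\sigma(\pi^{1/N}) = \zeta_N\,\pi^{1/N}$ for a uniformizer $\pi$ and compatible roots of unity $\zeta_N$. Since $g$ defines a totally tamely ramified cyclic extension of $\mathbb{Q}_p^{\mathrm{un}}$ of degree $n$, the element $\sigma$ restricts to a generator of its Galois group and therefore permutes the $n$ roots of $g$ as a single $n$-cycle. Applied to each slope segment of $h$, the hypothesis that every other slope has denominator coprime to $n$ yields, via the same analysis, that $\sigma$ acts on the roots of $h$ through cycles of lengths coprime to $n$.

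Finally, by the Chinese Remainder Theorem I can choose an integer $t$ with $\gcd(t,n) = 1$ and $t$ divisible by every cycle length of $\sigma$ on the roots of $h$. The element $\sigma^t$ then acts as an $n$-cycle on the roots of $g$ and as the identity on all other roots, yielding the desired $n$-cycle in the decomposition group at $p$ and hence in $\mathrm{Gal}(f(x)/\mathbb{Q})$. The main obstacle is establishing that the cycle lengths of $\sigma$ on the roots of $h$ are indeed coprime to $n$; in the tame case this follows from a direct computation with the explicit formula for $\sigma$, while wild inertia (when $p \mid n$) requires a more careful treatment of the ramification filtration, though in the applications the relevant primes can be chosen to keep us in the tame regime.
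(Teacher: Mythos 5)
Your approach is genuinely different from the paper's, and it is more explicit precisely where the paper is most terse. The paper's proof, after observing that $f$ factors as $f_0 f_1$ over $\mathbb{Q}_p$ with $f_0$ totally ramified of degree $n$, ends with ``the result now follows from the inclusions $\mathrm{Gal}(f_0/\mathbb{Q}_p) \subseteq \mathrm{Gal}(f/\mathbb{Q}_p) \subseteq \mathrm{Gal}(f/\mathbb{Q})$.'' But the first of these is a quotient map, not an inclusion, and a lift of an $n$-cycle from $\mathrm{Gal}(f_0/\mathbb{Q}_p)$ to $\mathrm{Gal}(f/\mathbb{Q}_p)$ may well act nontrivially on the roots of $f_1$, so the paper's last step simply does not engage with the issue that the coprimality hypothesis is there to handle. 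Your proof does: passing to $\mathbb{Q}_p^{\mathrm{un}}$, taking a generator $\sigma$ of tame inertia, and forming $\sigma^t$ with $\gcd(t,n)=1$ and $t$ divisible by the other cycle lengths is exactly the missing argument, and you correctly identified that this is where the coprimality of the other slopes must enter.

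The gap is in the step you yourself flag as ``the main obstacle.'' You assert that, in the tame case, the cycle lengths of $\sigma$ on the roots of $h$ are coprime to $n$ by ``a direct computation with the explicit formula for $\sigma$.'' This does not follow from the hypothesis. Those cycle lengths are the ramification indices of the irreducible factors of $h$ over $\mathbb{Q}_p^{\mathrm{un}}$; each such index is \emph{divisible by} the corresponding slope denominator, but need not equal it, since a root of integer valuation (slope denominator $1$) can perfectly well generate a totally ramified extension. Concretely, take $f(x) = (x^3 - 7)\bigl((x-1)^3 + 7\bigr)$ at $p = 7$: the Newton polygon has a slope-$\tfrac{1}{3}$ segment of length $3$ and a slope-$0$ segment of length $3$ (denominator $1$, coprime to $n = 3$), so the lemma's hypotheses hold; yet $(x-1)^3 + 7$ is Eisenstein after a shift, hence totally ramified of degree $3$ over $\mathbb{Q}_7$, so tame inertia acts as a product of two disjoint $3$-cycles, no power of which is a single $3$-cycle in $S_6$. (Indeed $\mathrm{Gal}(f/\mathbb{Q}) = S_3$ acting diagonally on the six roots, and contains no $3$-cycle at all.) The statement as written thus requires the stronger hypothesis that the \emph{ramification indices} of the remaining local factors be coprime to $n$, not merely their slope denominators. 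In the paper's actual applications this stronger condition holds automatically — the other Newton segments there have length at most $2$, forcing ramification index $1$ or $2$ and hence coprime to the relevant odd $n$ — but neither your argument nor the paper's acknowledges that this additional input is being used.
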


\begin{proof}
The hypotheses ensure that the roots of valuation $m/n$ form a set of Galois conjugates over $\mathbb{Q}_p$.  Thus, $f(x)$ admits a factorization $f(x) = f_0(x) f_1(x)$ over $\mathbb{Q}_p$, say, where the roots of $f_0(x)$ are the roots of valuation $m/n$.  Since the degree of $f_0(x)$ is $n$ by assumption, it must cut out a totally ramified extension of $\mathbb{Q}_p$.  The result now follows from the inclusions $\mathrm{Gal}(f_0(x)/\mathbb{Q}_p) \subseteq \mathrm{Gal}(f(x)/\mathbb{Q}_p) \subseteq \mathrm{Gal}(f(x)/\mathbb{Q})$.
\end{proof}

Finally, we recall some basic facts concerning polynomial resultants.  The {\itshape resultant} of two polynomials $f(x) = a_0 x^n + a_1x^{n-1} + \dots + a_n$ and $g(x) = b_0 x^m + \dots + b_m$ is given by
\begin{equation}\label{eq:result}
\mathrm{Res}(f,g) = a_0^m b_0^n \prod_{f(\alpha)=g(\beta)=0} (\alpha-\beta) \ = (-1)^{nm} b_0^n \prod_{g(\beta) = 0} f(\beta),
\end{equation}
where the products run over roots of $f$ and $g$, counted with multiplicity.  The key lemma here is the following. 

\begin{lemma}\label{lem:disc-res}
Let $F(x) = a_0 x^n + \dots + a_n$ be a polynomial.  Then
\[
\mathrm{Disc}(F) = \frac{(-1)^{n(n-1)/2}}{a_0} \mathrm{Res}(F,F^\prime) = (-1)^{n(n-1)/2} n^n a_0^{n-1} \prod_{\beta: F^\prime(\beta)=0} F(\beta).
\]
\end{lemma}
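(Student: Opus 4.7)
The plan is to derive both equalities directly from the product-formula definition of the discriminant together with the two expressions for the resultant recorded in \eqref{eq:result}. Writing $F(x) = a_0\prod_{i=1}^n (x-\alpha_i)$ for the factorization of $F$ over $\overline{\mathbb{Q}}$, the discriminant is
\[
\mathrm{Disc}(F) = a_0^{2n-2}\prod_{i<j}(\alpha_i-\alpha_j)^2,
\]
and the claim is that this quantity can be re-expressed in terms of $\mathrm{Res}(F,F^\prime)$ in two different ways.

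For the first equality, the key input is the computation $F^\prime(\alpha_i)=a_0\prod_{j\neq i}(\alpha_i-\alpha_j)$ obtained by differentiating the factored form of $F$. Taking the product over the roots $\alpha_i$ and pairing each ordered pair $(i,j)$ with its reverse produces
\[
\prod_{i=1}^n F^\prime(\alpha_i) = (-1)^{n(n-1)/2} a_0^n \prod_{i<j}(\alpha_i-\alpha_j)^2 = (-1)^{n(n-1)/2}\,a_0^{2-n}\,\mathrm{Disc}(F).
\]
On the other hand, collecting the factor of $b_0$ into $g(\alpha)$ in the symmetric form of \eqref{eq:result} gives $\mathrm{Res}(F,F^\prime) = a_0^{n-1} \prod_{i} F^\prime(\alpha_i)$, and comparing these two expressions rearranges to the first equality.

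For the second equality, I would instead apply the second form of the resultant in \eqref{eq:result} directly to $(f,g)=(F,F^\prime)$. Since $\deg F^\prime = n-1$ has leading coefficient $na_0$, and since $n(n-1)$ is always even, this yields
\[
\mathrm{Res}(F,F^\prime) = (-1)^{n(n-1)}(na_0)^n\prod_{F^\prime(\beta)=0} F(\beta) = n^n a_0^n\prod_{F^\prime(\beta)=0} F(\beta),
\]
and substituting into the first equality produces the claimed formula. The entire argument is elementary algebra, and the only obstacle is the careful bookkeeping of the sign $(-1)^{n(n-1)/2}$ and the powers of $a_0$ at each step; there is no deeper difficulty.
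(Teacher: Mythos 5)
Your proposal is correct and follows essentially the same route as the paper: both equalities are extracted from the resultant identities recorded in \eqref{eq:result}. The only cosmetic difference is that the paper cites \cite[Proposition IV.8.5]{Lang} for the first equality, whereas you re-derive it directly from the factored forms of $F$ and $F'$; your sign and leading-coefficient bookkeeping checks out throughout.
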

\begin{proof}
See, e.g., \cite[Proposition IV.8.5]{Lang} for the first equality, and the second follows from \eqref{eq:result}.
\end{proof}

\section{Analysis of Galois groups in a family} \label{sec:galois}

We are finally ready to discuss the family of polynomials we will use to construct points on elliptic curves over number fields.  Let $E$ be an elliptic curve given by a Weierstrass equation $y^2 = f(x)$. 
We define a polynomial $P(x, t) = P_f(x, t) \in \mathbb{Z}[x, t]$ by
\begin{equation}\label{eq:Pf_def}
P_f(x, t) = 
 \begin{cases}
 t^2 x^d - f(x), & \text{ $d$ even}, \\
 x^{d - 3} f(x) - t^2,  & \text{ $d$ odd, $d\geq 5$}, \\
 f(x)-(x +t)^2, & d=3.
 \end{cases}
\end{equation}
By construction, for each specialization $t = t_0 \in \mathbb{Q}$, each of
$(x, t_0x^{d/2})$, $(x, t_0 x^{\frac{3 - d}{2}})$, and $(x,x+t_0)$ is respectively a point on 
$E(K)$, where \[
K := \mathbb{Q}[x] / (P(x, t_0)).
\]
This construction is exactly what we will use for small degrees, and it is a specialization of the construction we will use for larger $d$.  In either case, we wish to argue that, for many choices of $t_0$, $K$ will indeed define an $S_d$-number field.  In view of the Hilbert irreducibility theorem, Theorem \ref{thm:hit}, the key result in this section is thus the following.

\begin{proposition}\label{prop:Sd}
Given $E$, there exists a Weierstrass model $y^2 = f(x)$ of $E$, integral except possibly at a single prime, for which 
$\mathbb{Q}(t)[x]/ (P(x, t))$ is a field extension of $\mathbb{Q}(t)$ of degree $d$
whose Galois closure has Galois group $S_d$ over $\Q(t)$.
\end{proposition}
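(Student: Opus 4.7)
The plan is to verify the hypotheses of Lemma~\ref{lem:S_d-criterion} for the Galois group $G$ of $P(x,t)$ over $\mathbb{Q}(t)$ by exhibiting specializations $t_0 \in \mathbb{Q}$ whose $p$-adic Newton polygons yield, respectively, a $d$-cycle, a $(d-1)$-cycle (when $d$ is even) or $(d-2)$-cycle (when $d$ is odd, $d \geq 5$), and a transposition; Corollary~\ref{cor:cycles} then propagates these cycle types to $G$. The case $d = 3$ will be handled separately, since showing $G \simeq S_3$ only requires transitivity together with a single transposition.

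We first verify that $P(x, t)$ is irreducible of degree $d$ in $\mathbb{Q}(t)[x]$, so that $\mathbb{Q}(t)[x]/(P)$ is a degree-$d$ field extension. In each of the three cases this is a short algebraic check: for instance, when $d$ is even, $P(x, t) = x^d t^2 - f(x)$ is quadratic in $t$ over $\mathbb{Q}(x)$ with discriminant $4x^d f(x)$, which is not a square in $\mathbb{Q}(x)$ since $f$ is a cubic with distinct roots, so Gauss's lemma gives irreducibility in $\mathbb{Q}(t)[x]$; the other two cases are similar.

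The key flexibility comes from the Weierstrass model: the substitution $x \mapsto u^{-2}x$, $y \mapsto u^{-3}y$ scales each coefficient $a_i$ of $f(x) = x^3 + a_2 x^2 + a_1 x + a_0$ by $u^{2(3-i)}$, so choosing $u = p^k$ for a fixed prime $p$ allows us to prescribe the $p$-adic valuations of the $a_i$ at will while keeping the model integral. For the $d$-cycle when $d$ is odd, we specialize $t_0 = p$ after arranging each $a_i$ to be $p$-adically large; the Newton polygon of $P(x, p)$ is then a single segment of slope $-2/d$, and Lemma~\ref{lem:newton} yields a $d$-cycle since $\gcd(2, d) = 1$. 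When $d$ is even, the coefficient of $x^3$ in $P$ is pinned at $-1$, forcing the point $(3, 0)$ onto every Newton polygon and obstructing the naive construction; instead we specialize at $t_0 = p^{-k}$ and tune $v_p(a_0)$ so that the polygon becomes a single segment of slope $-(v_p(a_0) + 2k)/d$ with numerator coprime to $d$, again yielding the $d$-cycle. The required $(d-1)$-cycle or $(d-2)$-cycle is produced analogously, via a different specialization designed so that the Newton polygon contains a segment of the required length whose slope has denominator coprime to the remaining slopes; in the odd case a single such specialization can be arranged to simultaneously produce a companion segment of length $2$, yielding a transposition as well.

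For a transposition in general, we appeal to Lemma~\ref{lem:trans-criterion}, which reduces the task to finding an integer $t_0$ such that $\mathrm{Disc}_x(P(x, t_0))$ has at least one prime factor of exact multiplicity one. This requires that $\mathrm{Disc}_x(P(x, t)) \in \mathbb{Z}[t]$ not be a perfect square in $\mathbb{Q}[t]$; this follows from the Newton polygon construction already giving a transposition in the odd case, and for the even case from direct inspection of the leading behavior of the discriminant via Lemma~\ref{lem:disc-res}. A standard squarefree-sieve argument then supplies the desired $t_0$. Combining the $d$-cycle, the $(d-1)$- or $(d-2)$-cycle, and the transposition via Lemma~\ref{lem:S_d-criterion} yields $G \simeq S_d$. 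The main obstacle we anticipate is the $d$-cycle construction for $d$ even, where the negative-valuation regime for $t_0$ requires careful bookkeeping to ensure the coprimality condition on slope numerators holds uniformly over all even $d$ and that the remaining Newton polygon points indeed sit above the chosen segment.
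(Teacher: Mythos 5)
Your overall framework matches the paper's---invoke Lemma~\ref{lem:S_d-criterion} by exhibiting a $d$-cycle, a $(d-1)$- or $(d-2)$-cycle, and a transposition, detecting cycles via Newton polygons and transferring them to $\mathrm{Gal}(P(x,t)/\Q(t))$ by Corollary~\ref{cor:cycles}---but there are two genuine gaps. The more serious one is the transposition. It is \emph{not} sufficient for $\Disc_x(P(x,t))$ to fail to be a perfect square in $\Q[t]$: a polynomial such as $t^{2d-8}g(t)^3$ is not a square, yet no specialization $t_0$ produces a prime dividing its value to multiplicity exactly one, so Lemma~\ref{lem:trans-criterion} never applies. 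What you actually need is that $\Disc_x(P)=t^{2d-8}h(t)$ has an irreducible factor of multiplicity one which moreover can be taken distinct from $t$, i.e.\ that $h$ is not squarefull. A ``standard squarefree-sieve argument'' applied to $\Disc_x(P(x,t_0))$ also fails at the outset, since that quantity is never squarefree for $d\geq 5$ (the $t_0^{2d-8}$ factor is in the way); one must sieve the cofactor $h(t_0)$ while avoiding primes dividing $t_0$. The paper secures the missing non-squarefullness by arranging the model with $f\equiv(x\pm1)^3\pmod{p_3}$ (Lemma~\ref{lem:exists_w}(iii)) and computing $\Disc(x^{d-3}(x\pm1)^3\mp t^2)$ explicitly (Lemma~\ref{lem:disc-factor}), then finds $t_0$ by a Chebotarev argument rather than a sieve (Lemma~\ref{lem:exists-trans}).

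The second gap concerns the intermediate cycle. Scaling the model so that all of $v_p(B), v_p(C), v_p(D)$ are large is exactly what forces the Newton polygon to be a single segment, which gives the $d$-cycle, but once you have done this no choice of $t_0$ at the same prime can introduce a low vertex at an interior abscissa, so no segment of length $d-1$ or $d-2$ arises. (The scaling $x\mapsto u^{-2}x$ moves the valuations only in one direction at a given prime, and translations that could lower an interior valuation you have not introduced.) What is needed is a model with one small, controlled interior valuation: the paper uses Chebotarev to produce a prime $p_2$ and a translated model with $p_2\mid\mid D$ and $p_2\nmid C$ (Lemma~\ref{lem:exists_w}(ii)), so that the Newton polygon has a low vertex at abscissa $d-2$ in the odd case (and at $1$ in the even case), producing the shorter cycle for one specialization of $t$ while the endpoints still give the $d$-cycle for another (Lemma~\ref{lem:large-cycles}). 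Your ``companion segment of length $2$'' idea for a transposition is plausible in principle but is similarly obstructed once you have made the coefficients uniformly $p$-large; you would need a comparable arithmetic condition on the model, not merely $p$-adic largeness.
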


The first step is to construct a Weierstrass model for $E$ with various properties to be exploited later.

\begin{lemma}\label{lem:exists_w}
Given an elliptic curve $E/\Q$, an integer $a$, a real number $\alpha$, and any positive $\epsilon > 0$, there exists 
a rational Weierstrass model $E \colon y^2 = f(x) = x^3 + Bx^2 + Cx + D$ and distinct primes $p_1, p_2, p_3 \nmid 6d(d - 3)N_E$ satisfying the following properties:
\begin{enumerate}[(i)]
\item The coefficients $B, C, D$ are all in $\Z[\frac{1}{p_1}]$.
\item We have $p_2 \mid \mid D$ and $p_2 \nmid C$.
\item We have $f(x) \equiv (x + a)^3 \pmod{p_3}$.
\item The polynomial $f(x)$ is `close to' $(x + \alpha)^3$ in the Euclidean metric; namely, we have 
\[
|B - 3\alpha| < \epsilon, \ \ |C - 3 \alpha^2| < \epsilon, \ \ |D - \alpha^3| < \epsilon.
\]
\end{enumerate}
\end{lemma}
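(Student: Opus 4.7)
The plan is to start from an integral Weierstrass model $y^2 = f_0(x) = x^3 + B_0 x^2 + C_0 x + D_0$ of $E$ and exhibit a change of variables $(x, y) \mapsto (u^2 x + r, u^3 y)$ producing a new model with coefficients
\begin{equation*}
B = \frac{3r + B_0}{u^2}, \quad C = \frac{3r^2 + 2B_0 r + C_0}{u^4}, \quad D = \frac{f_0(r)}{u^6}.
\end{equation*}
I will choose three distinct primes $p_1, p_2, p_3$, all avoiding $6d(d-3) N_E \cdot \Disc(f_0)$, set $u := p_1^N/p_3$ for a large integer $N$ to be determined, and write $r = r'/p_3^2$ for an integer $r'$ to be chosen. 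Substituting and clearing denominators yields
\begin{align*}
B &= (3 r' + B_0 p_3^2)/p_1^{2N}, \\
C &= (3 {r'}^2 + 2 B_0 r' p_3^2 + C_0 p_3^4)/p_1^{4N}, \\
D &= ({r'}^3 + B_0 {r'}^2 p_3^2 + C_0 r' p_3^4 + D_0 p_3^6)/p_1^{6N}.
\end{align*}
The numerators are integers and the denominators are powers of $p_1$, so condition (i) is automatic.

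Turning to condition (iii), the numerators reduce modulo $p_3$ to $3r'$, $3{r'}^2$, and ${r'}^3$ respectively, so imposing $r' \equiv a p_1^{2N} \pmod{p_3}$ forces $(B, C, D) \equiv (3a, 3a^2, a^3) \pmod{p_3}$ and hence $f(x) \equiv (x+a)^3 \pmod{p_3}$. For condition (ii), I observe that the numerator of $D$ equals $p_3^6 f_0(r'/p_3^2)$ and the numerator of $C$ equals $p_3^4 f_0'(r'/p_3^2)$. I pick any integer $\xi_0$ with $f_0(\xi_0) \neq 0$ and any prime $p_2$ dividing $f_0(\xi_0)$ that is distinct from $p_1, p_3$ and coprime to $6d(d-3) N_E \cdot \Disc(f_0)$; by shifting $\xi_0$ by a multiple of $p_2$ if necessary, I arrange $v_{p_2}(f_0(\xi_0)) = 1$. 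The resultant identity $\gcd(f_0(\xi_0), f_0'(\xi_0)) \mid \Disc(f_0)$ then forces $p_2 \nmid f_0'(\xi_0)$. Setting $r_2' := p_3^2 \xi_0$ and imposing $r' \equiv r_2' \pmod{p_2^2}$ yields $v_{p_2}(D) = 1$ and $v_{p_2}(C) = 0$, establishing (ii).

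The two congruences on $r'$ combine via the Chinese remainder theorem into a single residue class modulo $M := p_2^2 p_3$, which contains a representative within distance $M/2$ of the real number $\alpha p_1^{2N} - B_0 p_3^2/3$. For such a choice of $r'$, a direct computation shows $|B - 3\alpha|, |C - 3\alpha^2|, |D - \alpha^3| = O(M/p_1^{2N})$, with implicit constant depending only on $E$ and $\alpha$. Since $M$ is fixed once $p_2, p_3$ are chosen, taking $N$ large enough drives all three errors below $\epsilon$, giving (iv).

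The main conceptual obstacle is the tension between conditions (iii) and (iv): the former demands a non-minimal Weierstrass model at $p_3$ whose reduction is cuspidal of the form $(x+a)^3$, forcing $u$ to carry a factor of $1/p_3$, while the latter demands that the archimedean size of the coefficients approximate those of the cuspidal cubic $(x+\alpha)^3$, forcing $|u|$ to be large. The specific choice $u = p_1^N/p_3$ reconciles these requirements by handling the $p_3$-adic and archimedean demands independently, and the Chinese remainder argument is the main technical ingredient that lets all four conditions be satisfied at once.
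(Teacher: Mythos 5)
Your proof is correct and follows essentially the same strategy as the paper's: build the desired model by a Weierstrass change of variables on an integral model, choosing the scaling $u$ (a power of $p_1$ divided by $p_3$) and the shift $r$ (constrained mod $p_2^2$, mod $p_3$, and archimedeanly) so that all four conditions hold at once. The paper packages this as three sequential substitutions rather than one, but the key mechanisms are identical — a simple root of $f_0$ modulo $p_2$ (forced by $p_2 \nmid \Disc$) gives (ii) via a Hensel-type lift, scaling by $p_3^{-2}$ with a shift congruent to $a$ gives (iii), and a large power of $p_1$ in the denominator of $u$ gives (i) and the archimedean approximation (iv).
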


\begin{proof}
We begin with (ii).
Starting
with an integral model $y^2 = g(x) := x^3 + ax + b$ for $E$,
upon substituting $x + r$ for $x$ we obtain a model
of the form
\begin{equation}\label{eq:nice_w_model}
y^2 = f_r(x) = x^3 + 3rx^2 + \big( 3r^2 + a \big) x + \big( r^3 + ar + b \big).
\end{equation}
By Chebotarev density, we may choose a prime $p_2 \nmid \Disc(g)$ and some $r \in \Z/p_2\Z$ for which $g(r) \equiv 0 \pmod {p_2}$ and $g'(r) = 3r^2 + a \not \equiv 0 \pmod {p_2}$.
Because $p_2 \nmid g'(r)$,
distinct lifts of $r \pmod {p_2^2}$ will yield distinct values of $g(r) \pmod{p_2^2}$, so we may choose a lift of $r$ to $\Z$ such that $f_r(x)$
satisfies (ii).

To also obtain (iii), let $p_3$ be any prime not dividing $6d(d-3)\Delta_E p_2$ and replace $f_r(x)$ with $\widetilde{f_r}(x) := p_3^6 f_r\big(\frac{x + a p_2^2 \overline{p_2}^2}{p_3^2}  \big)$, where $p_2 \overline{p_2} \equiv 1 \pmod{p_3^2}$.

Finally, let $p_1$ be any prime not dividing $6d(d-3)\Delta_E p_2p_3$.  Let $u \in \Z[\frac{1}{p_1}]$ be such that $p_2^2 p_3 \mid u$ and such that
 $|u^i - \alpha^i| < \frac{\epsilon}{4}$ for $i = 1, 2, 3$. Then, for a sufficiently large positive integer $k$,
$y^2 = p_1^{-6k} \widetilde{f_r}\big(p_1^{2k}(x + u)\big)$ is a Weierstrass model for $E$ satisfying all the stated properties.
\end{proof}

\begin{lemma}\label{lem:large-cycles}
Let $E/\mathbb{Q}$ be an elliptic curve with Weierstrass model in the form guaranteed by Lemma \ref{lem:exists_w}.  Then $P(x,t)$ is irreducible over $\mathbb{Q}(t)$.  Moreover, if $d$ is even, then the Galois group $\mathrm{Gal}(P(x,t))$ contains both a $d$-cycle and a $(d-1)$-cycle, while if $d$ is odd, it contains both a $d$-cycle and a $(d-2)$-cycle.
\end{lemma}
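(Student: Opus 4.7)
The plan is to derive both the irreducibility of $P(x,t)$ and the existence of the long cycles uniformly via $p_2$-adic Newton polygon computations at suitable rational specializations. For each required cycle type, I will exhibit a positive-proportion family of rationals $t_0$ (of the form $p_2^{\pm K} u$ with $u$ a $p_2$-adic unit) for which Lemma \ref{lem:newton} shows that $\mathrm{Gal}(P(x, t_0)/\Q)$ contains an element of that type; Corollary \ref{cor:cycles} then transfers this to $\mathrm{Gal}(P(x, t)/\Q(t))$. Irreducibility of $P(x, t)$ over $\Q(t)$ will come for free: the existence of a $d$-cycle in the specialized Galois group forces $P(x, t_0)$ to be irreducible over $\Q$ for a positive proportion of $t_0$, and a nontrivial factorization of $P(x, t)$ in $\Q[t, x]$ would specialize to one of every $P(x, t_0)$.

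The Newton polygon computations split by parity. For $d$ even, I will use the data $v_{p_2}(D) = 1$, $v_{p_2}(C) = 0$, and $v_{p_2}(B) \geq 0$ from Lemma \ref{lem:exists_w}. The choice $v_{p_2}(t_0) = -1$ should yield a Newton polygon for $P(x, t_0) = t_0^2 x^d - x^3 - Bx^2 - Cx - D$ with corners at $(0, 1), (1, 0)$, and $(d, -2)$, producing a length-$(d-1)$ segment of slope $-2/(d-1)$ and hence a $(d-1)$-cycle (since $d-1$ is odd and coprime to $2$). The choice $v_{p_2}(t_0) = -d/2$ should collapse the polygon to a single segment from $(0, 1)$ to $(d, -d)$ of slope $-(d+1)/d$, producing the $d$-cycle. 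For $d$ odd with $d \geq 5$, the same strategy applied to $P(x, t_0) = x^d + Bx^{d-1} + Cx^{d-2} + Dx^{d-3} - t_0^2$ at $v_{p_2}(t_0) = -1$ should give a single segment from $(0, -2)$ to $(d, 0)$ of slope $2/d$ (the $d$-cycle), while at $v_{p_2}(t_0) = 1$ it should give a length-$(d-2)$ segment of slope $-2/(d-2)$ (the $(d-2)$-cycle). For $d = 3$, a single segment of slope $2/3$ should arise at $v_{p_2}(t_0) = -1$ on $P(x, t_0) = x^3 + (B-1)x^2 + (C - 2t_0)x + (D - t_0^2)$, yielding the required $3$-cycle; the $(d-2) = 1$-cycle is trivial.

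The main technical task will be checking in each case that the intermediate lattice points lie on or above the putative segments --- a routine inequality verification that I expect to pose no serious obstacle, as the valuation constraints from Lemma \ref{lem:exists_w} give ample margin. The principal nuisance is that Lemma \ref{lem:newton} tacitly requires tame ramification for the $n$-cycle to appear in the local Galois group, which translates into the coprimality condition $p_2 \nmid n$ for each cycle length $n \in \{d, d-1, d-2\}$. I would handle this by strengthening the forbidden-primes condition in Lemma \ref{lem:exists_w} from $p_2 \nmid 6d(d-3) N_E$ to $p_2 \nmid 6d(d-1)(d-2)(d-3) N_E$; the proof of that lemma is otherwise unaffected.
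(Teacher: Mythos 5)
Your proposal follows exactly the paper's route: the same $p_2$-adic Newton polygon computations at the same specializations ($t_0 = p_2^{-d/2}$ and $t_0 = p_2^{-1}$ for $d$ even; $t_0 = p_2^{-1}$ and $t_0 = p_2$ for $d \geq 5$ odd), the same corner coordinates, and the same lift to $\mathrm{Gal}(P(x,t)/\Q(t))$ via Corollary \ref{cor:cycles}. The only expository difference is that you spell out the $d=3$ Newton polygon where the paper dismisses it as ``immediate.''

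One thing worth flagging: your observation about tameness is a genuine catch, not merely a nicety. Lemma \ref{lem:newton} does silently require $p \nmid n$ --- a totally ramified degree-$n$ extension of $\Q_p$ whose Galois closure has group $V_4$ (e.g.\ $\Q_2(\sqrt{2}, i)/\Q_2$ with $n = 4$, $p = 2$) gives an Eisenstein polynomial whose Newton polygon is a single segment of slope $-1/4$ but whose Galois group contains no $4$-cycle, so the conclusion of Lemma \ref{lem:newton} as stated can fail in the wild case. The paper's Lemma \ref{lem:exists_w} imposes $p_2 \nmid 6d(d-3)N_E$, which covers the $d$-cycle, but nothing prevents $p_2 \mid (d-1)$ or $p_2 \mid (d-2)$. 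Your fix --- enlarging the excluded set to $6d(d-1)(d-2)(d-3)N_E$ --- is the right one and costs nothing, since the Chebotarev argument in Lemma \ref{lem:exists_w} supplies infinitely many candidate primes.
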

\begin{proof}

Arguing separately for $d$ even and odd, we make various substitutions $t = t_0$ in $P_f(x, t)$, and inspect the resulting
Newton polygons over $\mathbb{Q}_p$ with $p = p_2$ as in Lemma \ref{lem:exists_w}(ii). We will conclude that 
$P_f(x, t_0)$ is irreducible over $\Q_p$ (and hence over $\Q$), and we will exhibit various cycles in the Galois group of $\mathbb{Q}(t)[x]/ (P(x, t)))$ over $\Q(t)$ thereby using Corollary \ref{cor:cycles}.

{\em $d \geq 4$ even}:  We consider two specializations, namely $t=p^{-d/2}$ and $t=p^{-1}$, from which we obtain a $d$-cycle and a $(d-1)$-cycle, respectively, using Lemma \ref{lem:newton} and Corollary \ref{cor:cycles}.  We present these two $p$-adic Newton polygons in turn.

\begin{center}
\begin{tikzpicture}[scale = 1.7]
    \draw[very thin,color=gray] (0,-1.6) [xstep = 1, ystep = 0.2] grid (8.0,0.2);    
    \draw (0, 0) -- (8, 0);
    \draw (0, -1.6) -- (8, 0.2); 
    \filldraw (0,-1.6) circle (0.05) node[left] {$(d,-d)$};
    \filldraw (8,0.2) circle (0.05) node[right] {$(0,1)$};
\end{tikzpicture}
Newton polygon over $\mathbb{Q}_p$ with $t = p^{-d/2}$: a $d$-cycle.
\end{center}

\begin{center}
\begin{tikzpicture}[scale = 1.7]
    \draw[very thin,color=gray] (0,-.4) [xstep = 1, ystep = 0.2] grid (8.0,0.2);    
    \draw (0, 0) -- (8, 0);
    \draw (0, -.4) -- (7, 0); 
    \draw (7, 0) -- (8, 0.2);
    \filldraw (0,-0.4) circle (0.05) node[left] {$(d,-2)$};
    \filldraw (7,0) circle (0.05) node[below right] {$(1,0)$};
    \filldraw (8,0.2) circle (0.05) node[right] {$(0,1)$};
\end{tikzpicture}
Newton polygon over $\mathbb{Q}_p$ with $t = p^{-1}$: a $(d - 1)$-cycle.
\end{center}

{\em $d = 3$}: Immediate.

{\em $d \geq 5$ odd}:  We take $t=p^{-1}$ and $t=p$, obtaining a $d$-cycle and a $(d-2)$-cycle, respectively, again using Lemma \ref{lem:newton} and Corollary \ref{cor:cycles}.
\begin{center}
\begin{tikzpicture}[scale = 1.7]
    \draw[very thin,color=gray] (0,-0.4) [xstep = 1, ystep = 0.2] grid (7.0,0);    
    \draw (0, 0) -- (7, 0);
    \draw (0, 0) -- (7, -0.4); 
    \filldraw (0,0) circle (0.05) node[left] {$(d,0)$};
    \filldraw (7,-0.4) circle (0.05) node[right] {$(0,-2)$};
\end{tikzpicture}

Newton polygon over $\mathbb{Q}_p$, with $t = p^{-1}$: a $d$-cycle. 
\end{center}

\begin{center}
\begin{tikzpicture}[scale = 1.7]
    \draw[very thin,color=gray] (0,0) [xstep = 1, ystep = 0.2] grid (7.0,0.4);    
    \draw (0, 0) -- (7, 0);
    \draw (2, 0) -- (7, 0.4); 
    \filldraw (0,0) circle (0.05) node[left] {$(d,0)$};
    \filldraw (2,0) circle (0.05) node[below] {$(d-2,0)$};
    \filldraw (7,0.4) circle (0.05) node[right] {$(0,2)$};
\end{tikzpicture}

Newton polygon over $\mathbb{Q}_p$, with $t = p$: a $(d - 2)$-cycle. 
\end{center}

This completes the proof.
\end{proof}

In view of Lemma \ref{lem:S_d-criterion}, to show that $\mathrm{Gal}(P(x,t)/\mathbb{Q}(t)) \simeq S_d$, it remains to show that the Galois group contains a transposition.  The key is the following computation.  We also recall from Corollary \ref{cor:sign} that to control the root numbers of these twists, we wish to control the sign of the discriminant of $P$.  We subsume the proof that we may do so into the following lemma.

\begin{lemma}\label{lem:disc-factor}
Given $E$, there exists a Weierstrass model of $E$ of the form given in Lemma \ref{lem:exists_w}, such that with $P_f(x, t)$ defined as in \eqref{eq:Pf_def},
the discriminant of $P_f$ (taken in the variable $x$)
is a non-squarefull polynomial in $t$ that assumes both positive and negative values in the interval $|t| \leq 1$.  This discriminant is of degree $4$ when $d = 3$ and is otherwise of the form
\[
\Disc(P_f) = t^{2d - 8} h(t)
\]
for a non-squarefull polynomial $h(t)$ of degree $6$.
\end{lemma}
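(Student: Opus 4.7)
The plan is to compute $\Disc_x(P_f)$ directly using Lemma \ref{lem:disc-res}, establishing the factorization $t^{2d-8}h(t)$ with the claimed degree of $h$, and then use a perturbation argument around $f = (x+\alpha)^3$ (condition (iv) of Lemma \ref{lem:exists_w}) to show that $h$ has a simple real root in $(-1, 1)$, which simultaneously gives non-squarefullness and the sign change.

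For the structural computation, the three cases must be handled separately. For $d = 3$, $P_f(x, t) = x^3 + (B-1)x^2 + (C-2t)x + (D - t^2)$ is a cubic in $x$ whose discriminant, computed directly from the standard cubic formula, has degree $4$ in $t$. For $d \geq 5$ odd, $P_f'(x, t) = x^{d-4}g(x)$ with $g(x) := dx^3 + (d-1)Bx^2 + (d-2)Cx + (d-3)D$ independent of $t$; combined with $\mathrm{Res}_x(P_f, x) = \pm t^2$ and the multiplicativity of the resultant, this gives $\Disc_x(P_f) = \pm t^{2d-8}\mathrm{Res}_x(P_f, g)$, where $\mathrm{Res}_x(P_f, g) = \prod_{g(\gamma)=0}(\gamma^{d-3}f(\gamma) - t^2)$ is visibly a degree-$3$ polynomial in $t^2$. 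For $d \geq 4$ even, the identity $\alpha P_f'(\alpha) = df(\alpha) - \alpha f'(\alpha) = g_1(\alpha)$ for $\alpha$ a root of $P_f$---with $g_1(x) := (d-3)x^3 + (d-2)Bx^2 + (d-1)Cx + dD$---together with $\prod_\alpha \alpha = -D/t^2$ and the resultant identities of \eqref{eq:result}, yields $\Disc_x(P_f) = \pm (t^{2d-8}/D)\mathrm{Res}_x(P_f, g_1)$, with $\mathrm{Res}_x(P_f, g_1) = \prod_{g_1(\gamma)=0}(t^2\gamma^d - f(\gamma))$ again a degree-$3$ polynomial in $t^2$. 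In both cases the leading $t^6$-coefficient of $h(t)$ is a nonzero power of $d$ (times a power of $D$ in the even case), so $h$ has degree exactly $6$ by Lemma \ref{lem:exists_w}(ii).

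For the remaining claims, the key observation is that when $f = (x+\alpha)^3$, a direct substitution yields the factorizations $g(x) = d(x+\alpha)^2\bigl(x + \tfrac{d-3}{d}\alpha\bigr)$ and $g_1(x) = (d-3)(x+\alpha)^2\bigl(x + \tfrac{d}{d-3}\alpha\bigr)$. Substituting the simple root into the product formula for $h(t)$ yields a degree-$1$ factor in $t^2$ of the form $t^2 - \tfrac{27(d-3)^{d-3}}{d^d}\alpha^d$ (odd case) or $\tfrac{d^d \alpha^d}{(d-3)^d}t^2 + \tfrac{27\alpha^3}{(d-3)^3}$ (even case), while the squared factor $(x+\alpha)^2$ contributes a $t^4$ factor at the limit. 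This degree-$1$ factor has two simple real roots $\pm t_\alpha \in (-1, 1)$ provided $\alpha$ is chosen of the appropriate sign (positive for $d$ odd, negative for $d$ even) and of suitable magnitude (for example, any $\alpha > 0$ with $|\alpha|^d < d^d/(27(d-3)^{d-3})$ works in the odd case). Under a small real perturbation $f = (x+\alpha)^3 + \widetilde{f}(x)$ allowed by Lemma \ref{lem:exists_w}(iv), the four near-zero roots of $h$ perturb slightly but remain separated from $\pm t_\alpha$, so the latter persist as simple real roots of $h(t)$ in $(-1, 1)$. This simultaneously shows that $h$ changes sign on $[-1, 1]$ and that $h$ has a simple linear factor, hence is not squarefull. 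For $d = 3$, the analogous direct computation using $y = x + \alpha$ gives $\Disc_x(P_f)(t) = -(t-\alpha)^3\bigl(4 + 27(t - \alpha)\bigr)$ when $f = (x+\alpha)^3$, exhibiting a simple real root at $t = \alpha - 4/27 \in (-1, 1)$ for small $|\alpha|$, which persists under small perturbations of $f$.

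The main technical obstacle is verifying carefully that the four near-zero roots of $h$ remain distinct from $\pm t_\alpha$ (so that the latter remain simple) and that the sign and magnitude of $\alpha$ can be chosen consistently with the $p$-adic constraints (i)--(iii) of Lemma \ref{lem:exists_w}. Both are routine: the former follows from continuity of the roots in the coefficients of $f$, and the latter from the substantial independent freedom at the Archimedean place afforded by condition (iv).
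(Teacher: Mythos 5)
Your proposal is correct, and the structural computation via Lemma \ref{lem:disc-res} and the product formula for $\Disc(P_f)$ is essentially the same as the paper's. However, your mechanism for establishing non-squarefullness is genuinely different from the paper's. You use only the Archimedean condition (iv) of Lemma \ref{lem:exists_w}: you exhibit the two simple real roots $\pm t_\alpha$ of $h$ when $f = (x+\alpha)^3$ exactly (the quadruple factor at $t=0$ plus a degree-one factor in $t^2$), argue by continuity of roots in coefficients that these persist as simple roots in $(-1,1)$ under a small Euclidean perturbation of $f$, and observe that a simple root forces a multiplicity-one irreducible factor over $\Q$. This cleverly unifies the non-squarefullness claim with the sign-change claim into a single perturbation argument. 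The paper instead uses the $p$-adic condition (iii): with $f \equiv (x\pm 1)^3 \pmod{p_3}$, it computes $\Disc(P_f) \pmod{p_3}$ explicitly (equation \eqref{eqn:disc-1}), checks that the reduction is not squarefull over $\mathbb{F}_{p_3}$ (using $p_3 \nmid 6d(d-3)$ to ensure the quadratic factor has simple roots), and notes that a squarefull polynomial over $\Q$ reduces to a squarefull one modulo any prime preserving the degree. The paper's argument is purely algebraic and requires no tracking of root locations, whereas yours requires verifying (as you acknowledge) that the simple roots remain in $(-1,1)$ and stay separated from the four near-zero roots; but both are valid. There is also a minor difference in the even-$d$ case: the paper observes that $\Disc(P_f) = \Disc(x^d P_f(1/x))$ and that the reciprocal polynomial has the same shape as the odd-$d$ $P_f$, reducing to the odd case, while you carry out the resultant computation directly with $g_1(x) = (d-3)x^3 + (d-2)Bx^2 + (d-1)Cx + dD$ and $\prod_\alpha \alpha = -D/t^2$; both give the same answer, with your version requiring the observation that $D \neq 0$ (from condition (ii)) to ensure the leading coefficient $\pm d^d D^{d-1}$ of $h$ does not vanish.
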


\begin{proof}
We consider first the case that $d \geq 5$ is odd.  In this case, $P_f$ is monic and its discriminant is found via Lemma \ref{lem:disc-res} by taking the resultant of $P_f$ with its derivative $P_f^\prime$; namely, we have
\[
\mathrm{Disc}(P_f) 
	= (-1)^{(d-1)/2} d^d \prod_{\beta: P_f^\prime(\beta)=0} P_f(\beta)
\]
where the roots are taken with multiplicity.  For any Weierstrass model $y^2 = f(x)$ of $E$, we have $P_f^\prime = x^{d-4}[(d-3)f(x) + x f^\prime(x)] =: x^{d-4}g(x)$ for some cubic polynomial $g \in \mathbb{Q}[x]$.  Thus, $x=0$ is a root of $P_f^\prime$ with multiplicity $d-4$, and we conclude
\[
\mathrm{Disc}(P_f) = (-1)^{(d+1)/2}d^d t^{2d-8} \prod_{\beta: g(x) = 0} (\beta^{d-3}f(\beta) - t^2) = (-1)^{(d-1)/2} d^d t^{2d-8} h(t)
\]
for some monic degree $6$ polynomial $h \in \mathbb{Q}[t]$.  Choosing $f(x) \equiv (x + 1)^3 \pmod{p_3}$ in Lemma \ref{lem:exists_w}(iii), 
we have $P_f \equiv x^{d-3}(x+1)^3 - t^2 \pmod{p_3}$ and $\Disc(P_f) \equiv \Disc(x^{d-3}(x+1)^3 - t^2) \pmod{p_3}$.   By an argument with resultants similar to the above, we find
\begin{equation}\label{eqn:disc-1}
\mathrm{Disc}(x^{d-3}(x+1)^3 - t^2) = (-1)^{(d-1)/2} t^{2d-4} (d^d t^2 - 27 (d-3)^{d-3}),
\end{equation}
which is not squarefull when reduced $\pmod{p_3}$.  Thus, $\mathrm{Disc}(P_f)$ cannot be squarefull.  

To ensure that $\mathrm{Disc}(P_f)$ assumes both positive and negative values in the interval $|t|\leq 1$, 
choose $f$ close to $(x + 1)^3$ in the Euclidean topology, by Lemma \ref{lem:exists_w}(iv).
 As $\mathrm{Disc}(x^{d-3}(x+1)^3-t^2)$ visibly has the desired property thanks to \eqref{eqn:disc-1}, so does $P_f(x,t)$ by continuity.
 
 \smallskip
 
In the case that $d\geq 4$ is even, we exploit the fact that the discriminant of a polynomial and its reciprocal polynomial are the same, i.e. $\mathrm{Disc}(P_f(x)) = \mathrm{Disc}(x^d P_f(1/x))$.  The polynomial $x^d P_f(1/x)$ is of essentially the same form as the polynomials $P_f(x)$ for $d$ odd, and exactly the same argument shows that $\mathrm{Disc}(x^d P_f(1/x)) = t^{2d-8} h(t)$ for some sextic polynomial $h$.

To show that $\mathrm{Disc}(P_f)$ is not squarefull, choose $f(x) \equiv (x-1)^3 \pmod{p_3}$. As $P_f \equiv t^2 x^d - (x-1)^3 \pmod{p_3}$ and
\[
\mathrm{Disc}(t^2x^d - (x-1)^3) = \mathrm{Disc}(x^{d-3}(x-1)^3 + t^2)  = (-1)^{d/2}t^{2d-4}(d^dt^2 -27 (d-3)^{d-3}),
\]
it follows as in the odd case that $\mathrm{Disc}(P_f)$ is not squarefull. Similarly, by choosing $f$ close to $(x - 1)^3$ in the Euclidean topology, we ensure that $\mathrm{Disc}(P_f)$ assumes both positive and negative values in the interval $|t|\leq 1$.

Finally, if $d=3$, $P_f(x,t) = f(x) - (x+t)^2$ and $\mathrm{Disc}(P_f)=h(t)$ is a degree four polynomial in $t$.  
Choose a Weierstrass model for $f$ close, in $\mathbb{R}$, to $y^2 = x^3$; since  $\mathrm{Disc}(x^3-(x+t)^2) = -t^3(27t+4)$, $h(t)$ will assume positive and negative values inside $|t|\leq 1$.
Since a squarefull degree polynomial of degree four is either a square or a fourth power, this also proves that $h(t)$ is not squarefull.
\end{proof}

We are now ready to argue that the Galois group of $K$ contains a transposition.

\begin{lemma}\label{lem:exists-trans}
Let $E/\mathbb{Q}$ be an elliptic curve with Weierstrass model given by Lemma \ref{lem:disc-factor}.  Then $\mathrm{Gal}(P_f(x,t)/\mathbb{Q}(t))$ contains a transposition in its natural permutation representation.
\end{lemma}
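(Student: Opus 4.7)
The plan is to find an integer specialization $t = t_0$ at which Lemma~\ref{lem:trans-criterion} applies to $P_f(x, t_0)$, and then transport the resulting transposition back to $\Gal(P_f(x,t)/\Q(t))$ via Theorem~\ref{thm:hit}. To begin I would set $D(t) := \Disc_x P_f(x, t) \in \Z[t]$; by Lemma~\ref{lem:disc-factor}, $D$ is not squarefull, so I can factor $D(t) = c \prod_i q_i(t)^{a_i}$ with distinct primitive irreducibles $q_i \in \Z[t]$ and, after relabeling, $a_1 = 1$.

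The central step will be to choose a prime $p$ together with a residue $r \pmod p$ satisfying several compatible local conditions: $r$ is a simple root of $q_1 \pmod p$ with $q_i(r) \not\equiv 0 \pmod p$ for $i \geq 2$ and $r \not\equiv 0 \pmod p$, while $p$ avoids the finite set of primes dividing $c$, $\Disc(q_1)$, the resultants $\mathrm{Res}(q_1, q_i)$ for $i \geq 2$, and the auxiliary primes $p_1, p_2, p_3$ from Lemma~\ref{lem:exists_w}. The existence of such a pair $(p, r)$ follows from Chebotarev density applied to the splitting field of $q_1$. A Taylor-expansion argument exploiting $q_1'(r) \not\equiv 0 \pmod p$ then shows that $v_p(D(t_0)) = 1$ for a positive density of integers $t_0 \equiv r \pmod p$; by construction $p$ will not divide the leading coefficient of $P_f(x, t_0)$ in $x$, which is $t_0^2$ when $d$ is even and $1$ otherwise.

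To conclude, I would invoke Hilbert irreducibility (Theorem~\ref{thm:hit}) to select $t_0$ in this residue class for which the Galois-closure generator $g(t_0, x)$ is also irreducible over $\Q$; since $P_f(x, t)$ is irreducible over $\Q(t)$ by Lemma~\ref{lem:large-cycles}, the resulting isomorphism of permutation representations will force $P_f(x, t_0)$ to be irreducible over $\Q$. Lemma~\ref{lem:trans-criterion} will then yield a transposition in $\Gal(P_f(x, t_0)/\Q)$, which transports back under the same isomorphism to a transposition in $\Gal(P_f(x, t)/\Q(t))$, as required. The main subtlety is the compatibility of the local conditions in the middle step: one must verify that a single pair $(p, r)$ satisfies every constraint at once. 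Since each constraint is either a finite exclusion or a positive-density Chebotarev hypothesis, this is routine, and no new ingredient beyond Theorem~\ref{thm:hit}, Lemma~\ref{lem:trans-criterion}, and Lemma~\ref{lem:disc-factor} is needed.
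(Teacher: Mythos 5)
Your argument follows the same strategy as the paper's proof: use the non-squarefull discriminant from Lemma \ref{lem:disc-factor} to locate a multiplicity-one irreducible factor, apply Chebotarev to find primes $p$ and residues $t_0 \bmod p$ with $p\mid\mid \mathrm{Disc}(P_f(x,t_0))$, invoke Lemma \ref{lem:trans-criterion} to get a transposition in the specialized Galois group, and transport it back to $\Gal(P_f(x,t)/\Q(t))$. The only presentational difference is that you carry out the transfer by selecting a single good $t_0$ via Theorem \ref{thm:hit} directly, whereas the paper observes that the transpositions occur for a positive proportion of $t_0$ and then invokes Corollary \ref{cor:cycles}; these are the same argument packaged differently.

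The one step you gloss over is the claim that the local conditions are simultaneously satisfiable: your constraint list requires $r$ to be a simple root of $q_1 \bmod p$ while also having $r \not\equiv 0 \pmod p$, which is impossible if the chosen multiplicity-one factor $q_1$ happens to equal $t$. Since $\mathrm{Disc}(P_f) = t^{2d-8}h(t)$, for $d\geq 5$ the factor $t$ already has multiplicity at least $2$ in $D$, so $q_1 \neq t$ is automatic; but for $d = 3,4$ (where $\mathrm{Disc}(P_f) = h(t)$) one genuinely needs to know that $h$ has a multiplicity-one irreducible factor different from $t$. The paper addresses this explicitly by pointing back to the congruence $\Disc(P_f) \equiv t^{2d-4}(d^dt^2 - 27(d-3)^{d-3}) \pmod{p_3}$ (and the analogous computation for $d=3$) from the proof of Lemma \ref{lem:disc-factor}, which exhibits a multiplicity-one factor coprime to $t$. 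You should add a sentence verifying this, but the rest of your proof is sound.
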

\begin{proof}
As expected, we use Lemma \ref{lem:trans-criterion}.  If $E$ is given by a Weierstrass model of the form given by Lemma \ref{lem:disc-factor}, then $P_f(x,t)$ is irreducible and $\mathrm{Disc}(P_f) = t^{2d-8} h(t)$ for some non-squarefull polynomial $h(t) \in \mathbb{Z}[t]$ of degree $6$, or degree $4$ in the special case $d=3$.  Since $h(t)$ is not squarefull, it admits an irreducible factor $h_0(t)$ of multiplicity one.  Moreover, the proof of Lemma \ref{lem:disc-factor} shows that we may take $h_0(t) \neq t$.  If we write $h(t) = h_0(t) h_1(t)$, then only finitely many primes divide the resultant $\mathrm{Res}(h_0(t), th_1(t))$.  By the Chebotarev density theorem, there are infinitely many primes $p$ for which $h_0(t)$ admits a root.  Let $p$ be such a prime for which $p\nmid \mathrm{Disc}(h_0(t))$ and $p\nmid \mathrm{Res}(h_0(t),th_1(t))$.  By the definition of the resultant, we may thus find an integer $t_0$ for which $p\mid\mid h_0(t_0)$ and $p\nmid t_0 h_1(t_0)$.  Thus, $p\mid\mid \mathrm{Disc}(P_f(x,t_0))$ and $\mathrm{Gal}(P_f(x,t_0))$ contains a transposition by Lemma \ref{lem:trans-criterion}.  In particular, this construction shows that $\mathrm{Gal}(P_f(x,t_0)/\mathbb{Q})$ has a transposition for a positive proportion of $t_0 \in \mathbb{Q}$, which by Corollary \ref{cor:cycles} implies that $\mathrm{Gal}(P_f(x,t)/\mathbb{Q}(t))$ must also contain a transposition.
\end{proof}

Combining Lemmas \ref{lem:large-cycles} and \ref{lem:exists-trans} with Lemma \ref{lem:S_d-criterion}, we conclude Proposition \ref{prop:Sd}.

\section{Disambiguation via discriminants and small degree fields}\label{sec:small-degree}

The main point of this section is to establish the following theorem, which forms part of our main theorem.  At the end of this section, 
we then tweak the proof to obtain a proof of Theorem \ref{thm:analytic}.

\begin{theorem}
\label{thm:small-degree}
Let $E/\mathbb{Q}$ be an elliptic curve let $d\geq 3$ be an integer.  There is a constant $c_d>0$ such that for each $\varepsilon =\pm 1$, there are $\gg X^{c_d - \epsilon}$ fields $K\in\mathcal{F}_d(X)$ with $w(E,\rho_K)=\varepsilon$ and $\mathrm{rk}(E(K)) > \mathrm{rk}(E(\mathbb{Q}))$.  In particular, we may take
\[
c_d = \left\{ \begin{array}{ll} 1/3, & \text{if } d=3, \\ 1/4, & \text{if } d=4, \text{ and} \\ (\lceil \frac{d}{2}\rceil +2)^{-1}, & \text{if } d\geq 5. \end{array} \right.
\]
\end{theorem}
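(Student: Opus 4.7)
The plan is to construct many distinct $S_d$-extensions $K$ as specializations $K_{t_0} := \Q[x]/(P_f(x,t_0))$ of the polynomial family from Section~\ref{sec:galois}, bound their discriminants, and finally separate them by the sign of the root number.

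First, fix a Weierstrass model $y^2 = f(x)$ from Proposition~\ref{prop:Sd} and Lemma~\ref{lem:disc-factor}, so that $\mathrm{Gal}(P_f/\Q(t)) \simeq S_d$ and $\mathrm{Disc}_x(P_f) = t^{2d-8}h(t)$ with $h$ non-squarefull of degree $6$ (or $h(t)$ of degree $4$ when $d=3$), and both signs represented on $|t|\leq 1$. For a parameter $T$ to be optimized, I would range $t_0$ over the integers in $[T,2T]$. By Hilbert irreducibility (Theorem~\ref{thm:hit}), outside a thin exceptional set of size $o(T)$, $P_f(x,t_0)$ is irreducible over $\Q$ with Galois group $S_d$, so $K_{t_0}\in\mathcal{F}_d$. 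By the root structure built into the definition~\eqref{eq:Pf_def}, $E$ gains an explicit new point over each $K_{t_0}$, and Lemma~\ref{lem:fractional-points} promotes this to a strict rank increase in all but $O_{E,d}(1)$ cases. Since $P_f$ has degree $2$ in $t$, at most $O_d(1)$ values of $t_0$ can yield any particular field $K$, so the construction produces $\gg T$ distinct fields with rank growth.

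The central obstacle is the discriminant bound $|\mathrm{Disc}(K_{t_0})| \ll T^{1/c_d}$. The naive estimate $|\mathrm{Disc}(K_{t_0})| \leq |\mathrm{Disc}_x P_f(x,t_0)| \ll T^{2d-2}$ is far too weak, and one must harvest cancellation from the index $[\mathcal{O}_{K_{t_0}}:\Z[\alpha]]^2$. The first cancellation is automatic: since $t^{2d-8}$ is an even power, a $p$-adic Newton polygon analysis in the spirit of Lemma~\ref{lem:newton} at each $p\mid t_0$ shows that this factor is absorbed entirely into the index, leaving $|\mathrm{Disc}(K_{t_0})| \ll |h(t_0)| \ll T^6$, which already suffices when $c_d \leq 1/6$, hence for $d\geq 7$ in the stated range. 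For smaller $d$ a further positive power of $t_0$ must be extracted from the index; the natural source is the explicit integral element $\beta := t_0\alpha^{(d-3)/2}$ (for $d$ odd, with the analogous element for $d$ even) satisfying $\beta^2 = f(\alpha)$ and representing the $y$-coordinate of the $E$-point built into $K_{t_0}$. Its minimal polynomial has discriminant smaller than that of $P_f$ by an additional power of $t_0$, reflecting the structure $P_f\in\Z[x,t^2]$ when $d\neq 3$; working this through case by case is expected to yield $c_d = 1/d$ for $d=3,4,5$ and $c_d=1/(\lceil d/2\rceil+2)$ for $d=6$. For $d=3$ a cleaner alternative is to apply a squarefree sieve directly to the non-squarefull quartic $h(t_0)$ guaranteed by Lemma~\ref{lem:disc-factor}, extracting a positive-density subset of $t_0\in[T,2T]$ for which the squarefree part of $h(t_0)$ is $\ll T^3$.

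Finally, to prescribe the root number $\varepsilon = \pm 1$: by Lemma~\ref{lem:disc-factor}, $\mathrm{Disc}_x(P_f)(t)$ takes both signs on $|t|\leq 1$, and hence on subintervals of $[T,2T]$ of length $\asymp T$. Within each such subinterval, restricting $t_0$ to a single residue class modulo a large enough power of $N_E$ pins down the isomorphism class of $K_{t_0}\otimes\Q_p$ for every $p\mid N_E$ by Hensel's lemma, and Corollary~\ref{cor:sign} guarantees that the root number flips with the sign of $\mathrm{Disc}(K_{t_0})$. This delivers $\gg T = X^{c_d}$ fields with $w(E,\rho_{K_{t_0}}) = \varepsilon$ for each choice of sign, completing the argument. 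The hard part is clearly the discriminant bound: the polynomial-discriminant estimate falls short by several polynomial factors of $T$ in the small-$d$ regime, and those savings must be extracted by a careful local analysis of the ring of integers of $K_{t_0}$ and, for $d=3$, by an honest squarefree sieve.
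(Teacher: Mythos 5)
The central gap is in the parametrization: you specialize $t$ only to \emph{integers} $t_0 \in [T,2T]$, which yields $\asymp T$ candidate fields, whereas the paper specializes $t$ to \emph{rationals} $t_0 = u/v$ with $|u|,|v| \leq U$, yielding $\asymp U^2$ candidates. This second degree of freedom is what makes the stated exponents achievable. With a one-parameter family you would need the discriminant bound $|D_{K_{t_0}}| \ll T^{1/c_d}$, but your claim that ``the $t^{2d-8}$ factor is absorbed entirely into the index, leaving $|\mathrm{Disc}(K_{t_0})| \ll |h(t_0)| \ll T^6$'' is simply false once $d \geq 7$. Indeed, for $d$ odd and $p \| t_0$ with $p \nmid h(t_0)$, the $p$-adic Newton polygon of $x^{d-3}f(x) - t_0^2$ has a segment of slope $-2/(d-3) = -1/\tfrac{d-3}{2}$, so the primes above $p$ are ramified with index $\tfrac{d-3}{2}$, and one computes $v_p(D_K) = 2\bigl(\tfrac{d-3}{2}-1\bigr) = d-5$. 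Thus $p^{d-5}$ genuinely divides $D_K$, not the index, and for $t_0$ squarefree one only gets $|D_{K_{t_0}}| \ll T^{d-5+6} = T^{d+1}$, giving $c_d \leq 1/(d+1)$ from the one-parameter construction --- strictly worse than the stated $(\lceil d/2\rceil+2)^{-1}$ for all $d \geq 5$. Even for $d=3$, your suggested squarefree sieve on the univariate $h(t_0)$ gives $c_3 = 1/4$ rather than the stated $1/3$; it is precisely the bivariate version, with $u/v$ substituted and the binary quartic $H(u,v) = v^4 h(u/v)$ in play, that gets $1/3$.

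The second missing ingredient is the mechanism for counting distinct fields. Your assertion that ``at most $O_d(1)$ values of $t_0$ can yield any particular field'' is unjustified and doesn't obviously hold; the paper instead passes through the binary form $H(u,v)$: by Greaves' theorem (Lemma~\ref{lem:greaves}) a positive proportion of pairs $(u,v)$ make $H(u,v)$ squarefree, this squarefree value equals (up to bounded factors) the squarefree part of $D_K$, and Lemma~\ref{lem:multiplicity} bounds by $O(X^\epsilon)$ the number of $(u,v)$ in the box giving any fixed value of $H$. This chain, rather than an a priori multiplicity bound on $t_0 \mapsto K_{t_0}$, is what separates the constructed fields. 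Your treatment of the root number via Corollary~\ref{cor:sign} and the sign of $\mathrm{Disc}$ is conceptually on target, as is the appeal to Lemma~\ref{lem:fractional-points} to promote the constructed point to positive rank growth, but as written the core discriminant argument doesn't close.
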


Recall that Proposition \ref{prop:Sd} yielded a Weierstrass model $y^2 = f(x)$ of $E$ and a polynomial $P_f(x, t)$ of \eqref{eq:Pf_def} defining an $S_d$-extension of $\Q(t)$, such that
each specialization $t = t_0 \in \mathbb{Q}$ yields a point on $E(K)$ with $K := \mathbb{Q}[x]/(P_f(x, t_0))$. 

We will choose specializations $t_0 = u/v$ where $u$ and $v$ range over integers in a suitably sized box.
The next two lemmas, applied to a homogenization of the polynomial $h(t)$ from Lemma \ref{lem:disc-factor},
will be used to show that the discriminants of the $P_f(x, u/v)$, as polynomials in $x$, represent many different square classes in $\Q^{\times} / (\Q^{\times})^2$ --
and hence that these polynomials generate many different field extensions.

\begin{lemma}[Greaves]
\label{lem:greaves}
Let $F(u,v)$ be an integral binary form with each irreducible factor of degree $\leq 6$.  Let $M\geq 1$ be a fixed positive integer and let classes $a,b\pmod{M}$ be chosen so that $F(u,v)$ does not admit a constant square factor whenever $u\equiv a\pmod{M}$ and $v \equiv b \pmod{M}$. Let $\Omega \subset [-1,1]^2$ be a smooth domain with volume $\mathrm{vol}(\Omega)$ and for any $U>1$, let $U\cdot\Omega$ denote the dilation of $\Omega$ by $U$.  Then there is a positive constant $c_F$, depending on $M$ but independent of $\Omega$, for which
\begin{equation}\label{eq:greaves}
\#\{u,v \in U\cdot \Omega : (u, v) \equiv (a, b) \ (\textmod \ M), F(u,v) \text{ squarefree}\} = c_F \mathrm{vol}(\Omega) U^2 + O\left(\frac{U^2}{(\log U)^{1/3}}\right).
\end{equation}
\end{lemma}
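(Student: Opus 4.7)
The plan is to reduce the statement to Greaves' classical squarefree sieve for binary forms with irreducible factors of degree at most $6$, adapted to incorporate the congruence condition and the smooth domain. The starting point is the standard M\"obius inversion $\mu^2(F(u,v)) = \sum_{k^2 \mid F(u,v)} \mu(k)$, so that the left side of \eqref{eq:greaves} becomes $\sum_k \mu(k) N_k(U)$, where $N_k(U)$ counts pairs $(u,v) \equiv (a,b) \pmod{M}$ in $U\cdot\Omega$ satisfying $k^2 \mid F(u,v)$. The game is then to split this sum at judiciously chosen thresholds and bound each range separately.

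For the ``easy'' range $k \leq U^{1/3}$, a direct lattice point count in residue classes modulo $\mathrm{lcm}(M,k^2)$ yields
\[
N_k(U) = \rho(k)\,\mathrm{vol}(\Omega)\,U^2/k^2 + O(U\rho(k)/k),
\]
where $\rho(k)$ is a multiplicative function counting solutions modulo $k^2$ compatible with the mod $M$ conditions. Summing against $\mu(k)$ produces the main term $c_F\,\mathrm{vol}(\Omega)\,U^2$, with $c_F = \sum_k \mu(k)\rho(k)/k^2$ a convergent Euler product that is positive precisely because of the hypothesis that $F(u,v)$ has no constant square factor on the prescribed residue class. For the intermediate range $U^{1/3} < k \leq U^{1-\delta}$, I would invoke Nair-type upper bounds on $N_k(U)$, exploiting that each irreducible factor of $F$ has degree at most $6$ to control the number of roots modulo prime powers via Hensel lifting; the contribution comes out to be $O(U^2/(\log U)^{1/3})$.

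The main obstacle is the tail range $k > U^{1-\delta}$, where $k^2$ is comparable to or exceeds the typical size $|F(u,v)| \asymp U^{\deg F}$ and trivial divisor bounds fail. This is precisely the regime treated by Greaves' theorem: for binary forms with irreducible factors of degree at most $6$, he reparametrizes large square divisors through auxiliary binary forms of smaller height and executes a delicate divisor-switching argument to bound the tail by $O(U^2/(\log U)^{1/3})$. It is exactly this degree $\leq 6$ hypothesis that is responsible for the restriction in the statement. Adapting his estimates to the smooth cutoff $\Omega$ is routine since his bounds depend only on the size of the box, and incorporating the congruence conditions is handled by running his sieve separately on each residue class modulo $M$; the combinatorial heart of the argument is imported directly from Greaves' paper. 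Combining the three ranges yields the claimed asymptotic.
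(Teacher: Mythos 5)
Your proposal is correct in spirit and takes essentially the same route as the paper: cite Greaves for the hard part (the tail estimate for large square divisors), and observe that the congruence conditions and the smooth domain $\Omega$ only affect the ``easy'' inclusion-exclusion computation of the main term. The paper's proof makes two points slightly more cleanly than you do: (i) Greaves actually splits $N(U)$ as a principal term $N'(U)$ (those $(u,v)$ for which $F(u,v)$ has no square factor $p^2$ with $p \leq \frac{1}{3}\log x$) plus a tail, rather than the three-range decomposition in $k$ you describe; and (ii) the reason the tail estimate can be quoted verbatim from Greaves is monotonicity---the count of bad pairs inside $U\cdot\Omega \subseteq [-U,U]^2$ is bounded by the count over the full box---so no adaptation of the combinatorial heart is needed at all. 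Your sketch of the ranges and thresholds ($k \leq U^{1/3}$, Nair bounds, etc.) doesn't exactly match Greaves' actual argument, but since you are treating that as a black box anyway, this imprecision doesn't affect the validity of the reduction. One small item worth adding: Greaves states his theorem for $(0,1]^2$, and the extension to $[-1,1]^2$ is handled by considering $F(\pm u, \pm v)$ before restricting to $\Omega$.
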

\begin{proof}
This is essentially the main theorem of \cite{Greaves1992}, which is stated in the slightly simpler case $\Omega = (0 ,1]^2$. The result is easily extended to 
$\Omega = [-1, 1]^2$ by considering $F(\pm u, \pm v)$. Greaves's proof is then easily modified as follows: 

Writing $N(U)$
for the quantity in \eqref{eq:greaves}, Greaves
writes
\[
N(U) = N'(U) + O(E(U)),
\]
where the `principal term' $N'(U)$ counts those $(u, v)$ such that $F(u, v)$ has no square factor $p^2$ with $p \leq \frac{1}{3} \log(x)$,
and where the `tail estimate' $E(U)$ is an error term. 

The quantity $N'(U)$ is easily estimated using inclusion-exclusion and the geometry of numbers,
and these methods extend immediately when $[-1, 1]^2$ is replaced with a more general $\Omega$. Meanwhile, the tail estimate for $\Omega$ is bounded by that for
$[-1, 1]^2$, and thus the error term may be quoted from \cite{Greaves1992} without change.
\end{proof}

 \begin{remark}
With a further generalization of Lemma \ref{lem:greaves} to skew boxes, we could improve our main result for small $d$.
For example, when $d = 3$, we have $\Disc(K) \mid v^2 H(u, v)$ for a quartic form $H$, and we would improve our results if we could replace 
$U \cdot \Omega$ with a region
approximating $[-X^{1/4}, X^{1/4}] \times [- X^{1/6}, X^{1/6}]$.
 \end{remark}

\begin{lemma} \label{lem:multiplicity}
Let $F(u,v)$ be a homogeneous rational binary form of degree $m$, and let $U,V\geq 1$.  For any integer $n$, there are $O_F(U^\epsilon V^\epsilon |n|^\epsilon)$ integral solutions to the equation $F(u,v)=n$ with $|u|\leq U$ and $|v|\leq V$.
\end{lemma}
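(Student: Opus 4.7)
The plan is the standard divisor-bound argument. Clearing denominators, I may assume $F \in \Z[u,v]$ (the rescaling is absorbed into the $F$-dependent constant). Factor $F = c\prod_{i=1}^k F_i^{e_i}$ into $\Z$-irreducible factors. A solution of $F(u,v) = n$ produces the factorization $\prod_i F_i(u,v)^{e_i} = n/c$, and the number of ways to split $n/c$ as such a product is $O_F(d(n)^{k-1}) = O(|n|^\epsilon)$. It therefore suffices to bound, for each irreducible factor $F_i$ separately, the number of solutions to $F_i(u,v) = n_i$.

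For $F_i$ irreducible of degree $m \geq 2$ with leading coefficient $a_0$, I will pass to the number field $K = \Q[x]/(F_i(x,1))$. If $\alpha \in K$ is a root, then $\theta := a_0 \alpha \in \calO_K$ and $a_0^{m-1} F_i(u,v) = N_{K/\Q}(a_0 u - \theta v)$, so each solution produces an algebraic integer $\beta = a_0 u - \theta v$ of norm $\pm a_0^{m-1} n_i$. The number of integral ideals of $\calO_K$ of this norm is $O(|n_i|^\epsilon)$. For each such ideal, the box condition $|u| \le U$, $|v| \le V$ forces every archimedean absolute value of $\beta$ to be $O_F(\max(U,V))$; together with the fixed norm this confines $\beta$ in the logarithmic embedding to a box of side $O(\log(UV))$. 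By Dirichlet's unit theorem $\calO_K^\times$ sits as a lattice of rank at most $m-1$ in the trace-zero hyperplane, so the number of generators of a fixed ideal that land in the box is $O((\log UV)^{m-1}) \ll U^\epsilon V^\epsilon$. Combining the two estimates yields the lemma for irreducible factors of degree $\geq 2$.

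Linear factors must be treated separately, but they cause no real trouble provided $F$ is not a pure power of a linear form: if $F = L \cdot G$ is a nontrivial factorization with $L$ linear, then $L(u,v)$ is forced to take one of $O(d(n)) = O(|n|^\epsilon)$ divisor values $d \mid n$, and for each the system $L(u,v) = d$, $G(u,v) = n/d$ is generically overdetermined, contributing only $O_F(1)$ solutions (with the higher-degree piece $G(u,v) = n/d$ handled by the previous step after eliminating one variable). The genuinely pathological case of $F$ being a pure power of a linear form does not arise in the applications of this lemma, where $F$ is (a homogenization of) the non-squarefull polynomial from Lemma \ref{lem:disc-factor}. I expect the main obstacle to be clean bookkeeping around the linear-factor reductions, since the ideal-counting and unit-lattice estimates in the main step are entirely standard.
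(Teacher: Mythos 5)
Your proof is correct, but it takes a different route from the paper's. The paper works over a single splitting field $L$ of $F$: it factors $F$ completely into linear forms $\alpha_i u + \beta_i v$ with $\alpha_i, \beta_i \in \mathcal{O}_L$, observes that each solution gives a factorization of the ideal $nk\mathcal{O}_L$ into $m$ principal ideals, uses the divisor bound in $\mathcal{O}_L$ to bound the number of such factorizations by $O(|n|^\epsilon)$, and then applies the unit-lattice count in $L$ (once for each of two independent linear forms, which pins down $(u,v)$). Your proof instead factors $F$ over $\Q$ into irreducible pieces, spends the divisor bound over $\Z$ to split $n$ among the factors, and then handles each irreducible piece in its own root field via norm forms --- which is essentially re-deriving the unit-lattice argument one factor at a time. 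The paper's approach is more uniform: by working in the full splitting field and two of the linear forms simultaneously, it avoids any case distinction, whereas yours requires the separate linear-factor discussion. On the other hand your version is arguably more concrete and makes the dependence on the factorization type of $F$ explicit.

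Two remarks on the bookkeeping. First, the sentence ``It therefore suffices to bound, for each irreducible factor $F_i$ separately, the number of solutions to $F_i(u,v)=n_i$'' overstates the reduction: a single constraint $F_i(u,v)=n_i$ controls nothing when $F_i$ is linear, so that reduction is only valid when $F$ possesses an irreducible factor of degree at least two. You do repair this in the final paragraph by pairing a linear factor $L$ with the complementary factor $G$, but it would be cleaner to state the dichotomy up front: if some $F_i$ has degree $\geq 2$, use that factor alone; otherwise pick two distinct linear factors (or a linear factor and the complementary piece) and use the resulting overdetermined system. Second, your observation that the lemma fails outright when $F$ is a power of a single linear form is correct --- the paper's own proof has the same blind spot, since in that case all the forms $\alpha_i u + \beta_i v$ are proportional and the ideal factorization no longer determines $(u,v)$. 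As you say, the hypothesis is harmless for the application since the homogenization of the non-squarefull polynomial $h(t)$ of Lemma \ref{lem:disc-factor} is never a perfect power of a linear form, but strictly speaking the lemma's statement should exclude this degenerate case.
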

\begin{proof}
We may choose a fixed finite extension $L/\Q$ and factorization
\[
F(u,v) = \frac{1}{k} \prod_{i=1}^m (\alpha_i u + \beta_i v),
\]
for some integer $k$ and algebraic integers $\alpha_i, \beta_i \in \mathcal{O}_L$.  Observe that if $u,v\in\mathbb{Z}$, then $|\alpha_i u + \beta_i v|_\nu \ll U+V$ for each infinite place $\nu$ of $L$.  

Each solution to $F(u,v)=n$ determines a factorization $nk\mathcal{O}_L = \mathfrak{a}_1 \dots \mathfrak{a}_m$ into principal ideals $\mathfrak{a}_i$ of $\mathcal{O}_L$, and
there are $O(n^\epsilon)$ such factorizations. Moreover, writing 
$r$ for the unit rank of $L$, there are at most $O(\log(U+V)^r)$ generators $\gamma_i = \alpha_i u + \beta_i v$ of each ideal $\mathfrak{a}_i$ for which $|\gamma_i|_\nu \ll U+V$ for each infinite place $\nu$. 
The result follows. 
\end{proof}

We are now ready to prove the main theorem of this section.

\begin{proof}[Proof of Theorem \ref{thm:small-degree}]
Let $E$ be given by the Weierstrass model produced in Proposition \ref{prop:Sd}, so that the polynomial $P_f(x,t)$ defined in \eqref{eq:Pf_def} cuts out an $S_d$ extension of $\mathbb{Q}(t)$.  
The polynomial $v^2 P_f(x,u/v)$  has coefficients integral away from a single fixed prime, and by Lemma \ref{lem:disc-factor}, it has discriminant of the form $u^{2d-8}v^{2d-2} H(u,v)$
 for some binary sextic form $H(u,v)$ that is not squarefull. For $d = 3$, the discriminant is instead of the form $v^4 H(u, v)$ with $H$ quartic instead of sextic.

By Hilbert irreducibility (Theorem \ref{thm:hit}), for asymptotically 100\% of pairs $(u, v)$ with $|u|, |v| \leq U$, we will have
that $K=\Q[x]/(P_f(x,u/v))$ is an $S_d$-field extension of $\Q$. We have $v_p(\Disc(K)) \leq p - 1$ for any tamely ramified prime $p$, 
 and $\Disc(K)$ and $\Disc(v^d P_f(x,u/v))$ differ by a rational square. Therefore, $\Disc(K)$ divides a bounded factor times either $u^{d-2}v^{d-2} H(u,v)$ or $u^{d-1}v^{d-1} H(u,v)$, depending on whether $d$ is even or odd.  
 Thus, there is some constant $q_{E,d} > 0$ such that taking $U = q_{E,d} X^{c_d/2}$ guarantees that $|D_K| \leq X$. Finally, Lemmas \ref{lem:greaves} and \ref{lem:multiplicity} guarantee that 
 $H(u, v)$, and hence $\Disc(K)$, represents $\gg X^{c_d - \epsilon}$ distinct square classes, so that  $\gg X^{c_d - \epsilon}$ distinct fields $K$ are produced.
 
 \smallskip
By Lemma \ref{lem:fractional-points}, we have $\rk(E(K)) > \rk(E(\Q))$ for all but a bounded number of these $K$. It remains to control the sign of the root number.  
Lemma \ref{lem:disc-factor} shows that both regions $\Omega^\pm:=\{(u,v) \in [-1,1]^2 : \pm \mathrm{Disc}(P_f(x,u/v))>0\}$ have positive volume. 
By Corollary \ref{cor:sign}, there exists a residue class $(u_0, v_0) \pmod{M}$ (with $M$ a suitably large power of $N_E$), for which 
$w(E,\rho_{K_0})$ is determined by the sign of $\mathrm{Disc}(P_f(x,u/v))$ whenever $(u, v) \equiv (u_0, v_0) \pmod{M}$.  
We incorporate the conditions that $(u, v) \in \Omega^\pm$ and that $(u, v) \equiv (u_0, v_0) \pmod{M}$ into our application of 
Lemma \ref{lem:greaves}, and the remainder of our proof is unchanged.
\end{proof}

Using very similar ideas, we prove Theorem \ref{thm:analytic} on non-abelian cubic twists with analytic rank two.

\begin{proof}[Proof of Theorem \ref{thm:analytic}]
The proof follows that of 
Theorem \ref{thm:small-degree}, except that to apply Corollary \ref{cor:cubic-ranks} we must produce {complex} cubic fields $K$ for which $w(E, \rho_K) = +1$. Accordingly, 
we use Lemma \ref{lem:sign-jacobi} instead of Corollary \ref{cor:sign} to control the root number $w(E,\rho_K)$.  
In the event that $E$ has CM, there is one exceptional quadratic resolvent for which we may not apply Corollary \ref{cor:cubic-ranks}.  However, the quadratic resolvent of $K \in \mathcal{F}_3(X)$ is determined by the squarefree part of its discriminant.  We distinguish fields in the above proof precisely by the squarefree part of their discriminant, so this one possible exceptional field has no impact on the result.

In Lemma \ref{lem:exists_w}, after (ii) but before the remaining steps, we replace $f(x)$ with $N_E^6 f(x N_E^{-2})$, allowing
us to demand that 
$f(x) \equiv x^3 \pmod{N_E}$, so that
\[
\mathrm{Disc}(P_f(x,t)) \equiv \mathrm{Disc}(x^3-(x+t)^2) \equiv -t^3(27t+4) \pmod{N_E}.
\]
For each odd prime $p$ for which $p \mid \mid N_E$, 
an easy argument shows that the polynomial $27t^2+4t$ represents both squares and nonsquares $\pmod{p}$.
Since by hypothesis there is at least one such prime, suitable congruence conditions on $t \pmod{N_E}$ may be chosen to guarantee that both $\mathrm{gcd}(\mathrm{Disc}(P_f(x,t)),N_E) = 1$ and $\left(\frac{\mathrm{Disc}(P_f(x,t))}{N_E}\right) = -1$.  
The result now follows as in the proof of Theorem \ref{thm:small-degree}.
\end{proof}

\section{Geometry of numbers and large degree fields} \label{sec:large-degree}

 In this section we prove the following complement to Theorem \ref{thm:small-degree}:
 
\begin{theorem}
\label{thm:large-degree}
Let $E/\mathbb{Q}$ be an elliptic curve let $d\geq 5$ be an integer.  Then, for each $\varepsilon =\pm 1$, there are $\gg X^{c_d - \epsilon}$ fields $K\in\mathcal{F}_d(X)$ with $w(E,\rho_K)=\varepsilon$ and $\mathrm{rk}(E(K)) > \mathrm{rk}(E(\mathbb{Q}))$, with
\[
c_d = \frac{1}{4} - \frac{d^2+4d-2}{2d^2(d-1)}.
\]
If $d \geq 16052$, then we may take
\[
c_d = \frac{1}{4} - \frac{1}{2d}.
\]
\end{theorem}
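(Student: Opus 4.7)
The plan is to generalize the one-parameter family of Section \ref{sec:galois} to a family with $d - 1$ parameters, then apply an Ellenberg--Venkatesh-style geometry-of-numbers argument to extract a lower bound on the number of fields produced. I would introduce the polynomial family
\[
P(x;u,v) = \pm\bigl(u(x)^2 - v(x)^2 f(x)\bigr),
\]
where $f$ is the Weierstrass polynomial of Lemma \ref{lem:disc-factor} and $u,v \in \Z[x]$ range over polynomials whose degrees are chosen, depending on the parity of $d$, so that $P$ is monic of degree exactly $d$. In both parities this furnishes $d - 1$ free integer parameters, and by construction every root $\alpha$ of $P(x;u_0,v_0)$ produces the point $(\alpha, u_0(\alpha)/v_0(\alpha)) \in E(K)$, where $K = \Q[x]/(P)$. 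Since the single-parameter family $P_f(x,t)$ of \eqref{eq:Pf_def} arises as a specialization, Proposition \ref{prop:Sd} together with Hilbert irreducibility (Theorem \ref{thm:hit}) imply that a density-$1$ set of integer specializations $(u_0,v_0)$ in any sufficiently large box yields $S_d$-fields $K \in \mathcal{F}_d$.

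The heart of the proof is a lattice-point count. I would equip each parameter $t_i$ with the weight $w_i$ it carries under the natural weighted homogeneity of $\mathrm{Disc}(P)$ in $x$ (explicitly $w_i = (d-2i)/2$ for the $u$-coefficients in the odd case, with an analogous assignment for the $v$-coefficients and in the even case), so that each coefficient of $x^{d-j}$ in $P$ has size $\ll T^j$ when $|t_i| \leq T^{w_i}$. Weighted homogeneity then yields $|\mathrm{Disc}(P)| \ll T^{d(d-1)}$, so the choice $T = X^{1/(d(d-1))}$ guarantees $|\mathrm{Disc}(K)| \leq X$ throughout the box. A direct count shows this box contains $N \asymp X^{1/4 + 1/(2d(d-1))}$ integer lattice points. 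To convert this volume into a count of distinct fields I must bound the multiplicity $m(K)$: the number of specializations producing the same $K$. Each such specialization corresponds to a generator $\alpha \in \mathcal{O}_K$ together with $y \in \mathcal{O}_K$ satisfying $y^2 = f(\alpha)$ and $y = u_0(\alpha)/v_0(\alpha)$, so $m(K)$ counts lattice points $(\alpha, y)$ in a Minkowski region of $\mathcal{O}_K^2$ cut out by the quadratic relation $y^2 = f(\alpha)$.

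Adapting the Ellenberg--Venkatesh successive-minima argument to this constrained count should give a pointwise bound $m(K) \ll X^{\mu + \epsilon}$ with $\mu$ computed explicitly. Dividing $N$ by this multiplicity produces
\[
\#\{K \in \mathcal{F}_d(X) : \rk(E(K)) > \rk(E(\Q))\} \gg X^{c_d - \epsilon}, \qquad c_d = \frac{1}{4} - \frac{d^2 + 4d - 2}{2d^2(d-1)}.
\]
For $d \geq 16052$, I would combine the same volume estimate with the assumed upper bound $\#\mathcal{F}_d(X) \ll X^{(d-3)/4 + 1/(2d) + \epsilon}$ via a Cauchy--Schwarz / pigeonhole argument on the first two moments of $m(K)$; this sharpens the multiplicity estimate on average and yields the improved $c_d = 1/4 - 1/(2d)$. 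Both root numbers $w(E,\rho_K) = \pm 1$ are separated exactly as in the proof of Theorem \ref{thm:small-degree}: Lemma \ref{lem:disc-factor} ensures that both signs of $\mathrm{Disc}(P)$ occur on positive-measure subregions of the box, and Corollary \ref{cor:sign} converts this into separation of root numbers once suitable congruence conditions modulo $N_E$ are imposed on $(u_0, v_0)$.

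The principal obstacle I expect is the multiplicity bound. Unlike the unconstrained Ellenberg--Venkatesh count, our pairs $(\alpha, y)$ are constrained by the hyperelliptic-type relation $y^2 = f(\alpha)$ inside $\mathcal{O}_K^2$, which interacts nontrivially with the lattice structure of $\mathcal{O}_K$. Extracting the sharp $X^{1/(2d) + O(1/d^2)}$ dependence needed for the advertised $c_d$ requires a careful successive-minima analysis that simultaneously exploits the quadratic constraint and the independent size control on $\alpha$ and on $y$; finding the right balance between parameter freedom and multiplicity loss is the central quantitative challenge.
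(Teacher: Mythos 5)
Your overall architecture — the family $H = F^2 - fG^2$ with $d-1$ free parameters, the weight bookkeeping giving $|\mathrm{Disc}(H)| \ll Y^{d(d-1)}$ and $Y^c$ lattice points with $c = \tfrac{d(d-1)}{4}+\tfrac12$, and the separation of root numbers by the sign of $\mathrm{Disc}(H)$ via Corollary \ref{cor:sign} — matches the paper's proof. But the step you call the ``heart of the proof'' is where the argument breaks, and it is exactly where the paper is doing something different.

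You plan to prove a \emph{uniform pointwise} bound $m(K) \ll X^{\mu+\epsilon}$, and then pigeonhole. No such bound is available unconditionally. The Ellenberg--Venkatesh successive-minima estimate gives $M_K(Y) \ll \max\bigl(Y^d\,\mathrm{Disc}(K)^{-1/2},\ Y^{d/2}\bigr)$ (Proposition \ref{prop:shape}), so for fields of small discriminant the multiplicity can be as large as $Y^d$; dividing $Y^c$ by $Y^d$ gives a worthless exponent. The value $\mu$ you would need (one checks it equals $\tfrac{d^2+5d-2}{2d^2(d-1)}$) is precisely the bound $Y^{d}/T^{1/2}$ with $T = Y^{d-5+2/d}$, which only holds for fields with $\mathrm{Disc}(K) > T$. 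The paper's essential idea, which your proposal omits entirely, is to bound the \emph{aggregate} contribution of the small-discriminant fields by combining the multiplicity bound $Y^d\mathrm{Disc}(K)^{-1/2}$ with Schmidt's count $\#\mathcal{F}_d(T) \ll T^{(d+2)/4}$ (Proposition \ref{prop:ev}), shows that this contribution is $o(Y^c)$ for the chosen $T$, and only then pigeonholes over the remaining large-discriminant fields. Without this dichotomy the divide-by-multiplicity step is not valid, and the exponent $c_d$ is not obtained.

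Your alternative suggestion — exploiting the constraint $y^2 = f(\alpha)$ on the pairs $(\alpha,y)$ to get a sharper, genuinely pointwise $m(K)$ — is not a route to the stated theorem either: it is essentially the Helfgott--Venkatesh integral-point count that the paper discusses in Section \ref{sec:large-degree} under ``Limitations and conditional improvements,'' and there the bound it yields depends on $\mathrm{rk}(E(K))$, which is only known to be $O(\log D_K)$ unconditionally. It becomes useful only after assuming GRH+BSD or a Brauer--Siegel-type bound on $2$-class groups, and then it gives the stronger exponent of Theorem \ref{thm:overkill}, not the unconditional $c_d$ of this theorem. Finally, for $d \geq 16052$ the paper does not run a Cauchy--Schwarz second-moment argument; it simply substitutes the sharper field-count $\alpha(d) = \tfrac{d}{4}-\tfrac34+\tfrac{1}{2d}$ into the same small/large-discriminant dichotomy, which lets one take $T$ up to $Y^{d-\epsilon}$ and hence $M_K(Y) \ll Y^{d/2+\epsilon}$ on the large-discriminant range. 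You should also note, as the paper does in Lemma \ref{lem:bound_mult}, that the map $(F,G) \mapsto H$ is $O_d(1)$-to-one (by unique factorization of the ideal $(H)$ in the Dedekind domain $\C[x,y]/(y^2-f)$); this is needed before the lattice-point count in $(F,G)$-space can be transferred to a count of polynomials $H$.
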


The result is identical to Theorem \ref{thm:small-degree} except for the value of $c_d$.
Here it is an increasing function of $d$, and this result improves upon Theorem \ref{thm:small-degree} for $d \geq 9$.

\smallskip

Our strategy is to adapt Ellenberg and Venkatesh's proof of a lower bound \cite{EV} for $\#\mathcal{F}_d(X)$. They produce
many algebraic integers $\alpha$ for which $|\Disc(\Z[\alpha])| < X$, and then, for each field $K$, bound from above the number of $\alpha$ so constructed with
$\Q(\alpha) = K$. We adapt their construction so as to produce only those $\alpha$ for which 
there are polynomials $F(x),G(x) \in \mathbb{Z}[x]$ such that $\big(\alpha,\frac{F(\alpha)}{G(\alpha)}\big)$ is a point on $E(\overline{\mathbb{Q}})$.  Equivalently, if $E$ is given by the Weierstrass model $E\colon y^2=f(x)$, we only count those $\alpha$ arising as solutions to $F(x)^2 - f(x)G(x)^2 = 0$ for some $F$ and $G$.


\subsection{The construction} 
Let $d \geq 4$. 
Using Lemma \ref{lem:exists_w} to choose a Weierstrass model for $E/\mathbb{Q}$, we consider the following family of polynomials.
Fix a parameter $Y$ to be chosen shortly. The construction
is slightly different depending on whether $d$ is odd or even.

If $d$ is even, we take:
\begin{itemize}
\item
$F(x) = x^{\frac{d}{2}} + a_1 x^{\frac{d}{2} - 1} +  a_2 x^{\frac{d}{2} - 2} + \dots + a_{d/2}$, an integral monic polynomial of degree $\frac{d}{2}$ with 
and $|a_k| \leq Y^k$ for each $k$.
\item
$G(x) = b_2 x^{\frac{d}{2} - 2} + b_3 x^{\frac{d}{2} - 3} + \dots + b_{d/2}$, an integral polynomial of degree $\frac{d}{2} - 2$ with
$|b_k| \leq Y^{k - \frac{3}{2}}$ for each $k$.
\end{itemize}

If $d$ is odd, we instead take: 
\begin{itemize}
\item
$G(x) = x^{\frac{d - 3}{2}} + b_1 x^{\frac{d - 3}{2} - 1} +  b_2 x^{\frac{d - 3}{2} - 2} 
+ \dots + 
b_{\frac{d - 3}{2}}$,
with 
$|b_k| \leq Y^k$ for each $k$.
\item
$F(x) = a_0 x^{\frac{d - 1}{2}} + a_1 x^{\frac{d - 1}{2} - 1}
+ \dots + 
a_{\frac{d - 1}{2}}$, with
$|a_k| \leq Y^{k + \frac{1}{2}}$ for each $k$.
\end{itemize}

\medskip
In either case, the polynomial
\begin{equation}\label{eq:def_H}
H(x) := F^2 - f G^2 = x^d + c_1 x^{d - 1} + c_2 x^{d - 2} + \cdots + c_d
\end{equation}
has $|c_k| \ll_{f, d} Y^k$ for each $k$, so that
$|\Disc(H)| \ll_{f, d} Y^{d(d -1)}$.  Thus, we will ultimately take $Y=q_{f,d} X^{1/d(d-1)}$ for a suitable constant $q_{f,d}$.
In general $H$ is not required to have integral coefficients (because $f$ isn't), but $H$ will have rational
coefficients whose denominators are bounded above by a fixed constant (depending on $E$ and $d$).  

\begin{lemma}
Let $R$ be the polynomial ring obtained by adjoining all the $a_i$ and $b_j$ as indeterminates to 
$\Z[\frac{1}{p_1}]$.

Then, as a polynomial in $R[x]$, $H$ is irreducible with Galois group $S_d$.
\end{lemma}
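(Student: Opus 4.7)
The strategy is to recover the one-parameter family $\pm P_f(x,t)$ from \eqref{eq:Pf_def} as a specialization of $H$, so that Proposition \ref{prop:Sd} does the heavy lifting, and then lift the conclusion to the generic $H \in R[x]$ via the standard principle that Galois groups cannot grow under specialization.

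Concretely, when $d$ is even I would set $a_1 = \cdots = a_{d/2} = 0$ and $b_2 = \cdots = b_{d/2-1} = 0$, leaving $b_{d/2} = t$ as a free parameter. Then $F(x) = x^{d/2}$ and $G(x) = t$, so
\[
H(x) \;=\; x^d - t^2 f(x) \;=\; t^2 \, P_f(x, 1/t),
\]
which generates the same splitting field over $\mathbb{Q}(t) = \mathbb{Q}(1/t)$ as does $P_f(x, 1/t)$. When $d$ is odd I would instead take $b_j = 0$ for all $j$ and $a_i = 0$ for all $i < (d-1)/2$, with $a_{(d-1)/2} = t$. Then $F(x) = t$, $G(x) = x^{(d-3)/2}$, and
\[
H(x) \;=\; t^2 - x^{d-3} f(x) \;=\; -P_f(x, t).
\]
In either parity, Proposition \ref{prop:Sd} tells us that the specialized $H$ is irreducible with Galois group $S_d$ over $\mathbb{Q}(t)$.

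To pass from this specialization back to the generic $H$, I would invoke the general fact that for a degree-$d$ polynomial in $R[x]$ whose image under a ring homomorphism $R \to R'$ remains degree-$d$ and separable, the Galois group of the specialization over $\mathrm{Frac}(R')$ embeds into the generic Galois group over $\mathrm{Frac}(R)$. Both hypotheses are trivially verified: the leading coefficient of $H$ is $\pm 1$, and the discriminant of $\pm P_f$ is nonzero by Lemma \ref{lem:disc-factor}. Hence $S_d \hookrightarrow \mathrm{Gal}(H/\mathrm{Frac}(R)) \leq S_d$, forcing equality. Irreducibility of $H$ over $R$ follows immediately, either from the transitivity of $S_d$ on $d$ letters or directly: any factorization $H = H_1 H_2 \in R[x]$ would descend under the specialization to a nontrivial factorization of $\pm P_f$, contradicting Lemma \ref{lem:large-cycles}.

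The only potential obstacle is citing the specialization principle for Galois groups cleanly. If one prefers a self-contained route, an essentially equivalent argument is to first apply Hilbert irreducibility (Theorem \ref{thm:hit}) to the specialized $P_f(x,t) \in \mathbb{Q}(t)[x]$ to produce a positive density of integer values of $t$ for which the resulting polynomial has Galois group $S_d$ over $\mathbb{Q}$, and then to use Corollary \ref{cor:cycles} cycle-type-by-cycle-type to deduce that the generic Galois group of $H$ contains a $d$-cycle, a transposition, and a $(d-1)$-cycle or $(d-2)$-cycle, so that Lemma \ref{lem:S_d-criterion} identifies it as $S_d$.
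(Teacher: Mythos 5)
Your primary argument is essentially the paper's proof: both make the same specializations $(F,G) = (x^{d/2}, t)$ for $d$ even (with the $t \mapsto t^{-1}$ automorphism of $\Q(t)$) and $(F,G) = (t, x^{(d-3)/2})$ for $d$ odd, thereby recovering $\pm P_f(x,t)$ from \eqref{eq:Pf_def}, and then appeal to the principle that Galois groups do not grow under good specialization. The paper leaves that principle implicit, simply saying ``it suffices to exhibit specializations,'' so your version, which spells out the hypotheses to verify (monicity preserved, nonvanishing discriminant via Lemma~\ref{lem:disc-factor}), is if anything slightly more careful.

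The proposed ``self-contained'' alternative, however, does not work as stated. Corollary~\ref{cor:cycles} requires that the given cycle type appear for a positive proportion of $\mathbf{t}_0$ in the \emph{full} parameter space $\mathbb{Q}^k$, where here $k$ is the total number of indeterminates $a_i, b_j$ (which is $d-1$). Hilbert irreducibility applied to the one-variable $P_f(x,t)$ only supplies positive density along the one-dimensional slice where all but one coordinate are frozen at zero, and that slice has density zero in $\mathbb{Q}^k$ as soon as $k \geq 2$. Fixing this would require showing that the Galois group remains $S_d$ as the remaining $a_i, b_j$ vary near that slice --- which is precisely the specialization principle the alternative was meant to avoid. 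So the second route collapses back to the first; stick with the specialization argument.
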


\begin{proof}
It suffices to exhibit specializations of the $a_i$ and $b_j$ to the polynomials described in 
\eqref{eq:Pf_def}, proved to be irreducible over $\Q(t)$ with Galois group $S_d$.

When $d$ is odd, choose $F = t$ and $G = x^{\frac{d - 3}{2}}$. This yields $H = - (x^{d - 3} f(x) - t^2)$, which is the same as \eqref{eq:Pf_def} up to a sign.

When $d$ is even, choose $F = x^{d/2}$ and $G = t$, obtaining
$H(x, t) = x^d - t^2 f(x)$. The polynomial $t^2 H(x, t^{-1}) = t^2 x^d - f(x)$ also appeared in \eqref{eq:Pf_def} and was previously proved irreducible over $\Q(t)$ with Galois group $S_d$.
Since the map $t \rightarrow t^{-1}$ induces an automorphism of $\Q(t)$, the same is true of $H(x, t)$.
\end{proof}

The following lemma establishes that we can control the discriminant of $H$, thereby allowing us to use 
Corollary \ref{cor:sign} to control the root number $w(E, \rho_K)$.

\begin{lemma}\label{lem:control_sign}
Suppose we are given a fixed choice of $H_0(x)$ as in \eqref{eq:def_H}, a positive integer $M$ coprime to the
denominators of the coefficients of $f$, and a choice of sign $\delta \in \pm 1$.

Then, as $Y \rightarrow \infty$, a positive proportion of the polynomials $H$ constructed above satisfy $H \equiv H_0 \pmod{M}$
and $\textnormal{sgn}(\Disc(H)) = \delta$.
\end{lemma}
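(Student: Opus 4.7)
I would decouple the two conditions: impose $H \equiv H_0 \pmod{M}$ by fixing the residues of $F$ and $G$, and establish the sign of $\Disc(H)$ by exhibiting open subsets of the parameter box on which the sign is constant. Combining the two via lattice point counting then yields a positive proportion of integer points $(F,G)$ satisfying both.

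For the congruence step, write $H_0 = F_0^2 - fG_0^2$. Because $M$ is coprime to the denominators of the coefficients of $f$, any $(F,G)$ with $F \equiv F_0$ and $G \equiv G_0 \pmod{M}$ (coefficient-wise) automatically yields $H \equiv H_0 \pmod{M}$. A standard lattice-point count in the $(d-1)$-dimensional parameter box then shows that the number of such $(F,G)$ is asymptotically $M^{-(d-1)}$ times the total, hence a positive proportion.

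For the sign step, view $\Delta(F,G) := \Disc_x(F^2 - fG^2)$ as a polynomial in the coefficients $(a_i, b_j)$, and consider the one-parameter specializations already used in the proof of Lemma \ref{lem:large-cycles}: $(F,G) = (x^{d/2}, t)$ for $d$ even and $(F,G) = (t, x^{(d-3)/2})$ for $d$ odd. In each case $H$ agrees with the polynomial $P_f$ of \eqref{eq:Pf_def}, possibly after the change of variable $t \mapsto t^{-1}$ whose effect on $\Disc_x$ is multiplication by an even power of $t$ and so preserves sign for $t \neq 0$. Lemma \ref{lem:disc-factor} therefore guarantees that $\Delta$ assumes both signs along either slice, and for $Y \geq 1$ the slice lies in the interior of the parameter box. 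By continuity of $\Delta$, the two sets $\{\Delta > 0\}$ and $\{\Delta < 0\}$ are open subsets of the box with positive Lebesgue measure.

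To combine, note that as $Y \to \infty$ the congruence class $(F,G) \equiv (F_0, G_0) \pmod{M}$ equidistributes in the parameter box, so its intersection with the chosen open sign region $\{\mathrm{sgn}(\Delta) = \delta\}$ contains a positive proportion of the box's integer points. The main obstacle is genuinely showing that $\Delta$ realizes both signs on the parameter space — a fact already furnished by Lemma \ref{lem:disc-factor} via the specializations above — while the remaining ingredients (continuity of $\Delta$ and equidistribution of lattice cosets modulo $M$) are standard.
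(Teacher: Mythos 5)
Your treatment of the congruence $H\equiv H_0 \pmod{M}$ is fine, and separating the two constraints and then intersecting is the right structure. The gap is in the sign step. The parameter box has sides $|a_k| \le Y^k$ and $|b_k| \le Y^{k-3/2}$, so it grows anisotropically with $Y$. Showing the two sets $\{\Delta>0\}$ and $\{\Delta<0\}$ are nonempty and open only gives each a fixed positive Lebesgue measure, which is a \emph{vanishing} fraction of a box whose volume grows like $Y^c$. To get a positive proportion one must control the leading behavior of $\Delta$ under the weighted dilation $a_k\mapsto Y^ka_k$, $b_k\mapsto Y^{k-3/2}b_k$. Writing $H_Y(x):=Y^{-d}H(Yx)$ (which has the same discriminant sign), one finds $H_Y = \tilde F^2 - (Y^{-3}f(Yx))\tilde G^2 \to \tilde F^2 - x^3\tilde G^2$ as $Y\to\infty$, with $(\tilde F,\tilde G)$ now ranging over a fixed unit cube. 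What is actually needed is therefore an interior point of the normalized cube where $\Disc(\tilde F^2-x^3\tilde G^2)$ is strictly positive and another where it is strictly negative; continuity about those two points then yields two fixed-measure neighborhoods, hence a positive proportion for each sign.

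Your one-parameter slices are exactly the ones that fail this test. The sign changes detected by Lemma~\ref{lem:disc-factor} occur at bounded $t$, i.e.\ at $\tilde t = O(Y^{-d/2+O(1)})$ in normalized coordinates, collapsing onto the degenerate origin as $Y\to\infty$. Worse, along the entire slice the limiting discriminant fails to vary in sign: for $d$ even, $(\tilde F,\tilde G)=(x^{d/2},\tilde t)$ gives $\tilde F^2-x^3\tilde G^2 = x^3(x^{d-3}-\tilde t^2)$, whose discriminant is identically zero; for $d$ odd, $(\tilde F,\tilde G)=(\tilde t,x^{(d-3)/2})$ gives $\tilde t^2-x^d$, whose discriminant $(-1)^{d(d-1)/2}d^d\,\tilde t^{2(d-1)}$ has constant sign for $\tilde t\ne 0$ because $d-1$ is even. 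So a tubular neighborhood of the slice gives only an $o(1)$-fraction of the box. The paper closes this gap by exhibiting explicit \emph{nondegenerate} real points of the unit cube --- for $d$ even, $\tilde F = x^{d/2}+\tfrac{1}{100}$ with $\tilde G=\tfrac{1}{100}$ or $\tfrac{9}{10}$, and an analogous construction for $d$ odd --- where Descartes's rule of signs shows the number of real roots of $\tilde F^2-x^3\tilde G^2$ jumps by two, forcing opposite nonzero discriminant signs. You would need to replace your degenerate $t$-slices with such a pair of explicit nondegenerate points.
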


\begin{proof}
We will exhibit choices of $F$ and $G$ with the $a_i$ and $b_j$ real numbers in $(-1, 1)$ for which 
$\Disc(F^2 - x^3 G^2)$ is positive and for which it is negative. 

Once this is done, the lemma quickly follows: for each $H$, define $H_Y(x) = Y^{-d} H(xY)$; equivalently, 
divide each $c_i$ in \eqref{eq:def_H} by $Y^i$. Then 
$\textnormal{sgn}(\Disc(H)) = \textnormal{sgn}(\Disc(H_Y))$. 
Since $Y^{-3} f(xY)$ tends to $x^3$ as $Y \rightarrow \infty$, and since the discriminant of a polynomial
is a continuous function of the coefficients, a positive proportion of the $H$ constructed will satisfy $H \equiv H_0 \pmod{M}$
and will have $H_Y$ sufficiently close to $F^2 - x^3 G^2$ as to guarantee that their discriminants are of the same sign.

Our $F$ and $G$ are chosen in an ad hoc manner. When $d$ is even,
choose
\[
F(x) = \Big( x^{d/2} + \frac{1}{100} \Big), \ \ G(x) = \lambda,
\]
and set $T(x) := F(x)^2 - x^3G(x)^2$.  We now recall Descartes's \emph{rule of signs}, that the number of positive roots of a real polynomial is bounded by the number of sign changes in its consecutive non-zero coefficients.
When $\lambda = \frac{1}{100}$, $T(x)$ is always positive and has no real roots.  When $\lambda = \frac{9}{10}$,
$T(x)$ has exactly two real roots by Descartes's rule of signs and because $T(\frac{1}{2})$ is negative.
Therefore, these two choices of $\lambda$ lead to opposite signs for $\Disc(T)$.

Similarly, when $d$ is odd, choose
\[
F(x) = (x + \lambda)^2, \ \ G(x) = x^{\frac{d - 3}{2}},
\]
and again set $T(x) = F(x)^2-x^3G(x)$.  Then $T(x)$ has an odd number of real roots. When $\lambda = \frac{1}{10}$, $T$ has exactly one real root by Descartes's rule.  When
$\lambda = - \frac{1}{10}$, $T$ may have either one or three roots.  We have $T(0) > 0$, $T(\frac{1}{10}) < 0$, and $T(\frac{1}{5}) > 0$, so that $T(x)$ has three real roots in this case.
We once again obtain opposite signs for $\Disc(T)$.
\end{proof}

\subsection{Bounding multiplicities}
There are two sources of multiplicity with which a single field $K$ can arise from multiple choices of the
$a_i$ and $b_j$.  We first bound the number of times in which a given polynomial $H$ can occur in
the construction \eqref{eq:def_H}.

\begin{lemma}\label{lem:bound_mult}
Let $H(x)$ and $f(x)$ be polynomials in $\Z[\frac{1}{p_1}][x]$ of degree $d$ and $3$ respectively.  Then
the number of polynomials $F(x), G(x) \in \Z[x]$ with $F^2 - fG^2 = H$ and with at least one of $F$ and $G$ monic, is $O_d(1)$.
\end{lemma}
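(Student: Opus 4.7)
The plan is to recast the equation $F^2 - fG^2 = H$ as a norm condition on the function field of the elliptic curve $E \colon y^2 = f(x)$. Each solution $(F,G)$ gives a regular function $\phi := F + Gy$ on $E \setminus \{O\}$, and the equation becomes $\phi \bar\phi = H$, where $\bar\phi = F - Gy$ is the image of $\phi$ under the hyperelliptic involution $y \mapsto -y$. Taking divisors on $E$, this yields the relation $(\phi) + (\bar\phi) = (H)$ in $\mathrm{Div}(E)$.

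Next, I would compute $(H)$ on $E$ explicitly. Writing $H(x) = \prod_{j=1}^{d} (x - r_j)$ over $\overline{\Q}$, each root $r_j$ pulls back to two points $Q_j = (r_j, \sqrt{f(r_j)})$ and $\bar Q_j = (r_j, -\sqrt{f(r_j)})$ on $E$ (or to a single ramification point when $f(r_j) = 0$), so $(H) = \sum_j (Q_j + \bar Q_j) - 2d \cdot O$. Writing $(\phi) = D - d \cdot O$ for an effective divisor $D$ of degree $d$ supported away from $O$, the identity $D + \bar D = \sum_j (Q_j + \bar Q_j)$ forces, at each unramified $r_j$, a binary choice between placing $Q_j$ or $\bar Q_j$ in $D$, and no choice at all at the ramified points. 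This bounds the number of possible divisors $D$ by $2^d$.

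Given any such $D$, the function $\phi$ with $(\phi) = D - d \cdot O$, when it exists at all, is unique up to multiplication by a scalar in $\overline{\Q}^\times$, since any two functions with the same divisor differ by a constant. Requiring that $F, G \in \Z[x]$ with at least one monic then determines this scalar up to $O(1)$ choices, and for a $\Q$-rational $\phi$ to exist, $D$ must furthermore be Galois-stable. Multiplying these estimates, the total number of valid pairs $(F, G)$ is $O(2^d) = O_d(1)$.

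The one point that requires real care is the bookkeeping when $H$ has repeated roots or shares roots with $f$, since then the clean correspondence between subsets of $\{Q_j\}$ and admissible divisors $D$ must be replaced by a count over compatible multiplicities at each fiber of the double cover $E \to \mathbb{P}^1$. Fortunately, each such degeneration only \emph{decreases} the number of admissible $D$, so the upper bound $2^d$ survives intact; this is the main (and essentially only) obstacle, while the rest of the argument reduces to standard linear equivalence on an elliptic curve.
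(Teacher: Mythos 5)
Your proof is correct and takes essentially the same route as the paper's: both rewrite the equation as a factorization $H = (F - G\sqrt{f})(F+G\sqrt{f})$ on the elliptic curve and bound the number of ways to distribute the factors, then note that the factorization determines $(F,G)$ up to a constant that the monicity condition pins down. The paper phrases this as unique factorization of ideals in the Dedekind coordinate ring $\C[x,y]/(y^2-f)$, while you phrase it in terms of divisors and linear equivalence on the projective curve; these are the same argument in two standard dialects, and your pairing of $Q_j$ with $\bar Q_j$ even sharpens the constant slightly (from $2^{2d+1}$ to roughly $2^d$).
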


\begin{proof}
To each way of writing $H = F^2 - fG^2$ we associate the factorization $H = (F - G \sqrt{f})(F + G \sqrt{f})$
in the coordinate ring $\C[x][\sqrt{f}] = \C[x, y]/(y^2 - f)$ of our elliptic curve.
This ring is a Dedekind domain \cite[Theorem II.5.10]{lorenzini}, so the ideal $(H)$ factors uniquely 
as a product of prime ideals, each of the form $(x - x_i, y - y_i)$ with $y_i^2 = f(x_i)$. Moreover, the curves $H = 0$ and $y^2 = f$ intersect
in $2d$ points, counted with multiplicity, which implies that at most $2d$ prime ideals can occur in this factorization.

Since the ideal $(F - G \sqrt{f})$ is a product of some subset of these primes, there are at most $2^{2d}$ possibilities for it, and this ideal determines $F$ and $G$ up to a constant multiple.
Since one of $F$ or $G$ is required to be monic, $F$ and $G$ are therefore determined
in at most $2^{2d + 1}$ ways.
\end{proof}

We now bound the number of different polynomials $H$ yielding the same field $K$. This is a variation of \cite[Lemma 3.1]{EV}, incorporating 
an improvement that was suggested there. 

The restriction \eqref{eq:def_H} won't be used in this bound, so we consider the larger set of polynomials
\[
S(Y; S_d) := \{ f = x^d + c_1 x^{d - 1} + \cdots + c_d \in \Z\Big[\frac{1}{p_1}\Big][x] \ : \ |c_i| \leq (CY)^d \}
\]
whose denominators are bounded by those of $f(x)$, subject to the condition that $K := \mathbb{Q}[x]/(f(x))$ is a field with Galois group $S_d$, and where 
$C$ is a constant depending only on $f$ and $d$. By construction, this set contains all polynomials constructed in
\eqref{eq:def_H}. For each number field $K$ of degree $d$, we then define
\[
M_K(Y) := \#\{ f \in S(Y;S_d) : \mathbb{Q}[x]/(f(x)) \simeq K\}
\]
to be the multiplicity with which $K$ is so constructed.

\begin{proposition}\label{prop:shape} We have
\begin{equation}\label{eq:MK_bound}
M_K(Y) \ll \max(Y^{d}\mathrm{Disc}(K)^{-1/2}, Y^{d/2}).
\end{equation}
\end{proposition}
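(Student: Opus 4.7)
The plan is to translate the count $M_K(Y)$ into a lattice-point count in $\mathcal{O}_K$ via the Minkowski embedding and then deploy Minkowski's theorems, adapting the strategy of Ellenberg--Venkatesh \cite{EV}. First I would observe that each polynomial $f \in S(Y; S_d)$ with $\mathbb{Q}[x]/(f(x)) \simeq K$ is the minimal polynomial of some element $\alpha$ of the localized ring $\mathcal{O}_K[\tfrac{1}{p_1}]$; since localizing at the single prime $p_1$ inflates the relevant lattice-point counts by at most a constant depending on $E$ and $d$, I can effectively work with $\mathcal{O}_K$ throughout. The coefficient bound---via the elementary symmetric function formulas for the $c_i$ in one direction and Cauchy's bound on roots in the other---is equivalent to $|\sigma(\alpha)| \leq C'Y$ for every embedding $\sigma : K \hookrightarrow \mathbb{C}$. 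Since each $f$ is the minimal polynomial of exactly $d$ Galois-conjugate values of $\alpha$, this gives
\[
d \cdot M_K(Y) \ll N_K(Y) := \#\{\alpha \in \mathcal{O}_K : |\sigma(\alpha)| \leq C'Y \text{ for all } \sigma\}.
\]

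Next I would view $\mathcal{O}_K$ as a rank-$d$ lattice in the Minkowski space $V = K \otimes \mathbb{R}$ of covolume $\asymp \sqrt{\mathrm{Disc}(K)}$, and let $\lambda_1 \leq \cdots \leq \lambda_d$ be its successive minima with respect to the unit box $B_1 = \{v : |\sigma(v)| \leq 1\}$. Since $1 \in B_1$ we have $\lambda_1 = 1$, and Minkowski's second theorem gives $\prod_i \lambda_i \asymp \sqrt{\mathrm{Disc}(K)}$. The standard lattice-point estimate yields
\[
N_K(Y) \ll \prod_{i=1}^{d} \max\!\Big(1, \frac{Y}{\lambda_i}\Big),
\]
which when $Y$ exceeds every $\lambda_i$ collapses to $\asymp Y^d/\sqrt{\mathrm{Disc}(K)}$, accounting for the first term of the claimed maximum.

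In the complementary regime $Y^d < \mathrm{Disc}(K)$ I would invoke the improvement mentioned in \cite{EV}. The ``free'' factor of $Y$ from $\lambda_1 = 1$ really records the integer shifts $\alpha \mapsto \alpha + n$; after normalizing $\alpha$ so that $|\mathrm{tr}(\alpha)| \leq d/2$, it is pinned to a slab of bounded width about the trace-zero hyperplane $V_0 \subset V$. One counts instead in the rank-$(d-1)$ trace-zero sublattice $\mathcal{O}_K^0 = \mathcal{O}_K \cap V_0$, whose successive minima $\mu_1 \leq \cdots \leq \mu_{d-1}$ satisfy $\prod_i \mu_i \asymp \sqrt{\mathrm{Disc}(K)}$. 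Combining the analogous lattice-point bound on $\mathcal{O}_K^0$ with the $O(Y)$ choices of integer shift, and applying the AM--GM inequality to partial products of the $\mu_i$, delivers the alternative bound $N_K(Y) \ll Y^{d/2}$ in this regime. Dividing the combined estimate by $d$ yields the stated inequality.

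The main obstacle I expect is establishing the $Y^{d/2}$ bound in the small-$Y$ regime. The successive minima $\mu_i$ can be badly unbalanced, so a naive product estimate on $\mathcal{O}_K^0$ is too weak; the improvement exploits both the geometric-mean constraint $\prod_i \mu_i \asymp \sqrt{\mathrm{Disc}(K)}$ and the fact that an $S_d$-field $K$ has no proper nontrivial subfield, so that every $\alpha \in \mathcal{O}_K \setminus \mathbb{Z}$ is automatically a field generator and the only short lattice points that can be cheaply discarded are the integers themselves.
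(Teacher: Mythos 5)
Your framework—reduce to lattice points of $\mathcal{O}_K$ in a box of size $Y$, introduce successive minima, and split into cases according to whether $Y$ dominates all the minima—matches the paper's, and your handling of the large-$Y$ case (collapsing the product bound to $Y^d / \sqrt{\mathrm{Disc}(K)}$) is correct. But your treatment of the small-$Y$ regime has a genuine gap. The phrase ``applying the AM--GM inequality to partial products of the $\mu_i$'' cannot deliver $Y^{d/2}$: the only constraints you have in hand are $\prod_i \mu_i \asymp \sqrt{\mathrm{Disc}(K)}$, $\mu_i \geq 1$, and $\mu_k \leq Y < \mu_{k+1}$, and these are consistent with, say, $\mu_1 = \cdots = \mu_{d-2} = 1$ and $\mu_{d-1} \asymp \sqrt{\mathrm{Disc}(K)}$. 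In that configuration, for $1 < Y < \mu_{d-1}$ you have $k = d-2$ and the lattice-point bound is $Y \cdot Y^{d-2}/(\mu_1\cdots\mu_{d-2}) = Y^{d-1}$, which is far larger than $Y^{d/2}$. AM--GM on the small minima only yields an \emph{upper} bound $\mu_1\cdots\mu_k \leq Y^k$, which bounds $Y^{k+1}/(\mu_1\cdots\mu_k)$ from \emph{below}, the wrong direction.

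What is actually needed—and what the paper supplies—is an upper bound on the \emph{largest} successive minimum, namely $\lambda_{d-1} \ll \mathrm{Disc}(K)^{1/d}$, quoted from \cite[Theorem 3.1]{BSTTTZ}. This rules out the unbalanced configuration above and lets one bound the tail product $\lambda_{k+1}\cdots\lambda_{d-1} \ll \mathrm{Disc}(K)^{(d-1-k)/d}$, after which the Minkowski relation $\prod \lambda_i \asymp \sqrt{\mathrm{Disc}(K)}$ and the inequality $Y \ll \mathrm{Disc}(K)^{1/d}$ (forced in this regime) yield $Y^{d/2}$ as in \eqref{eq:gon}. Your closing remark that primitivity of $K$ should matter is pointing in a reasonable direction—the proof of the BSTTTZ bound does exploit the ring structure of $\mathcal{O}_K$—but you neither state the needed inequality nor supply an argument for it, so the small-$Y$ case is not established by your proposal as written.
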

\begin{proof} Embed $\mathcal{O}_K \hookrightarrow \mathbb{R}^n$ in the usual way, and 
let $\lambda_0, \lambda_1,\dots,\lambda_{d-1}$ denote the successive minima of $\mathcal{O}_K$, corresponding to 
vectors $\alpha_0 = 1, \alpha_1, \dots, \alpha_{d - 1} \in \mathcal{O}_K$. Note that all roots $\alpha$
of polynomials counted by $S(Y; S_d)$ are bounded rational multiples of algebraic integers with $|\alpha| \ll Y$.

If $\lambda_{d - 1} \ll Y$, then an integral basis for $\calO_K$ fits inside a box of side length $O(Y)$,
so that $M_K(Y) \ll Y^d \mathrm{Disc}(K)^{-1/2}$. 
Otherwise, let $k < d - 1$ be the largest integer for which $\lambda_k \leq Y$.
Then
\begin{equation}\label{eq:gon}
M_K(Y) \ll \frac{Y^{k + 1}}{\lambda_1 \lambda_2 \cdots \lambda_k}
 \ll \frac{Y^{k + 1}}{\mathrm{Disc}(K)^{1/2}} \cdot \lambda_{k+1}\dots\lambda_{d-1},
 \end{equation}
since $\lambda_1 \dots \lambda_{d-1} \asymp \mathrm{Disc}(K)^{1/2}$.  If $k \leq \frac{d}{2} - 1 $, then $M_K(Y) \ll Y^{\frac{d}{2}}$ by the first bound above. Otherwise, by 
\cite[Theorem 3.1]{BSTTTZ}, we have $Y < \lambda_{d-1} \ll \mathrm{Disc}(K)^{1/d}$, so that
\begin{align*}
M_K(Y) 
	\ll & \ \frac{Y^{k + 1}}{\mathrm{Disc}(K)^{1/2}} \mathrm{Disc}(K)^{\frac{d -k - 1}{d}} \\
	= & \ \mathrm{Disc}(K)^{\frac{1}{2}} \big( Y /\mathrm{Disc}(K)^{\frac{1}{d}} \big)^{k + 1} \\
	\ll & \   \mathrm{Disc}(K)^{\frac{1}{2}} \big( Y/ \mathrm{Disc}(K)^{\frac{1}{d}} \big)^{\frac{d}{2}} \\
	= & \ Y^{\frac{d}{2}}.
\end{align*}
\end{proof}

Finally, we require bounds on the number of $S_d$-fields of bounded discriminant.

\begin{proposition}\label{prop:ev}\cite{schmidt, EV} We have
\[
\#\mathcal{F}_d(X) \ll X^{\alpha(d)},
\]
where we may take 
\begin{equation}\label{eq:nf_bound}
\alpha(d) =
\begin{cases}
	\frac{d + 2}{4} & \textnormal{for any $d \geq 3$, and} \\
	\frac{d}{4} - \frac{3}{4} + \frac{1}{2d}     & \textnormal{for any $d \geq 16052$}.
\end{cases}
\end{equation}

\end{proposition}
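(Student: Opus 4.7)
My plan is to handle the two bounds separately: the first is due to Schmidt and the second to Ellenberg--Venkatesh. Both rest on the observation that an upper bound for $\#\mathcal{F}_d(X)$ follows from an upper bound on the number of pairs $(K, \alpha)$ with $K \in \mathcal{F}_d(X)$ and $\alpha \in \mathcal{O}_K$ a generator of $K/\Q$ of bounded Mahler measure, since each field contributes at least one such pair and since each generator is determined by its minimal polynomial.

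For the Schmidt bound $\alpha(d) = (d+2)/4$, I would argue as follows. For each $K \in \mathcal{F}_d(X)$, embed $\mathcal{O}_K$ in $\R^d$ via the Minkowski embedding, so that $\mathcal{O}_K$ is a lattice of covolume $\asymp \mathrm{Disc}(K)^{1/2}$ containing $\Z$ as a rank-one sublattice of covolume $\sqrt{d}$. Applying Minkowski's theorem to the quotient lattice $\mathcal{O}_K / \Z$, which has rank $d-1$ and covolume $\asymp \mathrm{Disc}(K)^{1/2}$, produces a nonzero vector of length $\ll \mathrm{Disc}(K)^{1/(2(d-1))}$. Lifting gives $\alpha \in \mathcal{O}_K \setminus \Z$ whose conjugates satisfy $\max_i |\sigma_i(\alpha)| \ll X^{1/(2(d-1))}$; since $K \in \mathcal{F}_d(X)$ has no proper subfields, any such $\alpha$ generates $K$. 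Translating $\alpha$ by a rational integer, I may further assume its trace $c_1$ lies in a bounded interval. The minimal polynomial $x^d + c_1 x^{d-1} + \cdots + c_d$ then has $|c_i| \ll X^{i/(2(d-1))}$ for $i \geq 2$, and $c_1$ is restricted to $O(1)$ values. The total number of such polynomials is therefore
\[
\ll \prod_{i=2}^{d} X^{i/(2(d-1))} = X^{(d(d+1)/2 - 1)/(2(d-1))} = X^{(d+2)/4},
\]
giving the desired bound.

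For the improved bound $\alpha(d) = d/4 - 3/4 + 1/(2d)$ valid for $d \geq 16052$, the strategy follows Ellenberg and Venkatesh \cite{EV}. Rather than exploiting a single short vector, one considers the full sequence of successive minima $\lambda_1 \leq \cdots \leq \lambda_{d-1}$ of $\mathcal{O}_K/\Z$, which satisfies $\prod_i \lambda_i \asymp \mathrm{Disc}(K)^{1/2}$ together with the individual bound $\lambda_{d-1} \ll \mathrm{Disc}(K)^{1/d}$ cited from \cite{BSTTTZ} in the proof of Proposition \ref{prop:shape}. One splits fields according to the shape of this sequence: when the minima are balanced, the bound $\lambda_{d-1} \ll \mathrm{Disc}(K)^{1/d}$ yields stronger control on the coefficients of the minimal polynomial than the uniform bound used by Schmidt; when they are unbalanced, one must instead count generators $\alpha$ lying in a sublattice spanned by the short vectors, and argue as in Proposition \ref{prop:shape} that this sublattice cannot overproduce generators.

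The main obstacle is precisely this bookkeeping: one must combine the count of minimal polynomials with a multiplicity bound accounting for the many generators a single field admits when the successive minima are unbalanced, and optimize the resulting dichotomy. The threshold $d \geq 16052$ arises from explicating the numerics behind this optimization and ensuring the improvement genuinely beats $(d+2)/4$; since the argument is carried out in detail in \cite{EV} (refined from the original treatment in \cite{schmidt}), I would quote their result directly rather than reproduce the optimization.
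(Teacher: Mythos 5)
Your derivation of the Schmidt bound is correct, and in fact more detailed than the paper (which simply cites \cite{schmidt}). The computation $\prod_{i=2}^d X^{i/(2(d-1))} = X^{(d+2)/4}$ checks out, and your shortcut of invoking primitivity of $K$ (so that any $\alpha \in \mathcal{O}_K \setminus \Z$ generates $K$) legitimately sidesteps the extra case analysis in Schmidt's original argument, which must handle imprimitive fields.

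The Ellenberg--Venkatesh half, however, has a genuine gap. First, your intuitive account of their method---a dichotomy between balanced and unbalanced successive minima of $\mathcal{O}_K/\Z$, with a multiplicity argument in the unbalanced case---does not describe what \cite{EV} actually do; it is much closer to the argument of Proposition \ref{prop:shape} in this paper, which bounds $M_K(Y)$ and is an entirely separate ingredient. The EV improvement instead comes from taking several short vectors $\alpha_1,\dots,\alpha_r$ and considering all monomials of degree at most $k$ in them, leading to a family of valid exponents $\alpha(d) = \frac{4k}{d-2}\binom{r+4k}{r}$ for any $(r,k)$ satisfying $\binom{r+k}{r} > d/2$; there is no dichotomy. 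Second, and more substantively, the exponent $\alpha(d) = \frac{d}{4} - \frac{3}{4} + \frac{1}{2d}$ with threshold $d \geq 16052$ is \emph{not} a statement you can quote directly from \cite{EV}. It is derived in this paper by specializing the EV formula to $r=2$, choosing $k \approx \sqrt{d}$ to verify the bound analytically for $d > 129^2$, and then running a finite computer check to push the threshold down to $16052$. Omitting this step means you have proved neither the specific shape of the second exponent nor the range of $d$ for which it holds, and those are precisely the pieces of the proposition that go beyond a bare citation.
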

\begin{proof}
The first bound is due to Schmidt \cite{schmidt}. In \cite[(2.6)]{EV}, 
Ellenberg and Venkatesh prove for any $d$
that for any positive integers $r$ and $k$ satisfying
\begin{equation}\label{eqn:ev-constraint}
{r + k \choose r} > \frac{d}{2}
\end{equation}
one may take
\begin{equation}\label{eq:EV_details}
\alpha(d) = \frac{4k}{d - 2} \cdot {r + 4k \choose r }.
\end{equation}
One immediately checks that the choice $r = 2$, $k = \lceil \sqrt{d} - 1 \rceil$
satisfies \eqref{eqn:ev-constraint} for $d > 129^2 = 16641$ and that \eqref{eq:EV_details} is stronger than \eqref{eq:nf_bound}.  By computer one further checks that for $d \geq 16052$, there is some $k$
satisfying \eqref{eqn:ev-constraint} with $r=2$ for which \eqref{eq:EV_details} yields \eqref{eq:nf_bound}.
\end{proof}

The above bounds are far from sharp, but the second bound on $\alpha(d)$ in \eqref{eq:nf_bound} is enough
in our proof. We expect that improvements to  \cite[(2.6)]{EV}, and hence to the range $d \geq 16052$,
should be possible.

\subsection{Assembling the ingredients}
Write $N_{E,d}(X)$ for the number of degree $d$, $S_d$-number fields $K$ with $|\mathrm{Disc}(K)| < X$ that are cut out by a $\overline{\mathbb{Q}}$-point of $E$.

We put the preceding steps together as follows:
\begin{itemize}
\item
The number of choices for the $a_i$ and $b_j$ is $\asymp Y^c$, where for $d$ even we compute that
\[
c = \sum_{i = 1}^{d/2} i + \sum_{j = 2}^{d/2} \left( j - \frac{3}{2} \right)
= \frac{d^2}{4} - \frac{d}{4} + \frac12,
\]
and a similar computation with $d$ odd yields the same result.

\item
By Hilbert irreducibility (Theorem \ref{thm:hit}) and Lemma \ref{lem:bound_mult}, we therefore obtain $\asymp Y^c$ different $\alpha$
as roots of polynomials $H(x)$ which generate $S_d$ fields, and for which $(\alpha, \frac{F(\alpha)}{G(\alpha)})$ is a point on $E(\overline{\mathbb{Q}})$.  
Since these polynomials have bounded denominators, the discriminant of each of these polynomials, and thus of the fields themselves,
is $\ll Y^{d^2 - d}$. Write $$X := C_1 Y^{d^2 - d}$$ for a bound on these discriminants, where $C_1$ is a constant depending only
on $f$ and $d$.
\item
Following the strategy in (3.2) of \cite{EV}, by
Proposition \ref{prop:shape}  we therefore have
\begin{equation}\label{eq:EV_strategy}
\sum_{|\Disc(K)| \leq X} M_K(Y)
\gg Y^c,
\end{equation}
where the sum is over the fields $\Q(\alpha)$ generated by the $\alpha$
as described above, which is a subset of the fields counted by $N_{E,d}(X)$.
\end{itemize}

We are now ready to finish. We first use Propositions \ref{prop:shape} and \ref{prop:ev} to bound the contribution to \eqref{eq:EV_strategy} from fields of small discriminant.
With $\alpha(d) = \frac{d+2}{4}$ in \eqref{eq:nf_bound}, we have for $T \leq Y^d$ that
\begin{equation}\label{eq:ld_schmidt}
\sum_{\mathrm{Disc}(K) \leq T} M_K(Y) \ll \sum_{\mathrm{Disc}(K)\leq T} \frac{Y^{d}}{\mathrm{Disc}(K)^{1/2}} \ll Y^d T^{\frac{d+2}{4}-\frac{1}{2}} = Y^d T^{d/4},
\end{equation}
which is $o(Y^c)$ with the choice $T = Y^{d - 5 + \frac{2}{d} - \epsilon}$. We thus have from \eqref{eq:EV_strategy} that
\begin{equation}\label{eq:EV_strategy_2}
\sum_{T < |\Disc(K)| \leq X} M_K(Y)
\gg Y^c.
\end{equation}
By Proposition \ref{prop:shape}, $M_K(Y) \ll Y^d/T^{1/2}$ for each $K$ in the sum, and a bit of algebra shows that
\[
N_{E,d}(X) 
\gg Y^c \big( Y^d/T^{1/2} \big)^{-1} \gg X^{\gamma - \epsilon}
\]
with 
\begin{equation}\label{eq:gamma_schmidt}
\gamma 
	= \frac{c - d + \frac{1}{2}(d-5+2/d)}{d^2-d}
	= \frac{1}{4} - \frac{d^2+4d-2}{2d^2(d-1)}.
\end{equation}
This yields the stated value of $c_d$ in Theorem \ref{thm:large-degree} and Theorem \ref{thm:general}.

\medskip
If we instead assume the slightly better bound $\alpha(d) = \frac{d}{4} - \frac{3}{4} + \frac{1}{2d}$ as in the hypotheses of Theorem \ref{thm:field-improvement}, then
we find that the contribution from those fields $K$ with $\mathrm{Disc}(K) \leq T$ is $o(Y^c)$ for any $T \ll Y^{d-\epsilon}$. In \eqref{eq:EV_strategy_2} we now
have $M_K(Y) \ll Y^{d/2+\epsilon}$ for each $K$, yielding 
$N_d(X) \gg X^{\gamma - \epsilon}$ with
\begin{equation}\label{eq:gamma_linnik}
\gamma = \frac{c - \frac{d}{2}}{d^2 - d} 
=\frac{1}{4} - \frac{1}{2d}.
\end{equation}

Combined with Lemma \ref{lem:fractional-points}, this yields Theorem \ref{thm:large-degree} apart from the claim about the root number $w(E,\rho_K)$. To control 
the root number, we use Lemma \ref{lem:control_sign}.
By Corollary \ref{cor:sign}, there is some fixed power $M$ of the conductor $N_E$ such that if $F(x)$ and $G(x)$ lie in fixed congruence classes $\pmod{M}$, then the root number $w(E,\rho_K)$ depends only on the sign of the discriminant of $H(x)$.  Therefore, Lemma \ref{lem:control_sign} implies, for each $\varepsilon = \pm 1$, that a positive proportion of the fields $K$ counted by $N_{E,d}(X)$ have $w(E,\rho_K) = \varepsilon$.  This is Theorem \ref{thm:large-degree}.

\subsection{Limitations and conditional improvements}

Let $M_{E,K}(Y)$ be the multiplicity with which a given field $K$ arises from the construction \eqref{eq:def_H}. If we had the bound
$M_{E, K}(Y) \ll Y^\epsilon$, then this would yield Theorem \ref{thm:general} with 
\begin{equation}\label{eqn:cd-limit}
c_d = \frac{c}{d^2-d} = \frac{1}{4}+ \frac{1}{2(d^2-d)},
\end{equation}
the limitation of our method at present.  We do not know how to establish this bound on $M_{E,K}(Y)$ unconditionally, even on average over $K$, but we can show
that this follows from well known open conjectures.

One feature we have heretofore ignored is that the points $(\alpha, \frac{F(\alpha)}{H(\alpha)})$ are integral away from a single rational prime. 
We can bound the number of such points using a special case of a result of Helfgott and Venkatesh \cite[Theorem 3.8]{HelfgottVenkatesh}. 
Let $S$ be a finite set of places of $\mathbb{Q}$. Then, for each degree $d$ field $K$, the number of $K$-rational points on $E$
with canonical height at most $h$ and which are integral at all places not lying over $S$, is
\begin{equation}\label{eqn:helfvenk}
O_{S,f,d}\left((1+\log h)^2 e^{.28 \cdot \mathrm{rk}(E(K))}\right),
\end{equation}
where the implied constant depends on $d$, $S$, and the Weierstrass equation $E \colon y^2 = f(x)$.
In our case, the points $(\alpha, \frac{F(\alpha)}{G(\alpha)})$ have canonical height $\ll \log Y$.  Thus, \eqref{eqn:helfvenk} implies that 
$$
M_{E,K}(Y) \ll Y^{\epsilon + 0.28 \frac{\mathrm{rk}(E(K))}{\log Y}}.
$$  
We expect the rank of $E(K)$ to be $o(\log D_K) = o(\log Y)$ for every $K$, from which we would obtain $M_{E,K}(Y) \ll Y^\epsilon$.
This would yield Theorem \ref{thm:general} with $c_d$ as in \eqref{eqn:cd-limit}.  

\smallskip
Unfortunately, this pointwise bound on the rank appears to be out of reach of algebraic methods. However, we may deduce it from the conjectural bound
$\#\mathrm{Cl}(K(E[2]))[2] \ll \mathrm{Disc}(K(E[2]))^\epsilon$ for each $K\in \mathcal{F}_d(X)$.  In particular, the rank of $E(K)$ is bounded by that of the $2$-Selmer group $\mathrm{Sel}_2(E_K)$.
By a classical $2$-descent \cite[Proposition X.1.4]{Silverman} we have in turn that $|\mathrm{Sel}_2(E_K)| \ll |\mathrm{Cl}(K(E[2]))[2]|^2$.
The field $K(E[2])$ is at most a degree $6$ extension of $K$ and is unramified away from $2\Delta_E$,  so its discriminant satisfies $\mathrm{Disc}(K(E[2])) \ll \mathrm{Disc}(K)^6$. We therefore
have the chain of inequalities 
\[
\mathrm{rk}(E(K)) 
	\leq \mathrm{rk}(\mathrm{Sel}_2(E_K)) 
	\ll_{d,f} \log |\mathrm{Cl}(K(E[2]))[2]|
	\ll_! \epsilon \log (\mathrm{Disc}(K))
	\ll_d \epsilon \log Y,
\]
where only the inequality marked $\ll_!$ is conjectural.  Combined with \eqref{eqn:helfvenk}, this would yield $M_{E,K}(Y) \ll Y^\epsilon$, and thereby that \eqref{eqn:cd-limit} is admissible in Theorem \ref{thm:general}.


Alternatively, if we assume that the $L$-function $L(s,E,\rho_K)$ is entire, then the Birch and Swinnerton-Dyer conjecture provides an analytic way of accessing the rank of $E(K)$.  Unfortunately, here too we run into an obstacle, with unconditional methods only being able to show that the analytic rank is $O(\log N_E^{d-1}D_K^2) = O(\log Y)$.  However, if we are willing to assume that $L(s,E,\rho_K)$ satisfies the generalized Riemann hypothesis, then from \cite[Proposition 5.21]{IwaniecKowalski}, we obtain the slight improvement
\[
\mathrm{ord}_{s=1/2}L(s,E,\rho_K) \ll \frac{\log Y}{\log\log Y},
\]
which is sufficient to conclude that $M_{E,K}(Y) \ll Y^\epsilon$.

The outcome of this discussion is the following proposition.

\begin{proposition}
Let $E/\mathbb{Q}$ be an elliptic curve and let $K \in \mathcal{F}_d(X)$.  Suppose that either $L(s,E,\rho_K)$ is entire and satisfies both the Birch and Swinnerton-Dyer conjecture and the generalized Riemann hypothesis, or that $\#\mathrm{Cl}(K(E[2]))[2] \ll D_K^\epsilon$.  Then $M_{E,K}(Y) \ll Y^\epsilon$.

In particular, if either holds for all $K \in \mathcal{F}_d(X)$, then Theorem \ref{thm:general} holds with 
\[
c_d = \frac{1}{4}+ \frac{1}{2(d^2-d)}.
\]
\end{proposition}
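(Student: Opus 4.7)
The plan is to prove the multiplicity bound $M_{E,K}(Y) \ll Y^\epsilon$ via a height counting argument, and then to feed this bound into the counting at the end of Section \ref{sec:large-degree}. Every polynomial $H = F^2 - fG^2$ contributing to $M_{E,K}(Y)$ records an affine $K$-rational point $(\alpha, F(\alpha)/G(\alpha))$ on the model $E\colon y^2 = f(x)$; this point is integral away from the single prime $p_1$ of Lemma \ref{lem:exists_w} and its coordinates have naive height $\ll_{E,d} Y^{O(1)}$, hence canonical height $\ll \log Y$. The Helfgott--Venkatesh bound \eqref{eqn:helfvenk}, applied with $S = \{p_1, \infty\}$, then gives
\[
M_{E,K}(Y) \ll_{E,d} (\log Y)^2 \exp\bigl(0.28\cdot \mathrm{rk}(E(K))\bigr),
\]
so everything reduces to showing that $\mathrm{rk}(E(K)) = o(\log Y)$ under either hypothesis.

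Under the class group hypothesis, I would chain together the standard inequalities already sketched in the excerpt: a classical $2$-descent gives $\mathrm{rk}(E(K)) \leq \dim_{\mathbb{F}_2}\mathrm{Sel}_2(E_K) \ll \log \#\mathrm{Cl}(K(E[2]))[2]$, and since $K(E[2])/K$ has degree at most $6$ and is unramified outside $2\Delta_E$, we have $\mathrm{Disc}(K(E[2])) \ll D_K^6$. The assumption $\#\mathrm{Cl}(K(E[2]))[2] \ll D_K^\epsilon$ therefore yields $\mathrm{rk}(E(K)) \ll \epsilon \log D_K \ll \epsilon \log Y$, whence $M_{E,K}(Y) \ll Y^\epsilon$. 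Under the analytic hypothesis, entirety plus GRH for $L(s,E,\rho_K)$ allows one to invoke \cite[Proposition 5.21]{IwaniecKowalski} to conclude $\mathrm{ord}_{s=1/2}L(s,E,\rho_K) \ll \log Y/\log\log Y$; BSD identifies this quantity with $\mathrm{rk}(E(K)) - \mathrm{rk}(E(\mathbb{Q}))$, and again yields $M_{E,K}(Y) \ll Y^\epsilon$.

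With this improved multiplicity bound in hand, I would redo the counting of Section \ref{sec:large-degree}. The construction \eqref{eq:def_H} produces $\asymp Y^c$ pairs $(F,G)$ with $c = d^2/4 - d/4 + 1/2$; by Hilbert irreducibility combined with Proposition \ref{prop:Sd}, all but $o(Y^c)$ of the resulting polynomials $H$ cut out $S_d$-fields, and by Lemma \ref{lem:control_sign} a positive proportion do so while simultaneously lying in any prescribed congruence class modulo a fixed power of $N_E$ and taking any prescribed sign of $\mathrm{Disc}(H)$, which via Corollary \ref{cor:sign} controls $w(E,\rho_K)$. Lemma \ref{lem:bound_mult} shows each polynomial $H$ arises from $O_d(1)$ pairs $(F,G)$, and the new bound shows each field $K$ arises from at most $Y^\epsilon$ such polynomials. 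The number of distinct $S_d$-fields with prescribed root number is therefore $\gg Y^{c - \epsilon}$, and since $\mathrm{Disc}(K) \ll Y^{d^2 - d}$, this becomes $\gg X^{c_d - \epsilon}$ with
\[
c_d = \frac{c}{d^2 - d} = \frac{1}{4} + \frac{1}{2(d^2-d)},
\]
as asserted. The main point requiring care is verifying that the Helfgott--Venkatesh bound applies uniformly across the family --- that the set $S$ and the implied constants can be chosen independently of $K$ --- but this is automatic since $p_1$ depends only on $E$ and $d$. The whole improvement is powered by a single $Y^\epsilon$'s worth of savings in the multiplicity estimate, so the uniformity deserves careful bookkeeping even though it presents no genuine obstacle.
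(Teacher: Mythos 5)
Your proposal follows the paper's own argument essentially verbatim: you invoke the Helfgott--Venkatesh $S$-integral point bound with $S = \{p_1,\infty\}$, note the points have canonical height $\ll \log Y$, and reduce to $\mathrm{rk}(E(K)) = o(\log Y)$, which you establish via the same chain of inequalities through the $2$-Selmer group and $\mathrm{Cl}(K(E[2]))[2]$ in the algebraic case and via \cite[Proposition 5.21]{IwaniecKowalski} in the analytic case; the final counting step dividing $Y^c$ by $Y^\epsilon$ to get $c_d = c/(d^2-d) = 1/4 + 1/(2(d^2-d))$ is likewise identical. This is correct and takes the same approach as the paper.
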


\bibliographystyle{alpha}
\bibliography{nonabeliantwists}

\end{document}